\documentclass[a4paper, 12pt]{article} 

\usepackage[left=3cm,right=2cm,top=2.5cm,bottom=2.5cm]{geometry}
\usepackage{lmodern}
 
\usepackage[T1]{fontenc}
\usepackage[utf8]{inputenc}

\usepackage{verbatim}

\usepackage{mathtools,amsmath,amsfonts,amssymb,amsthm,mathrsfs,stackrel}

\usepackage{tikz}
\usetikzlibrary{shapes,arrows,calc,positioning,patterns}

\usepackage{xargs,pgfplots}

\newcommandx{\change}[2][1=]{\todo[linecolor=blue,backgroundcolor=blue!25,bordercolor=blue,#1]{#2}}
\definecolor{p}{RGB}{2, 31, 146}
\definecolor{n}{RGB}{146, 2, 2}
\pgfplotsset{compat=1.17}

\usepackage[nottoc]{tocbibind}
\usepackage{hyperref}
\hypersetup{
    colorlinks=true,
    linkcolor=blue,
    filecolor=blue,
    urlcolor=blue,
    }
\usepackage{fancyhdr} 


\theoremstyle{definition}
\newtheorem{Rmk}{Remark}

\theoremstyle{plain}
\newtheorem{Theo}{Theorem}

\newtheorem{Lem}{Lemma}
\newtheorem{Cor}{Corollary}
\newtheorem{Prop}{Proposition}
\newtheorem*{Theo*}{Theorem}
\newtheorem*{Lem*}{Lemma}
\newtheorem*{Cor*}{Corollary}
\newtheorem*{Prop*}{Proposition}

\providecommand{\customgenericname}{}
\newcommand{\newcustomtheorem}[2]{%
  \newenvironment{#1}[1]
  {%
   \renewcommand\customgenericname{#2}%
   \renewcommand\theinnercustomgeneric{##1}%
   \innercustomgeneric
  }
  {\endinnercustomgeneric}
}
\newcustomtheorem{customthm}{Theorem}
\newcustomtheorem{customlemma}{Lemma}

\newcommand{\ind}{\mathbf{1}}
\newcommand{\Var}[1]{\mathrm{Var}\left(#1\right)}

\newcommand{\Ex}[1]{\mathbb{E}\left(#1\right)}
\renewcommand{\P}[1]{\mathbb{P}\left(#1\right)}
\newcommand{\Pa}[1]{\mathbb{P}_{\alpha}\left(#1\right)}
\newcommand{\Z}{{\mathbb{Z}}}
\newcommand{\N}{{\mathbb{N}}}
\newcommand{\R}{\mathbb{R}}
\newcommand{\T}{\mathbb{T}}
\newcommand{\G}{\mathrm{Gumbel}}
\newcommand{\e}{\varepsilon}


\begin{document}

\title{Covering Distributions}
\author{Alberto M. Campos\footnote{IMPA - Estrada Dona Castorina 110,
22460-320 Rio de Janeiro, RJ - Brazil }\; and  Augusto Teixeira \footnote{IMPA - Estrada Dona Castorina 110,
22460-320 Rio de Janeiro, RJ - Brazil and IST - University of Lisbon,
Portugal.}}
\date{\today}
\maketitle


\begin{abstract}
In this article, we study a covering process of the discrete one-dimensional torus that uses connected arcs of random sizes in the covering. More precisely, fix a distribution $\mu$ on $\mathbb{N}$, and for every $n\geq 1$ we will cover the torus $\mathbb{Z}/n\mathbb{Z}$ as follows: at each time step, we place an arc with a length distributed as $\mu$ and a uniform starting point. Eventually, the space will be covered entirely by these arcs. Changing the arc length distribution $\mu$ can potentially change the limiting behavior of the covering time. Here, we expose four distinct phases for the fluctuations of the cover time in the limit. These phases can be informally described as the Gumbel phase, the compactly support phase, the pre-exponential phase, and the exponential phase.  Furthermore, we expose a continuous-time cover process that works as a limit distribution within the compactly support phase. 
\end{abstract}

{\footnotesize Keywords: Coupon Collector ; Covering Processes;}
\section{Introduction and motivation}
\label{sec:Introduction}\noindent

Consider a scenario in which objects of random size fall randomly into some space and, over time, as more objects enter the picture, the space will eventually be covered. This general description shares similarities with several other problems in the literature, being a generalization of the coupon collector problem \cite{ER,BP,DB}, the committee problem \cite{MS}, the leaves-covering problem \cite{MDP}, the set-covering problems \cite{MFBBBRWK, Sh, ADiaconis}, or the matrix occupancy problem \cite{LH}. Each of these works cited above gives a different perspective to the covering process. However, there are still many open problems to be understood.    

This paper focuses on a specific type of covering process that allows us to observe different phases for the fluctuations of the cover time. Fix the space $ \mathbb{Z}/n\mathbb{Z}$ to be a discrete one-dimensional torus of size $n$ represented by the points $\{0,...,n-1\}$, and consider the set of connected arcs in the torus. Informally speaking, the process can be described as a random stack of arcs, where the arcs are chosen to start at random points $U$ and have random lengths $R$. 

To describe it rigorously, pick two sequences of i.i.d. random variables. The first sequence is going to represent the \textbf{radii} $(R_k)_k$, where
\begin{align}\label{eq:Rdistribuição}
    \P{R_1\geq 1}=1 \text{ and }
    \P{R_1\geq r}\coloneqq&\,f(r) \text{ for } r\geq 1.
\end{align} The second sequence of random variables will be the \textbf{positions} $(U_k)_k$, where $U_k$ is distributed as a uniform random variable in the discrete torus, i.e. $U_i\sim \mathrm{Unif}(\mathbb{Z}/n\mathbb{Z})$. With these two sequences, define the set $\{\Gamma_m=[0,m) :m\in \N\}$ and denote the $k-th$ object as $\mathcal{O}_k=\{U_k+\Gamma_{R_k}\}=\{U_k,U_k+1,...,U_k+R_k-1\}\subset \mathbb{Z}/n\mathbb{Z}$, a interval that starts in $U_k$ and has total length $R_k$. Finally, define the \textbf{discrete covering process} as the sequence $({C}_k)_k$, where ${C}_0=\emptyset$ and inductively ${C}_k={C}_{k-1}\cup \mathcal{O}_k$.

With the covering process $({C}_k)_k$ defined, consider the \textbf{cover time} of the space $\mathbb{Z}/n\mathbb{Z}$ 
\begin{align*}
    \tau_n=\min\{ k : {C}_k=\mathbb{Z}/n\mathbb{Z}\}.
\end{align*} 
The main focus of the current article is to investigate how the distribution of the radius $R$ can influence the scaling and limiting distribution of $\tau_n$. 

The process translates easily when working in continuous time, resulting in simpler proofs and calculations. In order to define it, let $\{N(t):\, t\geq 0\}$ be a Poisson process with rate $1$ on the real line and fix a discrete covering process $({C}_k)_k$ with radius distribution $f(r)=\P{R\geq r}$. Then, define the \textbf{continuous time covering process} $(X_t)_{t>0}$ as
\begin{align}\label{eq:ContdefX}
    X_t={C}_{N(t)}.
\end{align}Define also the \textbf{continuous cover time} as
\begin{align*}
    T_n=\inf\{t: X_t=\mathbb{Z}/n\mathbb{Z}\}.
\end{align*} \bigskip

Theorems \ref{teo:1}, \ref{teo:2}, \ref{teo:3}, and \ref{teo:4}, stated below, describe different phases of the covering process. To illustrate thes, see Figure~\ref{fig:1}: 

\begin{figure}[ht!]
    \centering
    \includegraphics{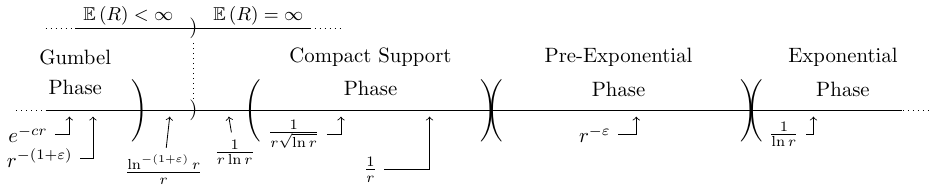}
    \caption{A representation of the Theorems \ref{teo:1} to \ref{teo:4} in a line, with the radius distributions $f(r)=\P{R\geq r}$ disposed in a monotone way. 
    Here $\e>0$ is a positive small number, and $c>0$ is any constant.}
    \label{fig:1}
\end{figure}

The covering process described above is a flexible model, where similar results using different notations appear scattered in the literature. But as far as we can tell, the theorems presented here are new.

Each theorem stated here presents a novelty in relation to the existing literature. To contextualizeand structure the statements, the references are presented between each. 

\begin{customthm}{A}[Gumbel Phase]\label{teo:1}
  Assume that $\Ex{R^{1+\e}}<\infty$ for some $\e>0$, and set $\mu=\Ex{R}$. Then, as $n$ goes to infinity:
\begin{align} \label{eq:DiscreteGumbelResult}
    \frac{\mu}{n}T_n - \ln{n}\overset{D}{\implies}\G(0,1),
\end{align} where $\mathbb{P}\left(\G(0,1)<t\right)=\exp\{-\exp\{-t\}\}$  is the Gumbel distribution with parameters $0$ and $1$.   
\end{customthm}

 Theorem \ref{teo:1} deals with the problem where the radius $R$ has light tails. Due to the number of available techniques, it is not surprising that this type of covering is the most explored in the literature. For example \cite{FLM} establishes the first-order behavior of the cover time, while \cite{FLM,BT,XT} show convergence in distribution when $R$ is constant. 
 \begin{Rmk}\label{Rmk:SJ}
 The article that most resembles in results with Theorem \ref{teo:1} is \cite{SJ}, where the author considers $\widehat{\tau_n}$, the number of objects needed to cover a continuous circle $\mathbb{S}^1$ using random arcs that start at uniform points and have lengths $R/n$, where $\Ex{R^{1+\e}}<\infty$ for some $\e>0$. This covering problem has the following scaling behavior: 
\begin{align*}
    \frac{\mu}{n}\widehat{\tau_n} -\ln{\left(\frac{n}{\mu}\right)}-\ln\left(\ln{\left(\frac{n}{\mu}\right)}\right)\overset{D}{\implies} \G(0,1). 
\end{align*} The above result has a different normalization scale then Theorem \ref{teo:1}. Intuitively speaking, the cover time in the continuum is larger because covering the discrete points of the form $\left\{\frac{i}{n} : i\in \{0,1,\cdots,n-1\}\right\}$ does not imply that points in between are covered too. Also, it is important to mention that the techniques presented in \cite{SJ} are different from those discussed in here. 
\end{Rmk}\bigskip

A new phenomenon emerges when the radius stops having a first moment; not only does the cover time grow on a different scale, but its limiting distribution displays a compact support.

\begin{customthm}{B}[Compact Support Phase]\label{teo:2}
  Let $b>-1$, and assume that $f(r)= \min\{\frac{\ln^{b}(r)}{r},1\}$. Then, $(f(n)\tau_n)_n$ is tight. Moreover, for every subsequence $(n_k)_k$ such that
  \begin{align} \label{eq:DiscretecompactResult}
    f(n_{k})T_{n_{k}}\overset{D}{\implies} Y,
  \end{align}  we conclude that $Y=Y(f,(n_k)_k)$ is a non-degenerate distribution with compact support.
\end{customthm}

We can generalize the result of Theorem \ref{teo:2} for other choices of functions $f(r)$. Due to some technicalities, we postponed the general statement for the compact support phase to Section \ref{sec:compactphase}.

The particular case $(b=0)$ features some self-similarity that reassembles a previously studied covering process. To define it informally, consider $\mathbb{S}^1$ a circle with a canonical representation $[0,1)$ where $1$ is identified to $0$; and let $(\mathcal{C}_t)_{t>0}$ be a set of arcs on $\mathbb{S}^1$, with initial points $x$ and radii $y$ coming from a Poisson point process with rate $d\Lambda=\alpha \left(dx\otimes \frac{dy}{y^2}\right)$; for more details, see a complete description of the model in Section~\ref{sec:cylinder}. Defined by Mandelbrot and Shepp in \cite{BBM,Sh}, the covering process $(\mathcal{C}_{t})_{t>0}$ has been studied in \cite{KA,JB}. The novelty of Theorem \ref{teo:2*} is related to the fact that the discrete process converges (in a sense) to its continuous counterpart $(\mathcal{C}_t)_{t>0}$.


\begin{customthm}{B*}
  \label{teo:2*}
  Letting $f(r)=\frac{1}{r}$, then:
  \begin{align} \label{eq:limitdiscrete}
    \frac{T_{n}}{n}\overset{D}{\implies} Y,
  \end{align} where $Y$ is a non trivial distribution with $\mathrm{supp}\{Y\}=[0,1]$, that satisfies for $\alpha\in(0,1)$:
  \begin{align*}
    \P{\mathcal{C}_{\alpha}=\mathbb{S}^1}&=\P{Y\leq \alpha}, \text{ and }\\
    \frac{\log |\T_n\setminus X_{\alpha n}|}{\log n}\ind\{X_{\alpha n}\neq \T_n\}&\overset{D}{\implies} (1-\alpha)\ind\{\mathcal{C}_{\alpha}\neq \mathbb{S}^1\}.
  \end{align*} 
\end{customthm}

In this paper, we define the \textbf{support of a distribution} $Y$ to be the set of all points $y\in \R$ such that for every neighborhood $U$ of $y$, we get $\P{Y\in U}>0$. 

Given any finite number of points in $\mathbb{S}^1$, the probability of covering these points in the discrete case converges to one. However, the cover time is not a random variable that depends only on a finite set of points, so there is a tightness issue that is central to Theorem \ref{teo:2*}. Showing that the discrete cover time converges to its continuous analogue is not trivial, as we need to control the random variables beyond just a local perspective. Remark \ref{Rmk:SJ} serves as an cautionary tail that the discrete and the continuous processes may feature different behaviors.
\bigskip

To state the next result, following the notation in \cite{SIR}, we say that
 a measurable function $U:\R_+\to \R_+$ is regularly varying at infinity with index $p\in\R$ if for all $t>0$, we have: 
\begin{align}\label{eq:RV}
    \lim_{x\to \infty}\frac{U(xt)}{U(x)}=t^{p}.
\end{align} Let $\mathrm{RV}_p$ be the space of all \textbf{regularly varying functions at infinity with index $p$}, and when equation \eqref{eq:RV} holds, denote $U\in \mathrm{RV}_p$. In particular, when $p=0$, denote $\mathrm{RV}_0$ as the space of \textbf{slowly varying functions}.

When exploring heavier tails for $R$, limiting distributions with unbounded support are once again found.

\begin{customthm}{C}[Pre-Exponential Phase]\label{teo:3}
  Take $p\in(-1,0)$, and set $f\in \mathrm{RV}_p$. Then, $(f(n)T_n)_n$ is tight. Moreover, for every subsequence $(n_k)_k$ such that
 \begin{align} \label{eq:DiscretePExpResult}
    f(n_{k})T_{n_{k}}\overset{D}{\implies} Z,
\end{align} we can conclude that  $Z=Z(f,(n_k)_k)$ is a distribution that satisfies 
 \begin{align}\label{eq:DiscretePExpZcondition}
   1 - e^{-z} < \mathbb{P}(Z \geq z) < 1, \text{ for every } z\geq 0.
 \end{align} In particular, it is not compactly supported nor exponentially distributed. 
\end{customthm}

\begin{Rmk}
     With $p\in(-1,0)$, as a standard example of a function in $\mathrm{RV}_p$, one may take $f(r)=r^{p}$.
\end{Rmk}

Note that equation \eqref{eq:DiscretePExpZcondition} rules out the possibility that $Z$ is an exponential random variable. This observation highlights the contrast with the next theorem, which focuses on the heaviest tail considered in this paper and exhibits an exponential distribution in its limit. In this case, the system waits for the arrival of a single interval that covers the whole torus. 

\begin{customthm}{D}[Exponential Phase]\label{teo:4}
  Let $f$ be a slowly varying function, then
  \begin{align} \label{eq:DiscreteExpResult}
    f(n)T_n\overset{D}{\implies} \mathrm{Exp}\{1\},
  \end{align}where $\mathbb{P}\left(\mathrm{Exp}\{1\}<t\right)=1-e^{-t}$ is the exponential distribution with parameter $1$.  
\end{customthm}
\begin{Rmk}
As examples of functions in $\mathrm{RV}_0$, consider functions such as $\frac{1}{\log{n}}$ or $\frac{1}{\log^b{n}}$ for some $b > 0$.
\end{Rmk}
\begin{Rmk}
    The results appearing in Theorems \ref{teo:1}, \ref{teo:2}, \ref{teo:3}, and \ref{teo:4}, are also valid in the discrete covering process changing only the random variable $T_n$ to $\tau_n$, see Appendix Subsection \ref{subsec:Equivalence}. 
\end{Rmk}

In this text, the proof of theorems \ref{teo:1}, \ref{teo:2}, \ref{teo:3}, and \ref{teo:4}, are presented, respectively, in sections~\ref{sec:Gumbel}, \ref{sec:compactphase}, \ref{sec:preexpphase}, and \ref{sec:expphase}.  Section~\ref{sec:cylinder} is dedicated to proving theorem \ref{teo:2*} and construct the continuous model that serves as a limit distribution.  The proofs are independent and can be read in any order.  Section~\ref{subsec:Open} is dedicated to open problems that remains unsolved.  Finally, Appendix~\ref{sec:appendix} is divided into two subsections, Subsection~\ref{subsec:Usefulprop} presents some lemmas and useful tools used in the proof of the theorems, and Subsection~\ref{subsec:Equivalence} demonstrates the theorems in the discrete time case.

\textbf{Notations:} In the text, the symbol $\N$ is used to represent the set of natural numbers $\{1, 2, ...\}$. The set $\Z$ is used to denote the integer numbers, while $\R$ is used for the real numbers. The circle is defined as $\mathbb{S}^1=\R/\Z$. Furthermore, the sets $I$ and $J$ are utilized as arbitrary index sets.

\textbf{Acknowledgment:} 
Both authors thanks  CNPQ (Conselho Nacional de Desenvolvimento Científico Tecnolôgico), IST (Instituto Superior Técnico) and IMPA (Instituto de Matemática Pura e Aplicada). Researcher  A.M.C. is supported in part by CNPq, grant 141068/2020-5.
Researcher A.T. has been supported by grants \textit{``Projeto Universal''} (406250/2016-2)
and \textit{``Produtividade em Pesquisa''} (304437/2018-2) from CNPq and \textit{``Jovem
Cientista do Nosso Estado''}, (202.716/2018) from FAPERJ. 
 

\section{Gumbel Phase}\label{sec:Gumbel}
\noindent

This Section is devoted to the proof of Theorem \ref{teo:1}. We organized it as follows: First, we expose and prove some lemmas about the behavior of the light tail radius. Then, we expose three propositions that together prove Theorem \ref{teo:1}. Finally, we present the proof of the propositions.

\begin{Lem} \label{Lem:EqvH}
Let $R$ be a discrete random variable with $\mathbb{P}(R \geq r) = f(r)$. The following statements are equivalent:
\begin{align}
    \textit{1.}&\,\quad \Ex{R^{p+1}}<\infty \text{ for some } p>1. \nonumber\\
    \textit{2.}&\,\quad \exists\lambda>0 \text{ such that } \lim_{k\to \infty} f(k)k^{1+\lambda}=0. \label{eq:ContinuousGumbelCondition2}\\
    \textit{3.}&\,\quad  \exists \lambda'>0\text{ such that }\lim_{k\to \infty} f(k)k^{1+\lambda'}\ln{k}=0. \label{eq:ContinuousGumbelCondition}
\end{align}
\end{Lem}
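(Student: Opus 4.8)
The statement asserts the equivalence of three conditions on the tail $f(r) = \mathbb{P}(R \geq r)$ of a discrete random variable: a polynomial moment bound $\mathbb{E}(R^{p+1}) < \infty$ for some $p > 1$, a power-law decay of the tail $f(k)k^{1+\lambda} \to 0$, and the same with an extra logarithmic factor. I would prove this via the cycle $(1) \Rightarrow (3) \Rightarrow (2) \Rightarrow (1)$, since $(3) \Rightarrow (2)$ is immediate (if $f(k)k^{1+\lambda'}\ln k \to 0$ then a fortiori $f(k)k^{1+\lambda'} \to 0$, so take $\lambda = \lambda'$), leaving two genuine implications.

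For $(1) \Rightarrow (3)$: the plan is to use the standard tail-sum representation of moments. Since $R \geq 1$ a.s. and is $\N$-valued, $\mathbb{E}(R^{q}) < \infty$ is equivalent to $\sum_{k\geq 1} k^{q-1} f(k) < \infty$ (by Abel summation / the layer-cake formula, up to constants depending on $q$). So $(1)$ gives $\sum_k k^{p} f(k) < \infty$ for some $p > 1$. In particular the summand $k^p f(k) \to 0$, but I need the stronger statement with the $\ln k$ factor and a strictly positive exponent $\lambda' < p - 1$. The trick: pick $\lambda' \in (0, p-1)$ and write $f(k) k^{1+\lambda'} \ln k = \big(k^p f(k)\big)\cdot \frac{\ln k}{k^{\,p - 1 - \lambda'}}$. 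The first factor is bounded (it's a term of a convergent series, hence $\to 0$), and since $p - 1 - \lambda' > 0$, the second factor $\frac{\ln k}{k^{p-1-\lambda'}} \to 0$. Hence the product tends to $0$, giving \eqref{eq:ContinuousGumbelCondition}.

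For $(2) \Rightarrow (1)$: assume $f(k) k^{1+\lambda} \to 0$ for some $\lambda > 0$. Then $f(k) \leq C k^{-(1+\lambda)}$ for all $k$ (with $C$ absorbing the finitely many initial terms and the eventual bound). Choose $p$ with $1 < p + 1 < 2 + \lambda$, e.g. $p = 1 + \lambda/2 > 1$. Then $\mathbb{E}(R^{p+1})$ is controlled by $\sum_k k^{p} f(k) \leq C \sum_k k^{p - 1 - \lambda} = C\sum_k k^{-1 - \lambda/2} < \infty$, since $p - 1 - \lambda = -\lambda/2 < -1$. This gives $(1)$.

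The only mild subtlety — not really an obstacle — is being careful with the precise constant relating $\mathbb{E}(R^q)$ to $\sum_k k^{q-1} f(k)$: one has $\mathbb{E}(R^q) = \sum_{k \geq 1}\big(k^q - (k-1)^q\big) f(k)$ and $k^q - (k-1)^q \asymp q\, k^{q-1}$, so finiteness of one sum is equivalent to finiteness of the other; no growth assumption beyond $q \geq 1$ is needed. I expect the whole argument to be short and elementary; the "hard part," such as it is, is just organizing the exponents so that the chosen $\lambda$, $\lambda'$, $p$ satisfy all the required strict inequalities simultaneously.
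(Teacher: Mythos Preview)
Your overall strategy (cycle $(1)\Rightarrow(3)\Rightarrow(2)\Rightarrow(1)$, using the tail-sum/Markov relation between moments and tail decay, and absorbing the $\ln k$ by shaving the exponent) is exactly the paper's approach, which proves $(1)\Leftrightarrow(2)$ via Markov and then $(2)\Leftrightarrow(3)$ via the chain $f(k)k^{1+\lambda/2}\le f(k)k^{1+\lambda/2}\ln k\le f(k)k^{1+\lambda}$.

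There is, however, a genuine slip in your $(2)\Rightarrow(1)$. With $p=1+\lambda/2$ and $f(k)\le Ck^{-(1+\lambda)}$ you get
\[
\sum_k k^{p}f(k)\le C\sum_k k^{p-1-\lambda}=C\sum_k k^{-\lambda/2},
\]
not $\sum_k k^{-1-\lambda/2}$; and $\sum_k k^{-\lambda/2}$ diverges for every $\lambda\le 2$. This is not fixable by a better choice of $p$: if you read condition (1) literally as $\Ex{R^{p+1}}<\infty$ for some $p>1$ (i.e.\ a moment strictly above $2$), then $(2)\Rightarrow(1)$ is simply false---take $f(k)=k^{-3/2}$, which satisfies (2) with any $\lambda<1/2$ but has $\Ex{R^2}=\infty$. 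The paper's own proof (and the hypothesis of Theorem~\ref{teo:1}) make clear that condition (1) is meant to be $\Ex{R^{p}}<\infty$ for some $p>1$. With that reading your computation is rescued: choosing $p=1+\lambda/2>1$ gives $\Ex{R^{p}}\asymp\sum_k k^{p-1}f(k)\le C\sum_k k^{-1-\lambda/2}<\infty$, which is precisely the exponent you wrote. Your $(1)\Rightarrow(3)$ also needs a small tweak under this reading: use Markov to get $k^{p}f(k)\le\Ex{R^{p}}$ bounded (rather than the tail-sum, which only yields $k^{p-1}f(k)\to 0$), and then proceed as you did.
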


Lemma \ref{Lem:EqvH} plays a role in simplifying several proofs in this Section. The proof of it is straightforward and can be found in the Appendix Subsection \ref{subsec:Usefulprop}. Continuing with the lemmas, let $\mu=\Ex{R}$ and define the auxiliary variable:
\begin{align}\label{eq:Gumbel-2}
    g_k=\frac{\sum_{i=1}^{k}f(i)}{\mu}.
\end{align} About this auxiliary sequence,  it is trivial to check that $g_1=\mu^{-1}$ and $g_k$ is a non-decreasing sequence that converges to one. Another important observation is given below.

\begin{Lem} \label{Lem:Gumbel-1}
If $f$ satisfies condition \eqref{eq:ContinuousGumbelCondition2}, i.e. exists $\lambda>0$ such that $\lim_{k\to \infty} f(k)k^{1+\lambda}=0$. Then
\begin{align}
     \lim_{n\to \infty} (1-g_n)\ln{n}&=0.  \label{eq:Gumbel-4}
\end{align}
\end{Lem}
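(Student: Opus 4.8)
The plan is to rewrite $1-g_n$ as a tail sum of $f$ and then exploit the polynomial decay granted by \eqref{eq:ContinuousGumbelCondition2}. Since $R$ is $\N$-valued, the tail-sum formula gives $\mu=\Ex{R}=\sum_{i\geq 1}\P{R\geq i}=\sum_{i\geq 1}f(i)$; this series is finite because, by Lemma \ref{Lem:EqvH}, condition \eqref{eq:ContinuousGumbelCondition2} forces $\Ex{R^{p+1}}<\infty$ for some $p>1$, in particular $\mu<\infty$. Hence, from the definition \eqref{eq:Gumbel-2},
\begin{align*}
    \mu\,(1-g_n)=\mu-\sum_{i=1}^n f(i)=\sum_{i=n+1}^\infty f(i),
\end{align*}
so it suffices to prove that $\ln n\sum_{i>n}f(i)\to 0$.

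Next I would use the hypothesis in quantified form: fix $\lambda>0$ with $f(k)k^{1+\lambda}\to 0$, and let $\delta>0$ be arbitrary. Then there is $N=N(\delta)$ with $f(i)\leq \delta\, i^{-(1+\lambda)}$ for all $i\geq N$. For $n\geq N$, comparing the tail sum with the integral $\int_n^\infty x^{-(1+\lambda)}\,dx=\lambda^{-1}n^{-\lambda}$ gives
\begin{align*}
    \sum_{i=n+1}^\infty f(i)\;\leq\;\delta\sum_{i=n+1}^\infty i^{-(1+\lambda)}\;\leq\;\frac{\delta}{\lambda}\,n^{-\lambda}.
\end{align*}
Multiplying by $\ln n$ and using $\ln n/n^{\lambda}\to 0$, we obtain $\limsup_{n\to\infty}\ln n\sum_{i>n}f(i)\leq \frac{\delta}{\lambda}\limsup_{n\to\infty}\frac{\ln n}{n^{\lambda}}=0$. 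As $\delta>0$ is arbitrary and the quantity is nonnegative, $\lim_{n\to\infty}\ln n\sum_{i>n}f(i)=0$, which together with the display above is exactly \eqref{eq:Gumbel-4}.

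There is no real obstacle here; the only points needing a little care are justifying $\mu<\infty$ (so that the tail rewriting is legitimate) and the ordering of the quantifiers, namely taking $\limsup_n$ first and only then letting $\delta\to 0$. One could alternatively absorb the logarithm by invoking the formally stronger condition \eqref{eq:ContinuousGumbelCondition} from Lemma \ref{Lem:EqvH}, but the estimate above already shows the extra $\ln n$ factor costs nothing.
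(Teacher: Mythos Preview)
Your proof is correct and follows essentially the same route as the paper: rewrite $(1-g_n)$ as the tail sum $\mu^{-1}\sum_{i>n}f(i)$, bound $f(i)$ by a constant times $i^{-(1+\lambda)}$ via \eqref{eq:ContinuousGumbelCondition2}, and conclude that $(1-g_n)\ln n\leq C\mu^{-1}\ln(n)/n^{\lambda}\to 0$. The paper is simply more terse, absorbing your $\delta/\lambda$ into a single constant $C$ and omitting the explicit integral comparison and the justification that $\mu<\infty$.
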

\begin{proof}[Proof of Lemma \ref{Lem:Gumbel-1}]
Using condition \eqref{eq:ContinuousGumbelCondition2}, for some $\lambda>0$, exists a $n_0(\lambda)$ and a constant $C=C(\lambda)$ such that for every $n>n_0$:
\begin{align*}
    (1-g_n)\ln{n}&=\frac{\ln{n}}{\mu}\sum_{i=n+1}^{\infty} f(i)\leq \frac{C\ln{n}}{n^{\lambda}\mu}.
\end{align*} The proof follows by taking the limit.
\end{proof}

Now, fix an arbitrary $\alpha\in(0,1)$ and define the $\alpha-vacant$ set
\begin{align*}
    \mathcal{V}_{\alpha}=\left(\mathbb{Z}/n\mathbb{Z}\right)\setminus X_{\alpha \frac{n\ln{n}}{\mu}},
\end{align*} this is the set of points in $\mathbb{Z}/n\mathbb{Z}$ that have not yet been covered at time $\alpha \frac{n\ln{n}}{\mu}$.


The first step of the proof consists in showing that $\mathcal{V}_{\alpha}$ is a set of sparse points that has a polynomial size; see Propositions \ref{Prop1} and \ref{Prop2}. The second step relates the cover time of the sparse set $\mathcal{V}_{\alpha}$ with an time scale of a classical coupon collector. In this relation, we prove that the cover happens one point at a time; see Proposition \ref{Prop3}. With that in hand, the proof of Theorem \ref{teo:1} follows by simple observations made in Subsection~\ref{subsec:Gumbel-ProofofTheorem}.

For any $\alpha,\beta\in (0,1)$, define the event
\begin{align*}
    A_\alpha(\beta)=\left\{\exists x,y\in \mathcal{V}_{\alpha} \,: \, |x-y|<n^{\beta}\right\}.
\end{align*}

\begin{Prop}[Sparse]\label{Prop1}
  Choose $R$ satisfying $\Ex{R}=\mu<\infty.$ Then, for every choice of $\beta\in(0,1)$ there exists $\alpha=\alpha(\beta,\mu)<1$ such that
\begin{align}\label{eqProp1}
    \lim_{n\to \infty}\P{A_{\alpha}(\beta)}= 0.
\end{align}
\end{Prop}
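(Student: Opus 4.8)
The strategy is a first-moment (union bound) computation on the number of "close pairs" of vacant points. Fix $\beta \in (0,1)$ and a value $t = t(n) = \alpha \frac{n \ln n}{\mu}$, and let $N(t)$ be the Poisson$(t)$ number of arcs placed by time $t$. For a fixed pair of points $x, y \in \mathbb{Z}/n\mathbb{Z}$ with $|x-y| < n^\beta$, I want to estimate $\P{x, y \in \mathcal{V}_\alpha}$, i.e. the probability that no arc among the $N(t)$ placed covers $x$ and none covers $y$. Conditioning on $N(t) = k$ and on the radii $(R_i)_{i\le k}$, each arc independently fails to cover a given point $x$ with probability $1 - R_i/n$ (the uniform starting point must avoid the $R_i$ positions whose arc would hit $x$), and fails to cover both $x$ and $y$ with probability roughly $1 - (R_i + \min\{R_i + |x-y|, \text{stuff}\})/n$; the key point is that since $R_1 \ge 1$ a.s., a single arc kills at least one of $x,y$ with probability at least $(R_i+1)/n \ge 2/n$ when $|x-y|$ is at least the arc's reach, but more simply, the two "bad for $x$" and "bad for $y$" events are close to disjoint, so $\P{\text{arc misses both}} \le 1 - \frac{R_i + 1}{n}$ up to lower-order corrections. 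Averaging over $R_i$ via $\Ex{R_i} = \mu$ gives $\P{x,y \in \mathcal{V}_\alpha \mid N(t)=k} \le \big(1 - \frac{\mu+1}{n} + O(n^{-1}\cdot\text{small})\big)^k$ — here I should be careful to extract the gain: a single point is vacant with probability $\approx (1-\mu/n)^k \approx n^{-\alpha}$, and the pair should be vacant with probability $\approx n^{-\alpha} \cdot n^{-\alpha'}$ for some $\alpha' > 0$ coming from the extra $1/n$ in the per-arc miss probability, i.e. roughly $n^{-\alpha(1 + 1/\mu)}$ or at least $n^{-\alpha}\cdot n^{-\varepsilon}$ for a definite $\varepsilon = \varepsilon(\alpha,\mu) > 0$.

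Then I take the union bound: the number of ordered pairs $(x,y)$ with $0 < |x-y| < n^\beta$ is at most $n \cdot 2 n^\beta = 2 n^{1+\beta}$, so
\[
  \P{A_\alpha(\beta)} \le 2 n^{1+\beta} \cdot \P{x_0, y_0 \in \mathcal{V}_\alpha}
  \le 2 n^{1+\beta} \cdot n^{-\alpha(1 + c/\mu)}
\]
for some explicit constant $c>0$. This tends to $0$ provided $\alpha(1 + c/\mu) > 1 + \beta$, which can be arranged by choosing $\alpha$ close enough to $1$ (since $1 + c/\mu > 1$, there is room: pick $\alpha = \alpha(\beta,\mu) < 1$ with $\alpha > \frac{1+\beta}{1 + c/\mu}$, which is feasible exactly when $\frac{1+\beta}{1+c/\mu} < 1$, i.e. $\beta < c/\mu$; for larger $\beta$ one instead needs the sharper two-point exponent). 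To handle all $\beta \in (0,1)$ uniformly I will instead keep the exponent of the two-point estimate as sharp as possible — aiming for $\P{x_0,y_0 \in \mathcal{V}_\alpha} \approx n^{-2\alpha}$ when $|x-y|$ is not too small (the two miss-events being nearly independent across the $k$ arcs), which gives the bound $2n^{1+\beta - 2\alpha} \to 0$ as soon as $\alpha > \frac{1+\beta}{2}$; since $\beta < 1$ this always admits a valid $\alpha < 1$.

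The main obstacle is the two-point probability estimate: I must quantify precisely how much more unlikely it is for two nearby points to both be vacant than for one point, and this requires care because when $|x-y|$ is very small (say $O(1)$) a single arc of length $R_i$ may well cover both, so the events "$x$ vacant" and "$y$ vacant" are strongly positively correlated, and the naive product bound $n^{-2\alpha}$ fails. The fix is that even in that regime, an arc covering $x$ but not $y$ (or vice versa) still has positive probability bounded below — roughly $|x-y|/n$ — as long as $R_i$ is large enough, and since a positive fraction of the radius mass satisfies $R_i \ge 1$, one extracts an extra factor like $(1 - c|x-y|/n)^k \le \exp(-c' |x-y| \ln n)$. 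Combined with $|x-y| \ge 1$ this already yields an extra polynomial gain, and one then splits the union bound by dyadic ranges of $|x-y| \in [2^j, 2^{j+1})$, bounding the contribution of each range by $n \cdot 2^{j+1} \cdot n^{-\alpha} \cdot \exp(-c' 2^j \ln n)$ and summing over $0 \le j \le \beta \log_2 n$; the geometric-type sum is dominated by its first term $n^{2 - \alpha}$ times $n^{-c'}$, forcing $\alpha$ close to $1$ to be unnecessary and making the choice of $\alpha(\beta,\mu)$ explicit. Getting these constants to line up — and in particular ensuring the bound is uniform over the full dyadic range up to $n^\beta$ — is where the real work lies; everything else is routine Poissonization and union-bound bookkeeping.
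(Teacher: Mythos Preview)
Your approach is essentially the paper's: union bound over close pairs, an exact two-point vacancy computation, and a split of the sum according to the distance $d=|x-y|$. Your identification of the two regimes --- $\P{x,y\in\mathcal V_\alpha}\approx n^{-2\alpha}$ for moderately separated pairs, forcing $\alpha>\tfrac{1+\beta}{2}$, and a weaker bound $\approx n^{-\alpha(1+1/\mu)}$ when $d$ is $O(1)$, forcing $\alpha(1+\mu^{-1})>1$ --- matches the paper's choice $\alpha>\max\bigl\{\tfrac{1}{1+\mu^{-1}},\,\tfrac{1+\beta}{2}\bigr\}$ exactly.

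One quantitative correction, though: your claimed extra factor $\exp(-c'\,d\ln n)=n^{-c'd}$ is \emph{not} a valid upper bound on the two-point probability once $d$ is moderately large. The paper computes $\P{0,d\in\mathcal V_\alpha}=n^{-\alpha(g_d+g_{n-d})}$ with $g_d=\mu^{-1}\sum_{i=1}^d f(i)$, and since $g_d\le 1$ the true exponent never exceeds $2\alpha$; your bound $n^{-\alpha-c'd}$ undershoots the truth as soon as $c'd>\alpha$, so the dyadic sum $\sum_j n\cdot 2^{j+1}\cdot n^{-\alpha}\cdot n^{-c'2^j}$ is not a legitimate upper bound for $\P{A_\alpha(\beta)}$. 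The fix is simpler than a dyadic decomposition: split at a \emph{fixed} threshold $k_0$ chosen so that $g_k>1-\varepsilon/2$ for all $k>k_0$. The finitely many terms $k\le k_0$ use only $g_k\ge g_1=\mu^{-1}$ and contribute $k_0\,n^{1-\alpha(1+\mu^{-1}-\varepsilon/2)}$; the remaining $\le n^\beta$ terms use $g_k+g_{n-k}\ge 2-\varepsilon$ and contribute $n^{1+\beta-\alpha(2-\varepsilon)}$. Both vanish for $\alpha$ as above and $\varepsilon$ small, which is precisely the paper's argument.
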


The proof of this proposition exposed in Subsection ~\ref{subsubsection:ProofofProp1} is based on a union bound argument on the set $A_{\alpha}(\beta)$. 

\begin{Prop}[Concentration]\label{Prop2}
For all $\alpha<1$, and for any given $\delta>0$
 \begin{align} \label{eqProp2}
     \lim_{n\to \infty} \P{\left||\mathcal{V}_{\alpha}|-n^{1-g_n\alpha}\right|\geq \delta n^{1-g_n\alpha}}= 0.
 \end{align}
\end{Prop}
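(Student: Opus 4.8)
The plan is to compute the first and second moments of $|\mathcal{V}_\alpha|$ and apply Chebyshev's inequality. Write $|\mathcal{V}_\alpha| = \sum_{x \in \mathbb{Z}/n\mathbb{Z}} \ind\{x \notin X_{\alpha n \ln n / \mu}\}$, so that
\begin{align*}
  \Ex{|\mathcal{V}_\alpha|} = n \, \P{0 \notin X_{\alpha n \ln n / \mu}}.
\end{align*}
In continuous time, a point $0$ is covered as soon as some object $\mathcal{O}_k$ contains it; by translation invariance, the point $0$ is contained in the $k$-th object iff $U_k \in \{-R_k+1, \dots, 0\}$, which (conditionally on $R_k$) has probability $R_k/n$. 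By the thinning property of the Poisson process, the objects covering $0$ arrive as a Poisson process whose rate is $\Ex{R}/n = \mu/n$... but one must be slightly careful: arcs of radius $r$ arrive at rate $f(r)-f(r+1)$ each covering $0$ with probability $r/n$, and summing $\sum_r (f(r)-f(r+1)) \cdot r/n = \sum_{r\ge 1} f(r)/n$. So the rate of arcs covering $0$ is $\big(\sum_{i\ge 1} f(i)\big)/n$. Hmm — but the sum $\sum_i f(i)$ only converges up to $n$ effectively since radii larger than $n$ cover everything; more precisely the relevant truncation gives rate $\big(\sum_{i=1}^{n} f(i)\big)/n + (\text{tail}) = \mu g_n / n + o(\cdot)$ using the definition \eqref{eq:Gumbel-2}. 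Thus $\P{0 \notin X_{t}} = \exp\{-\mu g_n t / n\}$ up to controlled corrections, and at $t = \alpha n \ln n / \mu$ this is $\exp\{-\alpha g_n \ln n\} = n^{-\alpha g_n}$, giving $\Ex{|\mathcal{V}_\alpha|} = n^{1 - \alpha g_n}$ asymptotically. I would carry out this computation carefully, keeping track of the boundary effects for radii near or exceeding $n$, and then wrapping the circle (so that $X_t = \mathbb{Z}/n\mathbb{Z}$ once a single arc of length $\ge n$ appears).

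The second step is the variance bound. I would write
\begin{align*}
  \Ex{|\mathcal{V}_\alpha|^2} = \sum_{x,y} \P{x \notin X_t, \, y \notin X_t},
\end{align*}
and estimate $\P{x \notin X_t, y \notin X_t}$ via inclusion-exclusion on the (Poissonian) arc arrivals. Two points $x$ and $y$ at (cyclic) distance $d$ are simultaneously uncovered iff no arc covers $x$ and no arc covers $y$; the rate of arcs covering $x$ or $y$ is (rate covering $x$) $+$ (rate covering $y$) $-$ (rate covering both). The rate of arcs covering both $x$ and $y$ is at most the rate of arcs of radius $> d$, namely roughly $f(d)$-order quantities, which is small for most pairs. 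Hence for typical pairs $\P{x \notin X_t, y \notin X_t} \approx \big(\P{x \notin X_t}\big)^2 (1 + \text{small})$, giving $\Ex{|\mathcal{V}_\alpha|^2} \le (1+o(1)) \Ex{|\mathcal{V}_\alpha|}^2 + (\text{correction from nearby pairs})$. The correction from pairs with $d$ small is bounded by $n \cdot (\text{number of small } d) \cdot n^{-\alpha g_n}$, which is lower order provided $n^{1-\alpha g_n} \to \infty$; and $g_n \to 1$, $g_n < 1$, so $1 - \alpha g_n \ge 1 - \alpha > 0$ for large $n$. Then $\Var{|\mathcal{V}_\alpha|} = o\big(\Ex{|\mathcal{V}_\alpha|}^2\big)$ and Chebyshev finishes the proof.

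The main obstacle I expect is the bookkeeping around arcs whose radius is comparable to or larger than $n$: these wrap around the torus and cover everything, so the naive rate computation $\sum_i f(i)/n$ must be truncated, and one must verify that the discrepancy between $\sum_{i=1}^n f(i) = \mu g_n$ and the "true" effective rate contributes only a multiplicative $1+o(1)$ to $\P{0 \notin X_t}$ at the relevant time scale $t = \Theta(n \ln n)$. This needs the moment assumption $\Ex{R} < \infty$ (so that $g_n \to 1$) together with, ideally, the tail control from Lemma \ref{Lem:Gumbel-1} — although here only $\Ex{R} < \infty$ is assumed, so the convergence $g_n \to 1$ suffices and one does not need the sharper rate. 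The second genuine subtlety is controlling the joint probability for nearby pairs uniformly; here I would just use the crude bound $\P{x \notin X_t, y \notin X_t} \le \P{x \notin X_t}$ for the $O(n)$ pairs with small cyclic distance, which costs a factor that is negligible against $\Ex{|\mathcal{V}_\alpha|}^2 = n^{2(1-\alpha g_n)}$ as long as $\alpha g_n < 1/2$; for $\alpha g_n \in [1/2, 1)$ one needs the slightly better pairwise estimate described above. A clean way to organize all of this is to first prove the analogous statement for the simpler quantity where arcs are conditioned to have radius $\le n/2$ (say), and then argue that arcs larger than that are negligible on the event that the cover is not yet complete.
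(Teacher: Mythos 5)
Your overall strategy matches the paper's exactly: compute $\Ex{|\mathcal{V}_\alpha|}$ and $\Ex{|\mathcal{V}_\alpha|^2}$ via the Poisson thinning rate $\mu g_n/n$ for a single point (and the analogous rate for a pair), then apply Chebyshev. The first-moment formula you derive, $\Ex{|\mathcal{V}_\alpha|}=n^{1-\alpha g_n}$, is in fact \emph{exact}, not just ``asymptotic'': the rate of arcs covering a fixed point is exactly $\bigl(\sum_{i=1}^n f(i)\bigr)/n=\mu g_n/n$ (the arcs of radius $\ge n$ contribute the term $f(n)$ to this sum after the telescoping; there are no leftover tail corrections). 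The paper packages this and the pair formula $\P{0,k\in\mathcal{V}_\alpha}=n^{-\alpha(g_k+g_{n-k})}$ into Lemma~\ref{Lem:Probability2}, so you can drop the hedging about ``controlled corrections.''

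The genuine gap is in your claim that ``only $\Ex{R}<\infty$ is assumed, so the convergence $g_n\to1$ suffices and one does not need the sharper rate.'' This is false, and it is precisely the delicate point of the proof. After Chebyshev, one needs
\[
f(\alpha,n)=\frac{2}{n}\sum_{k=1}^{n/2}\Bigl(n^{\alpha(2g_n-g_k-g_{n-k})}-1\Bigr)\longrightarrow 0,
\]
and this requires $(1-g_n)\ln n\to0$, i.e.\ condition~\eqref{eq:Gumbel-4} from Lemma~\ref{Lem:Gumbel-1}, which is where the hypothesis $\Ex{R^{1+\e}}<\infty$ enters. If only $g_n\to1$ is known, $f(\alpha,n)$ can stay bounded away from zero: take a step-tail $f(i)=1/(n_k\ln n_k)$ for $n_{k-1}<i\le n_k$ with $n_k=2^{2^k}$; then $\mu<\infty$, but along $n=n_k$ one gets $2g_n-g_j-g_{n-j}=1/(\mu\ln n)$ for all $j$ in the bulk, so $n^{\alpha(2g_n-g_j-g_{n-j})}\to e^{\alpha/\mu}>1$ for a positive fraction of the indices and $f(\alpha,n_k)\to e^{\alpha/\mu}-1>0$. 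In that example $\Var(|\mathcal{V}_\alpha|)$ is $\Theta\bigl(\Ex{|\mathcal{V}_\alpha|}^2\bigr)$ and Chebyshev gives nothing. Your ``crude bound for small $d$, refined for typical pairs'' split also fails to resolve this: for the refined estimate on typical pairs to yield a $(1+o(1))$ factor uniformly, you need $(g_n-g_{d_0})\ln n\to0$ for the cutoff $d_0$, and choosing $d_0$ constant makes that limit $+\infty$. The paper handles the sum by splitting at $k\le k_0$ (fixed), $k_0<k\le n^\gamma$, and $n^\gamma<k\le n/2$, and invokes $(1-g_{n^\gamma})\ln n^\gamma\to0$ to kill the third range — that invocation is essential, not optional. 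Proposition~\ref{Prop2} is stated without explicit hypotheses on $R$ because it is proved in the standing context of Theorem~\ref{teo:1}, i.e.\ under $\Ex{R^{1+\e}}<\infty$.
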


The proof of this proposition exposed in Subsection~\ref{subsubsection:ProofofProp2} relies on a second moment estimate of the random variable $|\mathcal{V}_{\alpha}|$. 

To state Proposition \ref{Prop3}, let $\beta\in(0,1)$ and define the family of sets
\begin{align*}
    \mathcal{K}_{\beta}^n=\{K\subset \mathbb{Z}/n\mathbb{Z} : \forall x,y\in K, |x-y|>n^{\beta}\}.
\end{align*} Also, for any set $K\in \mathcal{K}_{\beta}$ define the time to cover $K$ as $T_K=\inf\{t: K\subset X_t\}$.

\begin{Prop}[Covering a sparse set]\label{Prop3}
Take $R$ satisfying the condition \eqref{eq:ContinuousGumbelCondition}, i.e., there exists $\lambda>0$ such that $\lim_{k\to \infty} f(k)k^{1+\lambda}\ln{k}=0$. With fixed $R$, exists $\beta_0=\beta_0(\lambda)<1$, such that for every $\beta\in (\beta_0,1)$ and any sequence of sets $\{K(n)\}_n$ that satisfies $\lim_{n}|K(n)|=\infty$ and $K(n)\in \mathcal{K}_{\beta}^n$ for every $n>0$, we get that: 
\begin{align}\label{eqProp3}
    \frac{\mu}{n}T_{K(n)}-\ln{|K(n)|}\overset{D}{\implies} \G(0,1),
\end{align}
\end{Prop}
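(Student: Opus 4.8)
\textbf{Proof proposal for Proposition \ref{Prop3}.}

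The plan is to reduce the covering of the sparse set $K(n)$ to a classical coupon-collector problem and then invoke the standard Gumbel limit for the maximum of $m$ i.i.d.\ exponential-type waiting times. The key structural observation is that, because the points of $K(n)$ are separated by more than $n^\beta$ and the radii $R_k$ are very unlikely to exceed $n^\beta$ (this is where condition \eqref{eq:ContinuousGumbelCondition} enters), with high probability no single arc $\mathcal{O}_k$ ever covers two distinct points of $K(n)$ simultaneously. Conditioned on this ``good'' event, the covering of $K(n)$ behaves like a coupon collector with $m=|K(n)|$ coupons, where in continuous time each point $x\in K(n)$ is covered at an independent exponential clock. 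Concretely, a Poisson arc process of rate $1$ on $\mathbb{Z}/n\mathbb{Z}$ hits a fixed point $x$ (i.e.\ $x\in\mathcal{O}_k$ for the arc arriving at that instant) at rate $\mu/n$ (since $\mathbb{P}(x\in\mathcal{O}_k)=\sum_r \mathbb{P}(R\ge r)/n = \mu/n$), so the time to first cover $x$ is $\mathrm{Exp}(\mu/n)$. Hence $\frac{\mu}{n}T_{K(n)}$ is approximately the maximum of $m$ i.i.d.\ $\mathrm{Exp}(1)$ variables, and $\max_{i\le m}E_i - \ln m \overset{D}{\implies} \G(0,1)$ is the textbook statement.

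The steps I would carry out, in order, are the following. First, fix $\beta_0$ so that $1+\lambda > 1$ forces $\mathbb{P}(R\ge n^{\beta}) = f(\lceil n^\beta\rceil) = o(n^{-\beta(1+\lambda)}\,/\ln(n^\beta)) \to 0$ fast enough; more precisely choose $\beta_0 = 1/(1+\lambda)$, say, so that for $\beta>\beta_0$ the expected number of arcs up to the relevant time scale (which is $O(n\ln n)$ arcs) that have radius at least $n^\beta$ tends to $0$. Define the bad event $B_n = \{\exists k \le K_n^* : R_k \ge n^\beta\}$ where $K_n^*$ is a generous upper cutoff on the number of arrivals needed (e.g.\ $n\ln n \cdot (\ln\ln n)$, which dominates $T_{K(n)}$ with high probability by a crude first-moment bound), and show $\mathbb{P}(B_n)\to 0$ via a union bound using condition \eqref{eq:ContinuousGumbelCondition}. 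Second, on $B_n^c$, no arc covers two points of $K(n)$ at once, so the vector of cover times $(T_x)_{x\in K(n)}$ of the individual points is exactly a vector of i.i.d.\ $\mathrm{Exp}(\mu/n)$ random variables (in the continuous-time construction $X_t = C_{N(t)}$), and $T_{K(n)} = \max_{x\in K(n)} T_x$. Third, apply the classical extreme-value fact: if $E_1,\dots,E_m$ are i.i.d.\ $\mathrm{Exp}(1)$ then $\max_i E_i - \ln m \overset{D}{\implies} \G(0,1)$; with $m = |K(n)|\to\infty$ this gives \eqref{eqProp3}. Finally, glue: since $\mathbb{P}(B_n)\to 0$, the convergence on $B_n^c$ upgrades to unconditional convergence in distribution (a standard Slutsky-type argument: the laws conditioned on $B_n^c$ and unconditionally differ by at most $\mathbb{P}(B_n)$ in total variation).

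The main obstacle is the second step: making rigorous that, conditioned on $B_n^c$, the individual cover times are \emph{exactly} independent exponentials rather than merely approximately so. Independence can fail if an arc, while not covering two points of $K(n)$, still correlates the clocks through the shared uniform starting point — but in fact the events $\{x\in\mathcal{O}_k\}$ for different $x\in K(n)$ are \emph{disjoint} on $B_n^c$ (at most one can occur per arc), which is precisely what decouples the Poisson thinning into independent rate-$\mu/n$ streams, one per point. I would spell this out via a thinning argument: the arrivals that cover \emph{some} point of $K(n)$ form a Poisson process of rate $m\mu/n$, and each such arrival is assigned to a uniformly random point of $K(n)$ (uniform because $U_k$ is uniform and, restricted to arcs that hit $K(n)$ but not two points, the hit point is symmetric among the $m$ candidates) — a second subtlety is that ``uniform among the $m$ candidates'' is only true up to boundary corrections of order $n^\beta/n$, negligible since $\beta<1$. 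A secondary technical point is controlling the upper cutoff $K_n^*$ so that the bad event is genuinely over a bounded-enough horizon; this is handled by a rough large-deviation or Markov bound on the number of arrivals before all $m$ points are covered, using $m \le n$. Once these thinning and cutoff estimates are in place, the remainder is the routine extreme-value computation.
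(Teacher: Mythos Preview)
Your plan coincides with the paper's: reduce to a coupon collector by ensuring no arc hits two points of $K(n)$, then apply the Gumbel limit for the maximum of i.i.d.\ exponentials. The paper implements the decoupling by \emph{truncation}---replacing each object by $\overline{\mathcal{O}}_k = U_k + \Gamma_{\min(R_k,\lfloor n^\beta\rfloor)}$---rather than by conditioning on $B_n^c$; the truncated cover times are then \emph{unconditionally} i.i.d.\ exponentials and agree with the true ones on the event $\{$no radius $\geq n^\beta$ up to time $2n\ln n\}$, which sidesteps the independence worry you yourself flag in your final paragraph. Your choice $\beta_0 = (1+\lambda)^{-1}$ is exactly the paper's (Lemma~\ref{Lem:Gumbel-2}).

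There is, however, one genuine gap. On the torus $\mathbb{Z}/n\mathbb{Z}$ the hit probability of a fixed point is \emph{not} $\mu/n$: since an arc of length $\geq n$ wraps around and covers everything, one has $\mathbb{P}(x\in\mathcal{O}_k)=\frac{1}{n}\sum_{i=1}^{n}f(i)=\mu g_n/n$, and in the truncated process the per-point exponential rate is $\mu g_{n^\beta}/n$. The coupon-collector step therefore delivers only
\[
\frac{\mu\, g_{n^\beta}}{n}\,T_{K(n)} - \ln|K(n)| \overset{D}{\Longrightarrow} \G(0,1),
\]
and you must still remove the factor $g_{n^\beta}$. The paper handles this explicitly: from the truncated Gumbel limit one first gets $T_{K(n)}=O_P(n\ln n)$ (using $|K(n)|\le n$), and then $\frac{\mu}{n}T_{K(n)}(1-g_{n^\beta})\to 0$ in probability follows from $(1-g_{n^\beta})\ln n \to 0$, which is Lemma~\ref{Lem:Gumbel-1}. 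This correction is absent from your outline and is a second place where the moment hypothesis enters, beyond the union bound on large radii. As a minor aside, the ``boundary corrections of order $n^\beta/n$'' you mention are a non-issue on the torus: by translation invariance every point of $K(n)$ has exactly the same hit rate.
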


The proof of Proposition \ref{Prop3}, presented in Subsection \ref{subsubsection:ProofofProp3}, obtains \eqref{eqProp3} by creating a coupling between the covering of the set $K(n)$ and a time change of the classical Coupon collector problem, since it will be proved that with high probability the covering of $K(n)$ happens one point at a time.

\subsection{Proof of Theorem \ref{teo:1}} \label{subsec:Gumbel-ProofofTheorem}

With no further delay, assuming all the tree propositions, Theorem \ref{teo:1} will be proven in this Subsection. The subsequent Subsection \ref{subsec:Gumbel-ProofofPropositions} will contain the proofs of the propositions. 

\begin{proof}[\textbf{Proof of Theorem \ref{teo:1}}] The idea of the proof consist in using the hypotheses and Propositions \ref{Prop1} and \ref{Prop2} to find a sparse and large set of vacant points. Next, using Proposition \ref{Prop3}, when covering this sparse and large set, the Gumbel distribution will appear. 

Taking $R$ such that $\Ex{R^p}<\infty$ for some $p>1$, and $\Ex{R}=\mu$, by Lemma \ref{Lem:EqvH} one can find $\lambda>0$ such that $f(k)k^{1+\lambda}\ln{k}$ goes to zero, when $k$ goes to infinity. For this fixed $\lambda>0$,  find $\beta_0(\lambda)$ using Proposition \ref{Prop3}, and fix any $\beta\in (\beta_0,1)$ to control how sparse the set needs to be. Now, with $\beta$ fixed, use Proposition \ref{Prop1} to fix some $\alpha\in(\alpha(\beta,\mu),1)$.

With the parameters fixed, start using Proposition \ref{Prop1} and \ref{Prop2} to show that the set
\begin{align*}
    \Omega_{n}=\left\{\omega: |\mathcal{V}_{\alpha}(\omega)|>\ln{n}, \mathcal{V}_{\alpha}(\omega)\in \mathcal{K}^n_{\beta}\right\}
\end{align*} has probability converging to one. To exactly compute how big in size the vacant set is, use the full strength of Proposition \ref{Prop2} to get
\begin{align}\label{eq:Gumbel-5}
    \ln\left(\frac{n^{1-g_n \alpha}}{|\mathcal{V}_{\alpha}|}\right)\overset{P}{\longrightarrow}0,
\end{align} where the right arrow $P$ indicates convergence in probability.

\quad On the event $\Omega_n$, define $T_{\alpha}=\inf\{t>0: \mathcal{V}_{\alpha}\subset X_{t+\alpha \frac{n\ln{n}}{\mu}}\}$ the cover time of the vacant set $\mathcal{V}_{\alpha}$. In particular, one can observe that
\begin{align}\label{eq:Gumbel-6}
    \ind_{\Omega_n}T_{\alpha}=\ind_{\Omega_n}\left(T_n-\alpha \frac{n\ln{n}}{\mu}\right),
\end{align} where $T_n=\inf\{t: \mathbb{Z}/n\mathbb{Z}\subset X_t\}$. Since it is also the case that $|\mathcal{V}_{\alpha}|$ diverges on the event $\Omega_n$, apply Proposition~\ref{Prop3} to obtain
\begin{align}\label{eq:Gumbel-7}
    \ind_{\Omega_n}\left(\frac{\mu}{n} T_{\alpha}-\ln{|\mathcal{V}_{\alpha}|}\right) \overset{D}{\implies} \G(0,1),
\end{align} when $n\to \infty$;  Therefore, using equations \eqref{eq:Gumbel-7} and \eqref{eq:Gumbel-5}:
\begin{align*}
     \ind_{\Omega_n}\left(\frac{\mu}{n}T_{\alpha} -(1-\alpha g_n)\ln{n}\right)\overset{D}{\implies}\G(0,1).
\end{align*} Now, by definition \eqref{eq:Gumbel-6}, we get that:
\begin{align*}
    \ind_{\Omega_n}&\left(\frac{\mu}{n}\left(T_n-\alpha \frac{n\ln{n}}{\mu}\right) -(1-\alpha g_n)\ln{n}\right)\nonumber\\&=\ind_{\Omega_n}\left(\frac{\mu}{n}T_{n} -(1+\alpha (1-g_n))\ln{n}\right)\overset{D}{\implies}\G(0,1).
\end{align*}Given any sequence of random variables $(Y_n)_n$, and $\Omega_n$ that satisfies: $Y_n \ind_{\Omega_n}\overset{d}{\implies} Y$ and $\P{\Omega_n}\to 1$ when $n\to \infty$. Then $Y_n \overset{d}{\implies} Y$. In our case, using condition \eqref{eq:Gumbel-4} and  the fact that $\P{\Omega_n}\to 1$ when $n\to \infty$, the equation lead to:
\begin{align*}
    \frac{\mu}{n}T_n -\ln{n}\overset{D}{\implies}\G(0,1),
\end{align*}proving the Theorem \ref{teo:1}. 
\end{proof}

\subsection{Proof of Propositions \ref{Prop1}, \ref{Prop2} and \ref{Prop3}}
\label{subsec:Gumbel-ProofofPropositions}

This Subsection aims to prove all three propositions. Before doing this, let us start by investigating the probability that a single site remains vacant. 

\begin{Lem} \label{Lem:Probability2}
On the continuous cover process at time $\alpha \frac{n\ln{n}}{\mu}$
\begin{align}\label{eq:Proofpropequation1}
    \P{0\in \mathcal{V}_{\alpha}}&=n^{-\alpha g_n}, \text{ and }\\
    \P{0,k\in \mathcal{V}_{\alpha}}&=n^{-\alpha(g_k+ g_{n-k})}\label{eq:Proofpropequation2}
\end{align} for any $k\in\{1,...,n-1\}$.
\end{Lem}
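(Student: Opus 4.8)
The plan is to work directly with the continuous-time covering process $(X_t)$ and compute the probability that the relevant sites remain uncovered at time $t = \alpha \frac{n\ln n}{\mu}$. The key observation is that in continuous time, the arcs $\mathcal{O}_k = U_k + \Gamma_{R_k}$ arrive according to a Poisson process of rate $1$, so by standard Poisson thinning, for any fixed target set $S \subset \mathbb{Z}/n\mathbb{Z}$, the arcs that intersect $S$ arrive as a Poisson process of rate $p_S := \P{\mathcal{O}_1 \cap S \neq \emptyset}$, and $S \subset \mathcal{V}_\alpha$ (i.e.\ $S$ is untouched at time $t$) exactly when no such arc has arrived by time $t$. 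Hence $\P{S \subset \mathcal{V}_\alpha} = e^{-t\, p_S}$. So the whole lemma reduces to computing $p_S$ for $S = \{0\}$ and for $S = \{0,k\}$.

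First I would compute $p_{\{0\}} = \P{0 \in \mathcal{O}_1}$. Since $U_1$ is uniform on $\mathbb{Z}/n\mathbb{Z}$ and independent of $R_1$, conditioning on $R_1 = r$ gives $\P{0 \in U_1 + \Gamma_r} = r/n$ (the arc $\{U_1, \ldots, U_1 + r - 1\}$ contains $0$ for exactly $r$ values of $U_1$, noting $R_1 \le n$ can be assumed or the probability saturates at $1$). Averaging, $p_{\{0\}} = \Ex{R_1}/n = \mu/n$, and then
\begin{align*}
  \P{0 \in \mathcal{V}_\alpha} = \exp\left\{-\frac{\alpha n \ln n}{\mu}\cdot \frac{\mu}{n}\right\} = n^{-\alpha}.
\end{align*}
Wait — the claimed exponent is $\alpha g_n$, not $\alpha$; the discrepancy is precisely the truncation of $R$ at scale $n$. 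More carefully, $\P{0 \in U_1+\Gamma_r} = \min\{r,n\}/n$, so $p_{\{0\}} = \frac1n \sum_{r\ge 1}\P{R_1 \ge r}\min\{1, \text{relevant count}\}$; rewriting $\Ex{\min\{R_1,n\}} = \sum_{i=1}^n \P{R_1\ge i} = \sum_{i=1}^n f(i) = \mu g_n$ by definition \eqref{eq:Gumbel-2}, we get $p_{\{0\}} = \mu g_n / n$ and hence $\P{0\in\mathcal V_\alpha} = n^{-\alpha g_n}$, which is \eqref{eq:Proofpropequation1}.

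For \eqref{eq:Proofpropequation2}, I would compute $p_{\{0,k\}} = \P{\mathcal{O}_1 \cap \{0,k\} \neq \emptyset}$ by inclusion–exclusion: $p_{\{0,k\}} = \P{0\in\mathcal O_1} + \P{k\in\mathcal O_1} - \P{0,k\in\mathcal O_1} = \frac{2\mu g_n}{n} - \frac1n \Ex{(\text{number of starting points whose arc of length } \min\{R_1,n\} \text{ covers both } 0 \text{ and } k)}$. A connected arc $\{u,\dots,u+r-1\}$ on the torus contains both $0$ and $k$ iff it contains the short way around from $0$ to $k$ (length $k$) or the short way from $k$ to $0$ (length $n-k$); counting starting points, this happens for $\max\{0, \min\{r,n\}-k\} + \max\{0,\min\{r,n\}-(n-k)\}$ values of $u$ (the two cases being disjoint once $r<n$). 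Summing over $r$ as before, $\Ex{\max\{0,\min\{R_1,n\}-k\}} = \sum_{i=k+1}^{n} f(i) = \mu(g_n - g_k)$, and similarly the other term gives $\mu(g_n - g_{n-k})$. Therefore $\P{0,k\in\mathcal O_1} = \frac{\mu}{n}\big((g_n-g_k)+(g_n-g_{n-k})\big)$, and so $p_{\{0,k\}} = \frac{2\mu g_n}{n} - \frac{\mu}{n}(2g_n - g_k - g_{n-k}) = \frac{\mu(g_k + g_{n-k})}{n}$. Plugging into $\P{0,k\in\mathcal V_\alpha} = e^{-t\,p_{\{0,k\}}}$ with $t = \alpha\frac{n\ln n}{\mu}$ yields $n^{-\alpha(g_k+g_{n-k})}$, which is \eqref{eq:Proofpropequation2}.

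The only genuine subtlety — the part I'd be most careful about — is the bookkeeping of "how many starting points $u \in \mathbb{Z}/n\mathbb{Z}$ make the arc cover a given target," especially the edge cases $r \ge n$ (where the arc wraps and covers everything, so one should cap at $n$) and making sure the two wrap-around possibilities for covering $\{0,k\}$ really are mutually exclusive for $r < n$ and correctly combine at $r = n$. Once the identity $\sum_{i=a+1}^{n} f(i) = \mu(g_n - g_a)$ is in hand, everything else is a one-line substitution.
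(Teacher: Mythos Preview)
Your proposal is correct and follows essentially the same approach as the paper: both reduce to computing $p_S=\P{\mathcal O_1\cap S\neq\emptyset}$ and then use $\P{S\subset\mathcal V_\alpha}=e^{-t p_S}$. The only cosmetic difference is that for $S=\{0,k\}$ you go through inclusion--exclusion (computing $\P{0,k\in\mathcal O_1}$ and subtracting), whereas the paper partitions directly on the starting point $U_1$---if $U_1\in\{1,\dots,k\}$ the arc must reach $k$, if $U_1\in\{k+1,\dots,n\}$ it must reach $0$---arriving at $p_{\{0,k\}}=\tfrac{1}{n}\sum_{i=1}^k f(i)+\tfrac{1}{n}\sum_{i=1}^{n-k} f(i)$ in one step.
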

\begin{proof}
Observe that the number of objects covering $0$ corresponds to a Poisson random variable. The computation of its rate involves determining the probability that a single object covers the origin. To proceed, we have:
\begin{align*}
    \P{0\in \mathcal{O}_1}&=\sum_{i=1}^{n}\P{U_1=i-1, R\geq i}=\sum_{i=1}^{n}\frac{f(i)}{n}.
\end{align*} In particular, the rate of the Poisson is given by the product
\begin{align*}
    \alpha\frac{n\log(n)}{\mu}\sum_{i=1}^{n}\frac{f(i)}{n}= \alpha g_n \ln(n). 
\end{align*} Therefore:
\begin{align*}
    \P{0\in \mathcal{V}_{\alpha}}&=\exp\{-\alpha g_n\ln{n}\},
\end{align*} proving \eqref{eq:Proofpropequation1}.

The proof of the statement of equation \eqref{eq:Proofpropequation2} relies on counting the number of objects that at time $\alpha \frac{ n\ln{n}}{\mu}$ hits $0$ or $k$ for some $k\in\{1,...,n-1\}$. For this, observe that: When the uniform $U$ is between $1$ and $k$, the object need just to hit $k$; When it is between $k+1$ and $n$, the object need to hit the origin. So:
\begin{align*}
\mathbb{P}\left(\{0\in \mathcal{O}_1\}\cup \{k\in \mathcal{O}_1\}\right)=\frac{\sum_{i=1}^{k}f(i)}{n}+\frac{\sum_{i=1}^{n-k}f(i)}{n}. 
\end{align*} Therefore:
\begin{align*}
    \P{0,k\in \mathcal{V}_{\alpha}}&=\exp\{-\alpha (g_k+g_{n-k})\ln{n}\},
\end{align*} proving \eqref{eq:Proofpropequation2}.
\end{proof}

\subsubsection{Proof of Proposition \ref{Prop1}}
 \label{subsubsection:ProofofProp1}

\begin{proof}[Proof of Proposition \ref{Prop1}]
Let $\mu=\Ex{R}$, then for any choice of $\beta\in (0,1)$, take $\alpha=\alpha(\mu,\beta)$ satisfying:
\begin{align*}
    1> \alpha >\max\left\{\frac{1}{1+\mu^{-1}},\, \frac{1+\beta}{2}\right\}.
\end{align*} and choose $\e=\e(\mu,\alpha,\beta)$ such that: 
\begin{align}\label{eq:echoose}
    0<\e<\min\left\{2-\frac{(1+\beta)}{\alpha},\frac{2(\alpha(1+\mu^{-1})-1)}{\alpha}\right\}
\end{align} Using a union bound on $A_{\alpha}(\beta)$, and relation \eqref{eq:Proofpropequation2}, the computation leads to:
\begin{align*}
    \P{A_{\alpha}(\beta)}&=\mathbb{P}\left(\bigcup_{x,y\in \T_{n}:\, |x-y|<n^{\beta}} \{x,y\in \mathcal{V}_{\alpha} \}\right)\\
    &\leq n \sum_{k=1}^{\lfloor n^{\beta}\rfloor } \P{0,k \in \mathcal{V}_{\alpha}}=\sum_{k=1}^{\lfloor n^{\beta}\rfloor} n^{1-\alpha(g_k+g_{n-k})}.
\end{align*}

With fixed $\beta$, and with $\alpha$ and $\e$ chosen accordingly. Since $g_1=\mu^{-1}$, and $(g_n)_n$ converges monotonously to one, find $k_0=k_0(\e)$ such that for every $k>k_0$ we have $g_k>1-\e/2$. Then, for $n>2k_0$:
\begin{align*}
    \P{A_{\alpha}(\beta)}&\leq \sum_{k=1}^{k_0} n^{1-\alpha(g_k+g_{n-k})} + \sum_{k=k_0+1}^{\lfloor n^{\beta}\rfloor }n^{1-\alpha(g_k+g_{n-k})}\\
    &\leq \sum_{k=1}^{k_0} n^{1-\alpha(g_k+1-\e/2)} +\sum_{k=1}^{\lfloor n^{\beta}\rfloor} n^{1-\alpha(2-\e)}\\
    &\leq k_0n^{1-\alpha(1+\mu^{-1}-\e/2)}+n^{1+\beta-\alpha(2-\e)}
\end{align*} By the choice of  $\e$ and $\alpha$, it follows that $\P{A_{\alpha}(\beta)}$ decays polynomialy to zero.
\end{proof}

\subsubsection{Proof of Proposition \ref{Prop2}}
\label{subsubsection:ProofofProp2}

\begin{proof}[Proof of Proposition \ref{Prop2}]
The proof follows by Chebyshev's inequality. Start, by using Lemma \ref{Lem:Probability2} to get:
\begin{align*}
    \Ex{|\mathcal{V}_{\alpha}|}&=n^{1-\alpha g_n}, \text{ and }\\
    \Ex{|\mathcal{V}_{\alpha}|^2}&=\sum_{x\in \mathbb{Z}/n\mathbb{Z}}\P{x\in \mathcal{V}_{\alpha}}+\sum_{\substack{x,y\in \mathbb{Z}/n\mathbb{Z}\\ x\neq y}}\P{x,y\in \mathcal{V}_{\alpha}}\\
    &= n^{1-\alpha g_n}+2n\sum_{k=1}^{n/2}n^{-\alpha(g_k+g_{n-k})}.
\end{align*} Now, applying the Chebyshev's inequality  it follows for any $\delta>0$ that
\begin{align}
    \P{\left||\mathcal{V}_{\alpha}|-\Ex{|\mathcal{V}_{\alpha}|}\right|>\delta n^{1-\alpha g_n}}&\leq \frac{\Ex{|\mathcal{V}_{\alpha}|^2}-\Ex{|\mathcal{V}_{\alpha}|}^2}{n^{2(1-\alpha g_n)}\delta^2} \nonumber\\
    &=\frac{n^{1-\alpha g_n}-n^{2(1-\alpha g_n)}+2n\sum_{k=1}^{n/2}n^{-\alpha(g_k+g_{n-k})}}{n^{2(1-\alpha g_n)}\delta^2} \nonumber\\
    \label{eq:Gumbel-8} &=\frac{1}{\delta^2n^{1-\alpha g_n}}+\frac{1}{\delta^2}f(\alpha,n),
\end{align} where:
\begin{align*}
    f(\alpha,n)=\frac{2}{n}\sum_{k=1}^{n/2}(-1+n^{\alpha(2g_n-g_k-g_{n-k})}).
\end{align*} Since $\alpha<1$, and the distribution $f$ satisfies condition \eqref{eq:Gumbel-4} the proofs follows directly of the following Lemma.
\end{proof}
\begin{Lem}
If condition \eqref{eq:Gumbel-4} is satisfied, that is $\lim_n (1-g_n)\ln{n}=0$, and $\alpha<1$, then $\lim_{n\to \infty}  f(\alpha,n)=0.$
\end{Lem}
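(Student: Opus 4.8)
The plan is to show that the summand $-1 + n^{\alpha(2g_n - g_k - g_{n-k})}$ is uniformly small, after splitting the range of $k$ into two regimes. The key observation is that since $(g_j)_j$ is non-decreasing and converges to $1$, for $1 \le k \le n/2$ we have $g_k \le g_{n-k} \le g_n \le 1$, so the exponent $\alpha(2g_n - g_k - g_{n-k})$ is non-negative and equals $\alpha\big((g_n - g_k) + (g_n - g_{n-k})\big)$. Thus the exponent is controlled by $2\alpha(1 - g_k) \le 2(1-g_k)$.

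First I would fix $\e > 0$. By Lemma \ref{Lem:Gumbel-1} (condition \eqref{eq:Gumbel-4}), $(1-g_j)\ln j \to 0$, so there is a $k_1$ such that $(1 - g_k)\ln n \le (1-g_k)\ln(2k) \le \e$ whenever $k \ge k_1$ and $n \le 2k$ — more carefully, for $k_1 \le k \le n/2$ we have $\ln n \le \ln(2k) \cdot \frac{\ln n}{\ln(2k)}$, which is not bounded, so instead I would argue directly: for $k \le n/2$, $n^{\alpha(2g_n - g_k - g_{n-k})} \le n^{2(1-g_k)}$, and I want $n^{2(1-g_k)} \le 1 + \e$ for all large $k$. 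Since $(1-g_j)\ln j \to 0$ does not immediately bound $(1-g_j)\ln n$, the right split is: take the terms with $k > \sqrt n$, where $(1-g_k)\ln n \le 2(1-g_k)\ln k \to 0$ uniformly, so each such summand is at most $\e/2$ for $n$ large; this contributes at most $\e$ to $f(\alpha,n)$. For the terms with $k \le \sqrt n$, there are at most $\sqrt n$ of them, each bounded by $n^{2(1-g_1)} = n^{2(1-\mu^{-1})}$ in the worst case — this is too crude, so instead bound the exponent using $2g_n - g_k - g_{n-k} \le 2(1 - g_k) \le 2(1 - g_1)$ is still $O(1)$; but multiplied by $\frac1n$ and summed over $\le \sqrt n$ terms gives $\frac{1}{n}\sqrt n \cdot n^{2(1-\mu^{-1})} = n^{1/2 + 2(1-\mu^{-1}) - 1}$, which tends to $0$ only if $2(1-\mu^{-1}) < 1/2$, i.e.\ $\mu < 4/3$. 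That is not good enough in general, so the genuinely correct bound for small $k$ must use that $g_{n-k} \to 1$: for $k \le \sqrt n$, $n - k \ge n/2$, hence $g_{n-k} \ge g_{\lceil n/2\rceil} \to 1$, so $2g_n - g_k - g_{n-k} = (g_n - g_k) + (g_n - g_{n-k}) \le (1 - g_1) + (1 - g_{\lceil n/2 \rceil})$, and the first term is the problem.

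The resolution — and this is the step I expect to be the main obstacle — is to handle the prefactor $\frac{2}{n}$ together with $n^{\alpha(\cdots)}$ more carefully: write $f(\alpha,n) = \frac{2}{n}\sum_{k=1}^{n/2}\big(n^{\alpha(2g_n - g_k - g_{n-k})} - 1\big)$ and use that for $x \ge 0$, $e^x - 1 \le x e^x$, with $x = \alpha(2g_n - g_k - g_{n-k})\ln n \le 2\alpha(1-g_k)\ln n$. For the tail $k > \sqrt n$ the factor $x \to 0$ uniformly, so $n^{\alpha(\cdots)} - 1 \le x e^x \le (1+o(1)) \cdot 2\alpha(1-g_k)\ln n$, and $\frac2n \sum_{\sqrt n < k \le n/2} 2\alpha(1-g_k)\ln n \le 4\alpha \sup_{k > \sqrt n}(1-g_k)\ln k \cdot \frac{2}{\ln n}$ is not quite it either — one uses $(1-g_k)\ln n \le (1-g_k)\ln k \cdot 2 \to 0$, so this whole sum is $\le 4\alpha \cdot o(1) \cdot \frac{n/2}{n} \to 0$. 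For the head $k \le \sqrt n$ I would instead bound $\frac2n \sum_{k \le \sqrt n} n^{\alpha(2g_n-g_k-g_{n-k})}$ by noting $2g_n - g_k - g_{n-k} \le 1 - g_k$ plus a quantity $\le 1 - g_{\lceil n/2\rceil} = o(1/\ln n)$ by \eqref{eq:Gumbel-4} applied at $\lceil n/2 \rceil$; so $n^{\alpha(2g_n - g_k - g_{n-k})} \le n^{\alpha(1-g_1)} \cdot n^{o(1)} = n^{\alpha(1 - \mu^{-1}) + o(1)}$, and $\frac2n \cdot \sqrt n \cdot n^{\alpha(1-\mu^{-1}) + o(1)} = 2 n^{\alpha(1-\mu^{-1}) - 1/2 + o(1)}$. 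Finally, since in the application $\alpha > \frac{1}{1 + \mu^{-1}}$ is not assumed here (only $\alpha < 1$), I would instead simply take the truncation at $k \le n^{\gamma}$ for a small $\gamma = \gamma(\mu) > 0$ chosen so that $\alpha \gamma (1 - \mu^{-1}) < $ something negligible, i.e.\ pick $\gamma$ small enough that $\gamma(1-\mu^{-1}) < 1$; then the head contributes $\frac2n n^\gamma n^{\gamma(1-\mu^{-1})} \to 0$, while for $k > n^\gamma$ we get $(1-g_k)\ln n \le \frac{1}{\gamma}(1-g_k)\ln k \to 0$ uniformly, so the tail is handled as above. Choosing $\gamma$ depending only on $\mu$, both pieces vanish, giving $f(\alpha,n) \to 0$.

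In summary, the structure is: (i) reduce to showing $\frac2n\sum_{k\le n/2}(n^{\alpha(2g_n-g_k-g_{n-k})}-1) \to 0$ with a non-negative exponent bounded by $2\alpha(1-g_k)\ln n$; (ii) pick a threshold $k = n^\gamma$ with $\gamma = \gamma(\mu)$ small; (iii) for $k > n^\gamma$, use $(1-g_k)\ln n = O((1-g_k)\ln k) \to 0$ uniformly plus $e^x - 1 \le x e^x$ to bound the tail sum by $o(1)$; (iv) for $k \le n^\gamma$, bound each term crudely by $n^{\alpha\gamma(1-\mu^{-1}) + o(1)}$ (using $g_{n-k} \to 1$ to absorb one of the two gaps into $o(1/\ln n)$) and multiply by $\frac{2 n^\gamma}{n}$, which tends to $0$ by the choice of $\gamma$. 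The delicate point throughout is that condition \eqref{eq:Gumbel-4} only controls $(1-g_k)\ln k$, not $(1-g_k)\ln n$, so one must not let $k$ be too small relative to $n$ in the regime where that control is invoked — hence the polynomial threshold $n^\gamma$.
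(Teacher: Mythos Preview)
Your approach is essentially the same as the paper's: split the sum at a polynomial threshold $k = n^\gamma$, control the tail $k > n^\gamma$ via the hypothesis $(1-g_k)\ln k \to 0$ together with $\ln n \le \gamma^{-1}\ln k$ so that each summand is $o(1)$ uniformly, and bound the head $k \le n^\gamma$ crudely using $g_k \ge g_1 = \mu^{-1}$ and $g_{n-k} \ge g_{\lceil n/2\rceil} = 1 - o(1/\ln n)$. The paper actually uses a three-way split, separating out a fixed initial range $k \le k_0$ from a middle range $k_0 < k \le n^\gamma$; your two-part split is cleaner and suffices.

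There is one slip in your final summary. You write that each head term is bounded by $n^{\alpha\gamma(1-\mu^{-1}) + o(1)}$ and state the condition ``$\gamma(1-\mu^{-1}) < 1$'', which is vacuous. The extra $\gamma$ does not belong in the exponent: for $k \le n^\gamma$ you only have $g_k \ge g_1 = \mu^{-1}$, so the per-term bound is $n^{\alpha(1-\mu^{-1}) + o(1)}$, exactly as you correctly computed earlier in your scratch work with the $\sqrt n$ threshold. The head therefore contributes at most
\[
\frac{2}{n}\cdot n^\gamma \cdot n^{\alpha(1-\mu^{-1}) + o(1)} \;=\; 2\, n^{\gamma + \alpha(1-\mu^{-1}) - 1 + o(1)},
\]
and the condition you actually need is $\gamma < 1 - \alpha(1-\mu^{-1})$. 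This is achievable because $\alpha < 1$ and $\mu \ge 1$ force $\alpha(1-\mu^{-1}) < 1$. With that correction your argument is complete.
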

\begin{proof}
Fix $\alpha<1$, $\mu>1$ and take $\gamma\in\left(0,\frac{1}{2}\right)$, such that $\gamma<2(1-\alpha(1-\mu^{-1}))$. Since $(g_n)_n$ converges monotonously to one, choose $k_0=k_0(\alpha,\gamma)$ such that $1-g_k\leq \gamma(2\alpha)^{-1}$ for all $k>k_0$, also assume that $n>k_0$. Now divide the function $f(\alpha,n)$ into three parts, so that $f(\alpha,n)=I_1+I_2+I_3$, where
\begin{align*}
    I_1&=\frac{2}{n}\sum_{k=1}^{k_0}\left(-1+n^{\alpha(2g_n-g_k-g_{n-k})}\right),\\
    I_2&=\frac{2}{n}\sum_{k=k_0}^{\lfloor n^{\gamma}\rfloor }\left(-1+n^{\alpha(2g_n-g_k-g_{n-k})}\right),\\
    I_3&=\frac{2}{n}\sum_{k=\lceil n^{\gamma}\rceil}^{n/2}\left(-1+n^{\alpha(2g_n-g_k-g_{n-k})}\right).
\end{align*} It remains to show that for every $\e>0$, exists a number $n_0$ such that for every $n>n_0$, then $I_i<\e$ for every $i\in\{1,2,3\}$.

\quad Concerning the term $I_1$, since $k_0$ is fixed, it follows that $I_1$ is a sum of $k_0+1$ elements. We have $k_0$ polynomials in $n$ in the form $n^{\lambda_k}$ for some $\lambda_k$, and one element of the form $k_0 n^{-1}$. Then, taking $n>2k_0$, observe that:
\begin{align*}
    \max_{k\leq k_0} \{\lambda_k\}&=\max_{k\leq k_0}\{\alpha(2g_n-g_k-g_{n-k})-1\}\\
    &\leq\alpha(1-\mu^{-1}+1-g_{n-k_0})-1\\
    &\leq \alpha(1-\mu^{-1})+\frac{\gamma}{2}-1.
\end{align*}Since $\gamma<2(1-\alpha(1-\mu^{-1}))$,  then $\max_{k\leq k_0} \{\lambda_k\}$ is negative. In particular, since each term goes to zero in $I_1$, it is possible to take $n_1=n_1(\e,\gamma,\alpha, k_0)$ such that $I_1<\e$ for every $n>n_1$.

For the term $I_2$, find $n_2'=n_2'(\gamma,\alpha)$ such that for every $n>n_2'$ it is true that $g_n-g_{n-k}=(1-g_{n-k})-(1-g_n)< \gamma(2\alpha)^{-1}$ for every $k\in[k_0,n^{\gamma}]$. In that way:
\begin{align*}
    \frac{2}{n}\sum_{k=k_0}^{\lfloor n^{\gamma}\rfloor}\left(-1+n^{\alpha(2g_n-g_k-g_{n-k})}\right)&\leq  \frac{2}{n}\sum_{k=k_0}^{\lfloor n^{\gamma}\rfloor} \exp\left\{\ln(n)\left(\alpha(g_n-g_k)+\frac{\gamma}{2} \right)\right\}\\
    &\leq 2\exp\left\{\ln(n)\left(\gamma+\frac{\gamma}{2}+\alpha(g_n-g_{k_0})-1\right)\right\} \\
    &\leq 2\exp\left\{\ln(n)\left(\frac{3\gamma}{2}+\alpha(1-g_{k_0})-1\right)\right\} \\
     &\leq 2\exp\left\{\ln(n)\left(2\gamma-1\right)\right\} 
\end{align*} Then, by the choice of $\gamma<1/2$ and $k_0$, find $n_2=n_2(\e,k_0,\gamma)>n_2'$ such that for every $n>n_2$ the value of $I_2$ satisfies $I_2<\e$.

 To compute $I_3$, first use that $(g_n)_n$ is a monotone sequence converging to $1$, then for $k\in[n^{\gamma},n]$, and $n$ is large enough:
\begin{align}\label{eq:Gumbel-:3}
    4\alpha (g_n-g_k) \ln{(n)}< \frac{4\alpha}{\gamma} (1-g_{n^{\gamma}}) \ln{(n^{\gamma})}.
\end{align}  Using that $(1+x)\geq e^{x/2}$ when $x<1$, we can conclude that
\begin{align*}
    (1+4\alpha(g_n-g_k)\ln{(n)})^{1/2\alpha(g_n-g_k)}\geq e^{\ln(n)}=n,
\end{align*} when $4\alpha(g_n-g_k)\ln{(n)}<1$. Therefore, for big values of $n$, it is true that:
\begin{align}\label{eq:gumbelappsimples}
    (-1+n^{2\alpha(g_n-g_k)})\leq 4\alpha(g_n-g_k) \ln{(n)}.
\end{align} In particular, by equation \eqref{eq:gumbelappsimples}, and since $g_k<g_{n-k}$ for every $k\in(n^{\gamma},n/2)$, we get: 
\begin{align*}
    I_3&=\frac{2}{n}\sum_{k=\lceil n^{\gamma}\rceil}^{n/2}\left(-1+n^{\alpha(2g_n-g_k-g_{n-k})}\right) \\
    &\leq \frac{2}{n}\sum_{k=\lceil n^{\gamma}\rceil}^{n/2}\left(-1+n^{2\alpha(g_n-g_k)}\right) \\
    &\leq \frac{8\alpha}{n}\sum_{k=\lceil n^{\gamma}\rceil}^{n/2} (g_n-g_k)\ln{n}
\end{align*}
 Using Lemma \ref{Lem:Gumbel-1} on \eqref{eq:Gumbel-:3}, for every fixed $\gamma$, take $n_3=n_3(\e,k_0,\gamma)$ such that $\frac{16\alpha^2}{\gamma}(1-g_{n^{\gamma}})\ln{(n^{\gamma})}<\e$; more than this, for every $n\geq n_3$:
\begin{align*}
    I_3\leq \frac{8\alpha}{n}\frac{n}{2}\frac{4\alpha}{\gamma} (1-g_{n^{\gamma}})\ln{(n^{\gamma})}<\e.
\end{align*} To finish the proof of the lemma take $n_0=\max\{n_1,n_2,n_3\}$.
\end{proof}

\subsubsection{Proof of Proposition \ref{Prop3}}
\label{subsubsection:ProofofProp3}

 All the above propositions do not use the full strength of the Theorem's hypotheses that $\Ex{R^{1+p}}<\infty$, for some $p>0$. But here, in Lemma \ref{Lem:Gumbel-2}, the necessity of this condition will become evident. 

 \begin{Rmk}
     Note that the random variable $R$ with distribution $f(r)=\frac{1}{n\ln^{3}(n)}$ satisfies equation \eqref{eq:Gumbel-4} from Lemma \ref{Lem:Gumbel-1}, but does not have any greater moment. In particular, for every $\lambda>0$,  $\lim_{k\to \infty} f(k)k^{1+\lambda}\ln{k}=\infty$, and $R$ does not satisfy the Lemma \ref{Lem:Gumbel-2}. Consequentially Proposition \ref{Prop3} is not true for it. Moreover, the covering of the remaining points, will not have a direct connection to the coupon collector problem.
 \end{Rmk}

\begin{Lem}\label{Lem:Gumbel-2}
Take $R$ which satisfies the hypothesis in equation \eqref{eq:ContinuousGumbelCondition}, this is, exists $\lambda>0$ such that $\lim_{k\to \infty} f(k)k^{1+\lambda}\ln{k}=0$. Then there exists a $\beta_0=\beta_0(\lambda)$, where for every $\beta>\beta_0$, $C>0$, and for every sequence of sets $(K_n)_n$, with $K_n\in \mathcal{K}_{\beta}^n$ for every $n\in \N$, we have:
\begin{align*}
\lim_{n\to \infty} \P{\bigcup_{k=1}^{N(Cn\ln{n})}\{|\mathcal{O}_k\cap K_n|\geq 2\} }=0,
\end{align*} where $N(t)$ is the Poisson process in the line with rate $1$, used in the definition of the continuous time covering process.
\end{Lem}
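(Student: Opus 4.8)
The plan is to reduce the statement, by a union bound over the (random, Poisson-distributed) number of objects deposited before time $Cn\ln n$, to a single-object tail estimate for $R$, and then to read off a polynomial decay rate from hypothesis \eqref{eq:ContinuousGumbelCondition}. Set $M=N(Cn\ln n)$, a Poisson variable of mean $Cn\ln n$ that is independent of the i.i.d.\ sequence of objects $(\mathcal{O}_k)_k$, and write $A_k=\{|\mathcal{O}_k\cap K_n|\ge 2\}$. Using that $M$ is independent of $(\mathcal{O}_k)_k$ and that the events $A_k$ are i.i.d., I would first get
\begin{align*}
  \P{\bigcup_{k=1}^{M}A_k}\;\le\;\Ex{\sum_{k=1}^{M}\ind_{A_k}}\;=\;\Ex{M}\,\P{A_1}\;=\;Cn\ln n\,\P{A_1},
\end{align*}
so that everything reduces to bounding $\P{A_1}=\P{|\mathcal{O}_1\cap K_n|\ge 2}$.

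The key geometric input is that any two distinct points of $K_n$ lie at torus-distance strictly larger than $n^{\beta}$, so a single interval $\mathcal{O}_1$ of length $R_1$ can contain two of them only if $R_1>n^{\beta}$; hence $A_1\subseteq\{R_1>n^{\beta}\}\cap\bigcup_{x\in K_n}\{x\in\mathcal{O}_1\}$. I would then union-bound over $x\in K_n$, use $\P{x\in\mathcal{O}_1\mid R_1=r}\le r/n$, and use the elementary bound $|K_n|\le n^{1-\beta}$ (the consecutive cyclic gaps of $K_n$ each exceed $n^{\beta}$ and sum to $n$), to obtain
\begin{align*}
  \P{A_1}\;\le\;\sum_{x\in K_n}\P{R_1>n^{\beta},\,x\in\mathcal{O}_1}\;\le\;\frac{|K_n|}{n}\,\Ex{R\,\ind\{R>n^{\beta}\}}\;\le\;n^{-\beta}\,\Ex{R\,\ind\{R>n^{\beta}\}}.
\end{align*}

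For the remaining factor, note that \eqref{eq:ContinuousGumbelCondition} (or just \eqref{eq:ContinuousGumbelCondition2}, which it implies) gives $f(k)\le k^{-1-\lambda}$ for all $k\ge k_0(\lambda)$; a summation by parts then yields $\Ex{R\,\ind\{R>m\}}=m\,f(m+1)+\sum_{j>m}f(j)\le(1+\lambda^{-1})\,m^{-\lambda}$ for $m\ge k_0$, so with $m=n^{\beta}$ one gets $\Ex{R\,\ind\{R>n^{\beta}\}}\le C_{\lambda}\,n^{-\beta\lambda}$ for all large $n$. Combining the three displays,
\begin{align*}
  \P{\bigcup_{k=1}^{M}A_k}\;\le\;Cn\ln n\cdot n^{-\beta}\cdot C_{\lambda}\,n^{-\beta\lambda}\;=\;C\,C_{\lambda}\,n^{\,1-\beta(1+\lambda)}\ln n,
\end{align*}
which tends to $0$ precisely when $\beta>\tfrac{1}{1+\lambda}$. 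Thus the lemma holds with $\beta_0(\lambda):=\tfrac{1}{1+\lambda}$, which is strictly less than $1$ because $\lambda>0$, leaving the nonempty range $\beta\in(\beta_0,1)$; and $\beta_0$ does not depend on $C$, as required.

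The one point deserving care — and the place where the extra-moment assumption, absent from Lemma~\ref{Lem:Gumbel-1}, is genuinely used — is that the crude estimate $\P{|\mathcal{O}_1\cap K_n|\ge 1}\lesssim n^{-\beta}$ is too weak: fed into the first display it would demand $\beta>1$, which is impossible for $\beta\in(0,1)$. One must keep the restriction $\{R_1>n^{\beta}\}$, whose probabilistic cost is exactly the polynomial tail decay $\Ex{R\,\ind\{R>n^{\beta}\}}\lesssim n^{-\beta\lambda}$, and it is this extra factor that defeats the dangerous term $n\ln n$ coming from the mean number of objects. Everything else is a routine union bound and a standard tail computation.
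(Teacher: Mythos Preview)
Your proof is correct and lands on the same threshold $\beta_0(\lambda)=\tfrac{1}{1+\lambda}$ as the paper, via the same underlying mechanism: a union bound over the Poisson number of objects, together with the observation that $A_1\subseteq\{R_1>n^{\beta}\}$.

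The paper's argument is, however, considerably shorter. It stops at the crude inclusion $A_1\subseteq\{R_1>n^{\beta}\}$ and uses $\P{A_1}\le f(n^{\beta})$ directly; the thinned Poisson count of ``bad'' objects then has rate $Cn\ln n\cdot f(n^{\beta})$, and the substitution $m=n^{\beta}$ with $\beta>\tfrac{1}{1+\lambda}$ turns this into $f(m)m^{1/\beta}\ln m\le f(m)m^{1+\lambda}\ln m\to 0$, invoking hypothesis~\eqref{eq:ContinuousGumbelCondition} verbatim. Your refinement $A_1\subseteq\{R_1>n^{\beta}\}\cap\{\exists x\in K_n:\,x\in\mathcal{O}_1\}$, followed by a union bound over $K_n$, the cardinality estimate $|K_n|\le n^{1-\beta}$, and the summation-by-parts bound on $\Ex{R\ind\{R>n^{\beta}\}}$, all end up reproducing the same order $n^{-\beta(1+\lambda)}$ that $f(n^{\beta})$ already gives under the tail hypothesis. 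So the extra geometry buys nothing here, though your version has the mild advantage of making the role of the exponent $\beta(1+\lambda)$ completely explicit rather than hidden in a change of variables.
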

\begin{proof}[Proof of Lemma \ref{Lem:Gumbel-2}]
The probability that an object intercepts the set $K_n$ in two points or more is bounded by $f(n^{\beta})=\P{R>n^{\beta}}$. Therefore:
\begin{align*}
    \P{\bigcup_{k=1}^{N(Cn\ln{n})}\{|\mathcal{O}_k\cap K_n|\geq 2\} }\leq 1-e^{-Cf(n^{\beta})n\ln{n}}.
\end{align*} Now,  $\lambda$ satisfying the condition \eqref{eq:ContinuousGumbelCondition}, and $m=n^{(1+\lambda)^{-1}}$, it is true that:
\begin{align*}
    \lim_{n\to \infty} f(n^{(1+\lambda)^{-1}})n\ln{n}&=\lim_{m\to \infty} (1+\lambda)f(m)m^{1+\lambda}\ln{m}=0,
\end{align*} In this way, take $\beta>(1+\lambda)^{-1}$ to conclude the proof.
\end{proof}

To finish the proof, we need to understand  a simple connection with the Coupon collector. To state it, start by fixing a parameter $p\in(0,1)$, and a set $\{1,....,K\}$. Define a coupon collector of $\{1,....,K\}$ with a time change $p$, in the following way:  Consider a Poisson process with rate $1$, and for each point in the Poisson process, sample an independent Bernoulli with parameter $p$.  When the Bernoulli is equal to one, with probability $p$, take one of the possible $K$ points in the space uniformly. When the Bernoulli is equal to zero, with probability $(1-p)$,  do nothing. By the thinning argument of Poisson Point process, we can define $(\xi_k)_{k=1}^{K}$ as a set of independent exponential random variables with rate $\frac{p}{K}$, and, the time need to complete the space as: 
    \begin{align*}
        T_K^{\ell}=\max_{k=1,...,K}\{\xi_k\}.
    \end{align*} About this process, one may get that: 
\begin{Lem}\label{Lem:Gumbel-3}
Let $(Y_k)_k$ be a coupon collector of the set $\{1,...,K\}$ with time change $p\in (0,1]$, that may depend on $K$.  Then, set $T_K^{\ell}$ the time needed to take all the coupons, so
\begin{align*}
    \frac{p}{K}T_K^{\ell}-\ln{K}\overset{D}{\implies} \G(0,1)
\end{align*} when $|K|$ goes to infinity.
\end{Lem}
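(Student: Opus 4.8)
The plan is to prove Lemma~\ref{Lem:Gumbel-3} by an explicit computation of the distribution function of $T_K^\ell$ together with a change of variables, since the structure of the coupon collector with a time change is particularly transparent: by the thinning argument described just above the statement, $T_K^\ell = \max_{k=1,\dots,K}\xi_k$ where the $\xi_k$ are i.i.d.\ exponentials of rate $p/K$. First I would write, for $t\in\R$,
\begin{align*}
  \P{\frac{p}{K}T_K^\ell - \ln K \leq t} = \P{T_K^\ell \leq \frac{K}{p}(\ln K + t)} = \left(1 - \exp\left\{-\frac{p}{K}\cdot\frac{K}{p}(\ln K + t)\right\}\right)^K = \left(1 - \frac{e^{-t}}{K}\right)^K,
\end{align*}
valid for all $K$ large enough that $\ln K + t > 0$ and $e^{-t}/K < 1$. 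The right-hand side converges to $\exp\{-e^{-t}\}$ as $K\to\infty$ by the standard limit $(1-x_K/K)^K\to e^{-x}$ whenever $x_K\to x$ (here $x_K = e^{-t}$ is in fact constant in $K$). Since $\exp\{-e^{-t}\}$ is exactly the distribution function of $\G(0,1)$, which is continuous on all of $\R$, pointwise convergence of distribution functions gives convergence in distribution, establishing the claim.

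The one subtlety is that the time change parameter $p = p(K)$ is allowed to depend on $K$, so I should check that the computation above is unaffected by this dependence. Inspecting the display, the factor $p$ cancels exactly: $\P{T_K^\ell \le \frac{K}{p}s} = (1 - e^{-(p/K)(K/p)s})^K = (1-e^{-s})^K$ for any $s>0$, with no residual $p$-dependence whatsoever. Thus the distribution of $\frac{p}{K}T_K^\ell$ does not depend on $p$ at all, and the convergence holds uniformly in the choice of $p\in(0,1]$. (Intuitively, rescaling time by $1/p$ exactly compensates for thinning the Poisson clock at rate $p$.) This is the main — and essentially only — point requiring a moment's care; everything else is the classical coupon-collector fluctuation computation.

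To be slightly more careful about the convergence $(1-e^{-t}/K)^K \to \exp\{-e^{-t}\}$, I would note $K\ln(1-e^{-t}/K) = K(-e^{-t}/K + O(K^{-2})) = -e^{-t} + O(K^{-1}) \to -e^{-t}$, so the logarithm of the distribution function converges, hence so does the distribution function itself. I do not anticipate any real obstacle here; the lemma is a routine consequence of the extreme-value behavior of the maximum of i.i.d.\ exponentials, and its only role in the larger argument is to feed into Proposition~\ref{Prop3} via the coupling between the covering of a sparse set $K(n)$ and such a time-changed coupon collector.
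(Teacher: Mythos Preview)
Your proof is correct and follows essentially the same approach as the paper's: both compute the distribution function of $T_K^\ell$ via the representation $T_K^\ell=\max_k\xi_k$ with i.i.d.\ exponentials of rate $p/K$, obtain $(1-e^{-t}/K)^K$, and take the limit to the Gumbel cdf. Your treatment is in fact slightly more careful than the paper's, which dispatches the computation in two lines and merely remarks that the result does not depend on $p$; you spell out the cancellation of $p$ and the convergence of the logarithm explicitly.
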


The proof of the Lemma \ref{Lem:Gumbel-3} is located in Subsection \ref{subsec:Usefulprop} of the Appendix. With this result we can finish the finally proof proposition \ref{Prop3}. 

\begin{proof}[Proof of Proposition \ref{Prop3}]  
For each object used in the covering $\mathcal{O}=\{U+\Gamma_R\}$, define the truncated object at height $n^{\beta}$ as $\overline{\mathcal{O}}=\{U+\Gamma_{\min\{R,n^{\beta}\}}\}$. With the truncated objects, consider the truncated covering as $\overline{X}_t=\bigcup_{k=1}^{N(t)} \overline{\mathcal{O}}_k$. 

Fix a sequence $(K(n))_n$, where $K(n)\in \mathcal{K}_{\beta}^n$ for every $n>0$. The set $K(n)$ is composed of disjoint and sparse points, therefore, by construction $\overline{X}_t$ behaves as a coupon collector of $K(n)$ with time change $\mu g_{n^{\beta}}/n$. So define:
\begin{align*}
    T_{K(n)}^{\ell}=\inf{ \left\{t: K\subset \overline{X}_t \right\}} .
\end{align*} By applying Lemma \ref{Lem:Gumbel-3}, one may get:
\begin{align}\label{eq:Proofprop3.1}
     \frac{\mu g_{n^{\beta}}}{n} T_{K(n)}^{\ell}-\ln{|K|}\overset{D}{\implies} \G(0,1). 
\end{align} To finish the proof we are going to replace $T_{K(n)}^{\ell}$ with $T_{K(n)}$, and remove the term $g_{n^{\beta}}$ in the equation \eqref{eq:Proofprop3.1}. 

To prove that $T_K^{\ell}$ is indeed $T_{K(n)}$ with high probability, we are going to use Lemma \ref{Lem:Gumbel-2} to show that no large object will appear in the time scale needed for the covering. More particular, define the event of having a big object until time $t$:
\begin{align*}
    E_t=\bigcup_{k=1}^{N(t)} \{ \mathcal{O}_k\neq \overline{\mathcal{O}}_k\}.
\end{align*} 

Assume $t=2n\ln{n}$ to be a suitable value of $t$. By Lemma \ref{Lem:Gumbel-2}, the events $E_t$ have probability going to zero, this is:
\begin{align*}
    \lim_{n\to \infty} \P{E_{2nlog(n)}}=0. 
\end{align*} 

Rest to show that $T_{K(n)}^{\ell}$ is lower than $2n\ln{n}$ with high probability. For this, consider the following bounds: $|K|<n$ and $g_{n^{\beta}}<1$. For big values of $n$ in the limit of equation \eqref{eq:Proofprop3.1}, the following is true:
\begin{align}\label{eq:Gumbelboundfornonlazyandlazy}
    \P{ T_{K(n)}^{\ell}> \frac{2n\log(n)}{\mu}}<1-\exp\{-1/n\}. 
\end{align} In particular, the probability limit of the event in question is the limit conditioned  on the event that happens with probability one, and that turns out to be bounded by the probability of large objects existing, $E_t$. That is:
\begin{align*}
    \lim_{n\to \infty} \P{T_{K(n)}^{\ell} = T_{K(n)}}&= \lim_{n\to \infty} \P{T_{K(n)}^{\ell} = T_{K(n)}|T_{K(n)}^{\ell}<2n\ln{n}}\\
    &\geq \lim_{n\to \infty} 1-\P{E_{2n\ln{n}}}=1.
\end{align*} 

Finally, to remove the term $g_{n^{\beta}}$ from equation \eqref{eq:Proofprop3.1} using the bound in equation \eqref{eq:Gumbelboundfornonlazyandlazy}, we get that $T_{K(n)}$ is of order $n\log(n)$ and therefore by the condition \eqref{eq:Gumbel-4}:
\begin{align*}
    \frac{\mu}{n}T_{K(n)}\left(1-g_{n^{\beta}}\right)\overset{P}{\to} 0,
\end{align*} concluding  that: 
\begin{align*}
     \frac{\mu }{n} T_{K(n)}-\ln{|K|}\overset{D}{\implies} \G(0,1). 
\end{align*} As desired.
\end{proof}


\section{Compact Support Phase}\label{sec:compactphase}\noindent

Unlike Section \ref{sec:Gumbel}, the theorem proved here is a more general version of the theorem stated in the Introduction. The additional conditions make the theorem more general, but less straightforward to understand. Therefore, we have intentionally postponed these conditions until now.

\begin{Theo}[Compact Support Phase]
Let $f\in \mathrm{RV}_{-1}$ that satisfies for all $\beta\in(0,1)$ that: 
\begin{align}\label{eq:Star}
    &\limsup_{n\to \infty} \sup_{n^{\beta} \leq x \leq n}\frac{xf(x)}{nf(n)}=b(\beta)<\infty, \text{ and }\\
    \label{eq:Triangle}
    &\limsup_{n\to \infty} \frac{\sum_{i=1}^{n^{\beta}}f(i)}{f(n)n\ln{n}}=d(\beta)<\infty.
\end{align} Then, $(T_n f(n))_n$ is tight. Moreover, for every subsequence $(n_k)_k$ such that
\begin{align} \label{eq:ContinuousCompactResult}
    f(n_k)T_{n_k}\overset{D}{\implies} Y.
\end{align} The distribution $Y=Y(f,(n_k)_k)$ is a non degenerated distribution with compact support.
\end{Theo}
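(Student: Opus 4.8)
The three assertions of the theorem — tightness of $(f(n)T_n)_n$, compactness of the support of any subsequential limit, and non‑degeneracy — can be treated almost separately, and only the last one will use the hypotheses \eqref{eq:Star}--\eqref{eq:Triangle}.

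\textbf{Tightness.} This is immediate and uses only $f>0$. In the continuous‑time process, the arcs $\mathcal O_k$ that by themselves already cover the whole torus (those with $R_k\geq n$) arrive as a Poisson process of rate $\P{R\geq n}=f(n)$; hence $T_n$ is dominated by the first arrival of this process, so $f(n)T_n$ is stochastically dominated by $\mathrm{Exp}\{1\}$, uniformly in $n$. In particular any subsequential limit $Y$ already satisfies $\P{Y\geq z}\leq e^{-z}$.

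\textbf{Compact support.} Fix $M$ large enough that the Mandelbrot--Shepp process $\mathcal C_{M}$ of intensity $M(dx\otimes dy/y^{2})$ covers $\mathbb S^1$ almost surely; such $M$ exists by Shepp's covering criterion \cite{Sh}, and then the same holds for every parameter $\geq M$ by monotonicity. The point is that at time $M/f(n)$ the arcs of macroscopic length already do everything. After rescaling positions and radii by $n$, the arcs with $R\geq\varepsilon n$ form, for each fixed $\varepsilon>0$, a configuration of finitely many arcs whose law converges as $n\to\infty$ to $\mathcal C_{M}$ restricted to arcs of length $\geq\varepsilon$: this is a direct consequence of $f\in\mathrm{RV}_{-1}$, which makes the relevant intensities converge to $M(dx\otimes dy/y^{2})$. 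Since a site $j$ lies in a discrete arc $\mathcal O_k$ exactly when $j/n$ lies in the corresponding rescaled arc, these macroscopic arcs cover $\Z/n\Z$ precisely when the corresponding continuum arcs cover the grid $\{0,\tfrac1n,\dots,\tfrac{n-1}n\}$, and for a fixed finite family of macroscopic arcs the probability of the latter converges to the probability that these arcs cover all of $\mathbb S^1$. Hence, for every $\varepsilon>0$,
\[
\limsup_{n\to\infty}\P{T_n>\tfrac{M}{f(n)}}\;\leq\;\limsup_{n\to\infty}\P{\text{arcs with }R\geq\varepsilon n\text{ do not cover }\Z/n\Z}\;=\;1-\P{\mathcal C_{M}^{\geq\varepsilon}\text{ covers }\mathbb S^1},
\]
where $\mathcal C_{M}^{\geq\varepsilon}$ denotes $\mathcal C_{M}$ with the arcs shorter than $\varepsilon$ discarded. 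A routine argument — almost surely every arc endpoint of $\mathcal C_{M}$ lies in the interior of another arc (endpoints are uniform and independent, and the remaining arcs still cover with full measure), so that $\mathbb S^1$ is in fact covered by the arcs longer than some random $\varepsilon>0$, by compactness of $\mathbb S^1$ — shows $\P{\mathcal C_{M}^{\geq\varepsilon}\text{ covers }\mathbb S^1}\to1$ as $\varepsilon\downarrow0$. Letting $\varepsilon\downarrow0$ gives $\limsup_n\P{f(n)T_n>M}=0$; running the argument for every parameter $\geq M$ yields $\P{Y>M}=0$ for every subsequential limit $Y$, so $Y$ is supported on $[0,M]$.

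\textbf{Non‑degeneracy.} It remains to exclude that $Y$ is a.s. constant, and here \eqref{eq:Star}--\eqref{eq:Triangle} enter. On one hand, the giant‑arc observation gives $\P{T_n\leq c/f(n)}\geq1-e^{-c}$, hence $\P{Y\leq c}\geq1-e^{-c}>0$ for every $c>0$. On the other hand I would show that for $c>0$ small enough (depending only on the constants $b(\cdot),d(\cdot)$) there is $\delta_0>0$ with $\P{T_n>c/f(n)}\geq\delta_0$ for all large $n$. Writing $\mathcal V=\Z/n\Z\setminus X_{c/f(n)}$, the computation in the proof of Lemma \ref{Lem:Probability2} gives $\Ex{|\mathcal V|}=n\exp(-c\,h(n))$ and $\P{x,y\in\mathcal V}=\exp\!\big(-\tfrac{c}{nf(n)}(S_{k}+S_{n-k})\big)$ for $k$ the cyclic distance of $x,y$, where $h(n)=\tfrac1{nf(n)}\sum_{i\leq n}f(i)$ and $S_m=\sum_{i\leq m}f(i)$; from \eqref{eq:Star}--\eqref{eq:Triangle} one checks $h(n)=O(\ln n)$, so $\Ex{|\mathcal V|}\geq n^{\eta}\to\infty$ once $c$ is small, and a second‑moment estimate $\Ex{|\mathcal V|^{2}}\leq C\,\Ex{|\mathcal V|}^{2}$ — in which \eqref{eq:Star} controls the contribution of far‑apart pairs of sites and \eqref{eq:Triangle} that of nearby ones — then gives $\P{|\mathcal V|>0}\geq C^{-1}=:\delta_0$ by Paley--Zygmund. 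Picking a continuity point $c''\in(0,c)$ of $Y$ yields $\P{Y>c''}\geq\delta_0>0$ together with $\P{Y\leq c''}>0$, so $Y$ is non‑degenerate. The main obstacle is the compact‑support step: transferring the continuum Mandelbrot--Shepp covering to the discrete torus is precisely the subtlety stressed in the introduction, and it forces one to pass a covering event through the weak limit of a finite macroscopic configuration and to interchange the limits $n\to\infty$ and $\varepsilon\downarrow0$; the most technical computation is the explicit second‑moment bound, which is where the exact form of \eqref{eq:Star} and \eqref{eq:Triangle} is used.
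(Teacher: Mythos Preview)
Your sketch is essentially correct, but it diverges from the paper's proof in both nontrivial steps.

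For the compact support, the paper does \emph{not} invoke the Mandelbrot--Shepp limit at all: it runs a self-contained $4$-ary branching argument (Proposition~\ref{Prop:Compactsupport}) directly on the discrete cylinder, showing that for $\alpha>4\ln 2$ a dominating homogeneous branching process dies a.s., hence the big arcs already cover. Your route --- pass to the weak limit of the macroscopic arcs and use Shepp's criterion that $\mathcal C_M$ covers $\mathbb S^1$ a.s.\ for $M>1$ --- is more conceptual and even yields the sharper bound $\mathrm{supp}(Y)\subset[0,1]$, but it imports Shepp's theorem and, as you note yourself, requires justifying that covering of $\mathbb S^1$ by the limiting finite configuration transfers back to covering of the grid by the pre-limit arcs. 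That step is fine (a finite arc family that covers $\mathbb S^1$ does so with positive overlap a.s., hence robustly under perturbation), but it is exactly the issue the paper is careful about, and the paper's branching argument sidesteps it entirely.

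For non-degeneracy, the paper again avoids a global second-moment computation: it splits objects into a big world $X_t[n^\beta,\infty)$ and a small world $X_t[1,n^\beta)$, uses a branching argument (Proposition~\ref{Prop:Bigobjectsnontrivialdistribution}, where \eqref{eq:Star} enters) to produce $n^{\eta(1-\beta)/2}$ vacant intervals of size $n^\beta$ separated by $n^\beta$, and then applies McDiarmid's inequality (Proposition~\ref{Prop:Smallobjectsnontrivialdistribution}, where \eqref{eq:Triangle} enters) to show the small world leaves at least one such interval with a hole. Your Paley--Zygmund argument on $|\mathcal V|$ is more elementary and does go through: \eqref{eq:Star} gives $f(i)\le (b(\beta)+o(1))\,nf(n)/i$ on $[n^\beta,n]$, which bounds the far-pair contribution to $\Ex{|\mathcal V|^2}$ by a convergent sum of the form $\tfrac1n\sum_k (n^2/k(n-k))^{cb(\beta)}$ once $cb(\beta)<1$; and \eqref{eq:Star}+\eqref{eq:Triangle} together give $h(n)=O(\ln n)$, which makes the near-pair contribution $O(n^{\beta+O(c)-1})\to 0$. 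The paper's two-scale argument is heavier but yields more (many well-separated vacant intervals, used for the corollary that $Y$ has no atom at $0$); your second-moment bound is sufficient for the theorem as stated.
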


\begin{Rmk}
Hypotheses \eqref{eq:Star} and \eqref{eq:Triangle} look strange at first glance. To make it tangible to the reader fix $\beta\in (0,1)$, and consider the following examples:
\begin{itemize}
    \item Take $f(x)=1/x$. Therefore, the value appearing in condition \eqref{eq:Star} is trivially equal to one. Using the harmonic series, we can calculate the value appearing in condition \eqref{eq:Triangle} which is equal to $\beta$.
    \item Let $b\in \R$ and take $f(x)=\frac{\ln^b{x}}{x}$. About condition \eqref{eq:Star}, we have that:
    \begin{align*}
         \sup_{n^{\beta}\leq x \leq n}\left\{\frac{xf(x)}{nf(n)}\right\}&=  \sup_{\gamma \in (\beta,1)}\left\{\frac{n^{\gamma}f(n^{\gamma})}{nf(n)}\right\}=\sup_{\gamma \in (\beta,1)}\left\{\gamma^{b}\right\}.
    \end{align*} For condition \eqref{eq:Triangle}, let $C$ be some constant, then we get that:
    \begin{align*}
        \sum_{i=2}^{n^{\beta}}f(i)&<\int_2^{n^{\beta}+1} f(x)dx\\
        &=\begin{cases}
            \frac{\ln^{b+1}{(n^{\beta})}}{b+1}+C, &\text{ where } b\neq -1.\\
            \ln{\ln{(n^{\beta})}}+C, &\text{ if } b=-1.
        \end{cases} 
    \end{align*} In particular, for all $b>-1$ the value in condition \eqref{eq:Triangle} if finite, and for all $b\leq -1$ the condition \eqref{eq:Triangle} is not satisfied.

    \item To see a case where the condition \eqref{eq:Star} is not satisfied, let $\gamma\in (0,1)$ and take $f(x)=\frac{\exp\{-\log^{\gamma}(x)\}}{x}$. We have that $f\in \mathrm{RV}_{-1}$, and:
    \begin{align*}
        \sup_{n^{\beta}\leq x \leq n }\frac{xf(x)}{nf(n)}> \frac{\exp\{\log^{\gamma}(n)\}}{\exp\{\log^{\gamma}(n^{\beta})\}}=  \exp\{(1-\beta^\gamma)\log^{\gamma}(n)\}.
    \end{align*} That explodes when $n$ goes to infinity for all $\beta\in(0,1)$ fixed.
\end{itemize}
\end{Rmk}
\bigskip 

In order to prove Theorem \ref{teo:2}, we divided the proof into two subsections. Then, the conclusion follows immediately by applying Prokhorov's Theorem to the sequence $(T_n f(n))_n$. More precisely, the proof follows the following steps:
\begin{enumerate}
    \item Subsection \ref{subsec:compactsupport} proves that $(T_nf(n))_n$ is tight, and that any limit in distribution belongs to some compact $[0,a^*]$ with $a^*>0$. 
    \item Subsection \ref{subsec:nontrivialdistribution} uses a technique to prove that the limit distribution is not degenerate, that is, a probability distribution with support only at a single point. 
\end{enumerate}

To simplify the proof, let us give another definition for the continuous covering process that will come in handy. Consider $S=\left(\mathbb{Z}/n\mathbb{Z}\right) \times \Z_+$, then define a Poisson Point Process $(\Omega,\mathcal{F},\mathbb{P})$ on $S$ with rate $\Lambda_{t}=\mathrm{Unif}(\mathbb{Z}/n\mathbb{Z})\otimes dR$. Where $\Omega=\{ \mathrm{w}: \mathrm{w}=\sum_{i\in I} \delta_{(u_i,r_i)}, $ $\text{ s.t. }(u_i,r_i)\in S \, \text{ for all } i\in I, I<\infty\}$ is the state space, and $\mathcal{F}$ is the smallest $\sigma-$algebra that makes the evaluation measures $\{\mathrm{w}(A): A\subset S\}$ measurable.

This Poisson process is not artificial; indeed if we place a point $(U_k,R_k)\in S$ for every object $\mathcal{O}_k=\{U_k+\Gamma_{R_k}\}$, then the points placed have the same distribution as a Poisson process with rate $\Lambda_t$. To see whether the points imply a covering, define the projection function as
\begin{align*}
    \Pi: S \to &\mathcal{P}(\mathbb{Z}/n\mathbb{Z})\\
    (u,r) \mapsto &\{u,u+1,...,u+r\}\in \mathbb{Z}/n\mathbb{Z}.
\end{align*} With the projection $\Pi$ defined, given any configuration $\mathrm{w}=\sum_{i\in I} \delta_{(u_i,r_i)}$, one can recover the covering process $X_t$ using the configuration $\mathrm{w}$ as:
\begin{align*}
    X_t=X_t(\mathrm{w})=\bigcup_{i\in I }\Pi((u_i,r_i)). 
\end{align*}

\subsection{Compact support} \label{subsec:compactsupport} 

In the Gumbel phase, the typical objects exhibit small sizes in comparison to the torus. Here, however, the presence of objects that are comparable in size to the space itself becomes significant. To control their number and what these big objects cover, we use a Branching Process argument that can be found in the following proposition:

\begin{Prop} \label{Prop:Compactsupport}
Let $f\in \mathrm{RV}_{-1}$ . Then there exists $a^*>0$ such that
\begin{align*}
    \lim_{n\to \infty} \P{T_nf(n)\leq a^*}=1.
\end{align*}
\end{Prop}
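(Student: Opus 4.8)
The aim is to show that with probability tending to $1$, the torus $\mathbb{Z}/n\mathbb{Z}$ is entirely covered by time $a^*/f(n)$ for some fixed $a^* > 0$. The natural strategy is a multiscale/branching argument: first use the large objects (those of size comparable to $n$) to break the torus into finitely many uncovered gaps, each of size at most $n^{\beta_1}$ for some $\beta_1 < 1$; then repeat on each gap, using objects of size comparable to the gap, to produce gaps of size at most $n^{\beta_1\beta_2}$, and so on. After a bounded number of iterations (since $\beta_1\beta_2\cdots\beta_k \to 0$) all remaining gaps have size below any fixed threshold, and the remaining covering (of a bounded-size set, or alternatively of $o(n)$ sparse points) is then controlled directly. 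Condition \eqref{eq:Star} is exactly what guarantees that objects "of the scale of the current gap" are sufficiently abundant: the rate at which a point inside a gap of length $\ell$ is covered by an object of length in $[\ell^\beta, \ell]$ is, up to constants, $\ell f(\ell) \gtrsim n f(n)$ uniformly, so a time window of length $c/f(n)$ contains $\asymp n$ such objects and the branching does not die out. Condition \eqref{eq:Triangle} controls the contribution of the small objects that are irrelevant for this scale.

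\textbf{Key steps, in order.} First I would fix a chain of exponents $1 > \beta_1 > \beta_2 > \cdots$ with $\beta_1 \cdots \beta_k \to 0$, and a corresponding sequence of time increments $\Delta_1, \Delta_2, \dots$ with $\sum_j \Delta_j =: a^* < \infty$, each $\Delta_j$ a fixed multiple of $1/f(n)$. Second, I would analyze one generation: conditionally on the current collection of uncovered gaps $G_1, \dots, G_m$ (each of length $\le \ell$), during a time window of length $\Delta_j$ the objects whose intersection with a given gap has length $\ge \ell^{\beta_j}$ arrive as a Poisson process whose rate, by \eqref{eq:Star} and regular variation, is bounded below by $c\, n f(n) \ge c' \ell f(\ell)$; I would show that with probability $\ge 1 - e^{-c''}$ (uniformly in the gap), each such gap of length $\ell$ is, after this window, replaced by sub-gaps all of length $\le \ell^{\beta_j}$ — essentially a covering-by-random-arcs estimate on an interval, where one shows a single well-placed long arc leaves two short pieces, and iterating within the window finishes the job. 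Third, I would control the total number of gaps across all generations: each gap spawns a bounded (in fact, stochastically dominated by a fixed law with finite mean) number of children, so the branching process is subcritical-in-expectation after renormalizing, and a union bound over all $O(1)$-in-expectation gaps shows that the probability that \emph{any} gap fails to shrink at its generation is $o(1)$. Fourth, after $k_0 = k_0(\beta_\bullet)$ generations every surviving gap has length $\le n^{\beta_1\cdots\beta_{k_0}}$, which I can make smaller than, say, $\ln n$; at that point the set of uncovered points is sparse (spacing exceeds $n^{\beta}$ for a suitable $\beta$) and of size $o(n)$, and one more time window of length a fixed multiple of $1/f(n)$ — during which each point is hit at rate $\asymp n f(n)$, so expected $\asymp n$ hits per point — covers it all with probability $1 - o(1)$; alternatively one invokes a coupon-collector-type bound as in Section \ref{sec:Gumbel}. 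Summing the finitely many time increments gives the constant $a^*$.

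\textbf{Main obstacle.} The delicate point is the uniformity of the per-generation shrinkage estimate over gaps of \emph{all} scales $\ell \in [n^{\beta_{j+1}}, n^{\beta_j}]$ simultaneously, and over all $n$ large: one must convert the asymptotic statement \eqref{eq:Star} into a genuine uniform lower bound $\ell f(\ell) \ge c\, n f(n)$ valid for every gap that can actually occur at generation $j$, and then show that a Poisson number $\asymp n$ of "gap-scale" arrivals really does reduce a length-$\ell$ interval to pieces of length $\le \ell^{\beta_j}$ with overwhelming (exponentially small complement) probability — this is a quantitative interval-covering lemma that has to be proved with explicit constants so that the subsequent union bound over the branching tree closes. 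Controlling the interaction between scales — making sure that objects counted at one generation are not double-counted or that small objects (governed by \eqref{eq:Triangle}) do not spoil the independence of successive windows — is the other place where care is needed; using disjoint time windows for successive generations and the independence of the driving Poisson process across disjoint time intervals is what makes this manageable.
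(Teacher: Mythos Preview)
Your plan relies essentially on hypotheses \eqref{eq:Star} and \eqref{eq:Triangle}, but Proposition~\ref{Prop:Compactsupport} assumes only $f\in\mathrm{RV}_{-1}$. This is not a cosmetic issue: the bound $\ell f(\ell)\ge c\,n f(n)$ for $\ell\in[n^{\beta},n]$, which you identify as the engine of your per-generation shrinkage, is precisely condition~\eqref{eq:Star} and can fail for general $f\in\mathrm{RV}_{-1}$ (e.g.\ $f(x)=x^{-1}\exp\{-\log^{\gamma}x\}$ with $\gamma\in(0,1)$, where $n^{\beta}f(n^{\beta})/(nf(n))\to\infty$; the abundance goes the wrong way for a \emph{lower} bound on the probability of seeing such objects, and more importantly the rates at mesoscopic scales are not comparable to $1/f(n)$ in either direction). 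So as written your argument proves a weaker statement than the proposition.

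The paper's proof avoids this entirely by never descending below scales comparable to $n$. It builds a $4$-ary branching process indexed by intervals $I(i,h)=[n|i|/4^{h},n(|i|+1)/4^{h})$ for $h=0,1,\dots$, and declares a vertex to die if a single object from the region $R(\cdot,h)\subset\T_n\times[2n/4^{h},2n/4^{h-1})$ covers its interval. Because the relevant object lengths are $n/4^{h}$ with $h$ \emph{fixed}, the regular-variation hypothesis alone gives that each region is occupied with limiting probability $1-e^{-3\alpha/8}$, uniformly in $h$. One then compares with a homogeneous branching process on $\mathcal{T}_4$ having survival probability $e^{-\alpha/4}$ per vertex; for $\alpha>a^*=4\ln 2$ this is subcritical and dies by some fixed depth $h_0=h_0(\varepsilon)$ with probability $>1-\varepsilon$. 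The punchline you are missing is that extinction of this branching at depth $h_0$ already implies the \emph{entire} torus is covered by objects of radius $\ge 2n/4^{h_0}$ --- there is no residual microscopic covering to perform, no coupon-collector endgame, and hence no need for \eqref{eq:Star} or \eqref{eq:Triangle}. Your multiscale descent to gaps of size $n^{\beta_1\cdots\beta_k}\le\ln n$ is both unnecessary and, without the extra hypotheses, unjustified.
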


\begin{proof}[Proof of Proposition \ref{Prop:Compactsupport}] The proof of this proposition is bases on a comparison between the presence of large objects in the cover with a Branching Process. To properly define the comparison, we first need to define a set of regions in $S=\left(\mathbb{Z}/n\mathbb{Z}\right)\times \Z_+$, and a set of intervals on the torus. Then, through the vacant set, a relationship can be constructed between these objects. The essence of the proof lies on the following observation: If there is a vacant interval, and the region of object in $S$ capable of covering it entirely is empty, then we can divide the vacant interval into smaller sets, each set as child of the branching process will have a new independent chance to be cover or not by objects of another non explored region.  

Let $\mathcal{T}_4=(\mathbb{V}_4,\mathbb{E}_4)$ be a rooted tree in which every vertex has four decedents. Precisely, the set of vertex and edges are respectively:
\begin{align*}
    \mathbb{V}_4&=\left\{v(i,h)\middle|h\geq 0,\, i\in\{0,1,2,3\}^h\right\}, \text{ and}\\
    \mathbb{E}_4&=\left\{\left(v(i,h),v(i\times j, h+1)\right)\middle| v(i,h)\in \mathbb{V}_4,\, j\in \{0,1,2,3\}\right\}. 
\end{align*} For any vertex $v(i,h)\in \mathbb{V}_4$ where $i=(i_1,...,i_h)$, define its order as  $|i|=\sum_{j=1}^h i_j 4^j$, and set the following regions in $[0,n)\times (0,2n)$: 
\begin{align*}
    R(i,h)=\left[\frac{n|i|}{4^h},\frac{n(|i|+1)}{4^h}\right)\times\left[\frac{2n}{4^h},\frac{2n}{4^{h-1}}\right),
\end{align*} For the vertex $v(0,0)$, define the region $R(0,0)=\left[0,n\right)\times\left[2n,\infty\right)$. Associated with each region $R(i,h)$, define the interval $I(i,h)=\left[\frac{n|i|}{4^h},\frac{n(|i|+1)}{4^h}\right)\cap \Z$. See in Figure \ref{fig:2} a representation of the tree $\mathcal{T}_4$ side by side with the regions. 

\begin{figure}[htbp]
    \centering
    \includegraphics{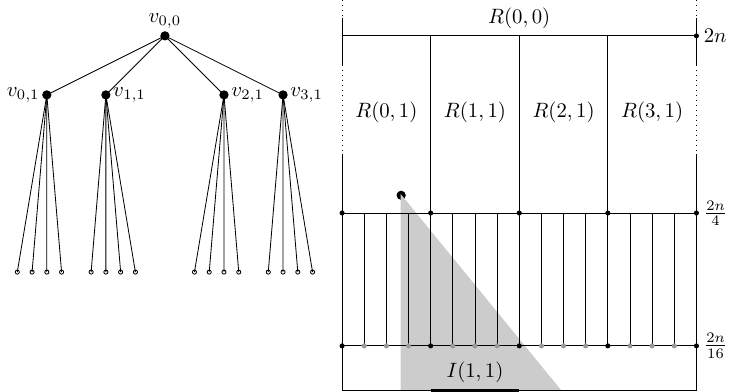}
    \caption{A representation side by side of the tree $\mathcal{T}_4$ and the set of intervals $R(i,k)$, on the rectangle $(0,1]\times (0,\infty)$.}
    \label{fig:2}
\end{figure}

Now, fix a covering process $X_t$ at time $t=\alpha f^{-1}(n)$, and along with it a configuration $\mathrm{w}$ with rate $\Lambda_t$. Then, we are going to define a heterogeneous branching process $(Z_h)_h$ in $\mathcal{T}_4$ using the regions $R(i,h)$, where $Z_0=1$, and associated with it, we have the vertex $v(0,0)$. For other values of $h$, define inductively
\begin{align*}
    Z_{h+1}= \sum_{i=1}^{Z_{h}} 4\cdot \ind\{\mathrm{w}(R(\left\{|v(i)|-1\right\}\mod{4^h},h)>0\},
\end{align*} where $\{v(1),...,v(Z_h)\}$ are the vertex associated to the $h-th$ generation of the branching process. Moreover, define the vertex associated for the next generation as the union of the four children of  $v\in \{v(i)\}_{i=1}^{Z_h}$ such that $\{\mathrm{w}(R(\{|v|-1\}\mod{4^h},h)>0\}$ happens.

The branching process $(Z_h)_h$, despite having a complex definition, was created to preserve one property: If it dies, it covers the space. In essence, notice two things: The intervals $I(i,h)$ fit inside $I(j,k)$ if and only if $v(i,h)$ is an ancestor of  of $v(j,k)$ (this is, $v(j,k)$ belongs to the unique path that connects $v(i,h)$ to the root of the tree); and, if $\{\mathrm{w}(R(i-1\mod{4^h},h))>0\}$ then $I(i,h)$ is completely covered by an object. In particular, if $\{Z_h=0\}$, each vertex in the tree $v(i,h)$ has a dead parent $v(j,k)$, or analogously, each interval $I(i,h)$ fits within some larger interval $I(j,k)$ such that $\{\mathrm{w}(R(j-1\mod{4^h},k))>0\}$. Therefore, $I(i,h)$ is covered for every $i$, and the space is fully covered by objects with a radius greater than $2n/4^h$.

The process is heterogeneous in probability, so we need caution when using classical branching arguments. To understand how the probability changes in each generation, fix a value of $n>0$.  Start by noticing that there is no object $\mathcal{O}$ of size smaller than one in the covering, then the regions $R(i,h)$ with $h>\left\lfloor\frac{\ln{2n}}{\ln{4}}\right\rfloor$ are always empty. About the dependence of the heterogeneous process, notice that by the Poisson construction, since the regions are disjoint, the survival probabilities despite being different are independent. Now, when $h\leq \left\lfloor\frac{\ln{2n}}{\ln{4}}\right\rfloor$, for every fixed region $R(i,h)$ by routine calculation:
\begin{align*}
    \P{\mathrm{w}(R(0,0))>0}&=1-\exp\left\{-\alpha\frac{f(2n)}{f(n)}\right\}\\
    \P{\mathrm{w}(R(i,h))>0}&=1-\exp\left\{-\frac{\alpha}{f(n)4^{h}}\left(f\left(\left\lceil\frac{2n}{4^{h-1}}\right\rceil\right)-f\left(\left\lfloor\frac{2n}{4^{h}}\right\rfloor\right)\right)\right\}.
\end{align*} To understand such values, since $f\in \mathrm{RV}_{-1}$, by the Representation Karamata's Theorem, see Proposition \ref{prop:RVresults} item $2$, it is true that
\begin{align*}
    f(r)=r^{-1}L(r),
\end{align*} where $L(r)$ is a slowly varying function. Then, for every fixed $h\geq 0$, by definition \eqref{eq:RV}, using the Proposition \ref{prop:RVresults} item $1$, we get that:
\begin{align}\label{eq:RV1prop1}
     \lim_{n\to \infty}& \frac{f(2n)}{f(n)}=\frac{1}{2}, \text{ and }\\
    \lim_{n\to \infty}& \frac{1}{{{4}}^{k}f(n)}\left(f\left(\frac{2n}{{{4}}^{k}}\right)-f\left(\frac{2n}{{{4}}^{k-1}}\right)\right)=\frac{3}{8} , \text{ and }\nonumber\\
    \lim_{n\to \infty}& \frac{1}{f(n)4^{h}}\left(f\left(\left\lceil\frac{2n}{4^{h-1}}\right\rceil\right)-f\left(\left\lfloor\frac{2n}{4^{h}}\right\rfloor\right)\right)=\frac{3}{8}\label{eq:RV2prop1}
\end{align} Therefore, the heterogeneous probability have a limit for each $h$ fixed using the equations \eqref{eq:RV1prop1} and \eqref{eq:RV2prop1} that relays just on the fact that $f\in \mathrm{RV}_{-1}$. In particular, taking $n$ to infinity, we get:
\begin{align}
    \lim_{n\to \infty} \P{\mathrm{w}(R(0,0))>0}&=1-e^{-\frac{\alpha}{2}}, \text{ and }\nonumber\\
    \lim_{n\to \infty} \P{\mathrm{w}(R(i,h))>0}&=1-e^{-3\alpha/8}.\label{eq:Limitprop2bound}
\end{align} 

Throughout the rest of the proof, define $\widehat{Z_t}$ a new homogeneous branching process that lives in the tree $\mathcal{T}_4$, and has all four children in one generation with probability equal to $e^{-\frac{2\alpha}{8}}$, that is, greater than the probability of both limits in the equation \eqref{eq:Limitprop2bound}. 
 
 About $\widehat{Z_t}$,  fix $\alpha^*(\widehat{Z_t})=4\ln{2}$, and notice that for every $\alpha>\alpha^*$ the branching process $\widehat{Z_t}$ in $\mathcal{T}_4$ dies almost surely.
 
 Now, fix $\alpha>\alpha^*$ and take any $\e>0$, set $h_0=h_0(\e,\alpha)$ such that 
 \begin{align*}
     \mathbb{P}_{\alpha}\left(\widehat{Z_h}=0, \text{ for some }h<h_0\right)>1-\e.  
 \end{align*}

  Finally, using this fixed value of $h=h(\e,\alpha)$ and the limits in equation \eqref{eq:Limitprop2bound}, find $n_0=n_0(h)$ such that for every $n>n_0$, the process $\widehat{Z}_h$ dominates the events $\ind\{\mathrm{w}(R(i,h))>0\}$ with $h<h_0$. In particular, remember that by construction, if the branching $Z_h$ dies, than the space is covered, therefore: 
\begin{align*}
    \P{T_nf(n)>\alpha}\geq \P{\widehat{Z_j}(\alpha)=0 \text{ for some }j\leq h_0}\geq 1-\e.
\end{align*}Taking the limit with $\e$ going to zero, one can conclude the theorem, for each $\alpha>\alpha^*=4\ln{2}$. 
\end{proof}

\subsection{Non degenerate distribution} \label{subsec:nontrivialdistribution} 

This section is devoted to the proof that $(f(n)T_n)_n$ has a nondegenerate limit. To prove this, it is sufficient to show that for small values of $\alpha$, we have:
\begin{align}\label{eq:ftnnotdegenerated}
    0<\liminf_{n\to \infty}\P{f(n)T_n>\alpha}\leq\limsup_{n\to \infty}\P{f(n)T_n>\alpha}<1.
\end{align} Different from Subsection \ref{subsec:compactsupport}, where a branching process technique shows that the space is covered by 
 objects with size comparable with the space; here, we need a more delicate approach that controls the full range of object sizes at the same time to say that the space is not covered.

Before going into more detail, let us prove the lower bound of equation \eqref{eq:ftnnotdegenerated}. Looking just at the objects greater than the space itself, we get that with $t=\alpha/f(n)$:
\begin{align*}
    \liminf_{n\to \infty} \P{f(n)T_n>\alpha}>\lim_{n\to \infty}\P{\mathrm{w}(\{r>n\})>0}=e^{-\alpha}. 
\end{align*} In particular, it is not zero.

The proof of the upper bound of equation \eqref{eq:ftnnotdegenerated} is divided in Propositions \ref{Prop:Bigobjectsnontrivialdistribution} and \ref{Prop:Smallobjectsnontrivialdistribution}. We start using a very similar branching argument from the proof of Proposition \ref{Prop:Compactsupport}, now for small values of $\alpha$. To define it, we will need a new tree and a new set of regions. 

  Let $\widehat{\mathcal{T}}=(\widehat{\mathbb{V}},\widehat{\mathbb{E}})\subset \mathcal{T}_4$ be a sub-graph where
\begin{align*}
    \widehat{\mathbb{V}}&=\left\{v(i,h)\middle|h\geq 0,\, i\in\{1,3\}^h\right\}, \text{ and}\\
    \widehat{\mathbb{E}}&=\left\{\left(v(i,h),v(i\times j, h+1)\right)\middle| v(i,h)\in \mathbb{V},\, j\in \{1,3\}\right\}. 
\end{align*} Observe the sub-graph  $\widehat{\mathcal{T}}$ of $\mathcal{T}_4$ in the Figure \ref{fig:3}. This restriction guarantees that any two siblings corresponds to separated intervals, and that distance will help with decoupling them. 

\begin{figure}[!ht]
    \centering
    \includegraphics{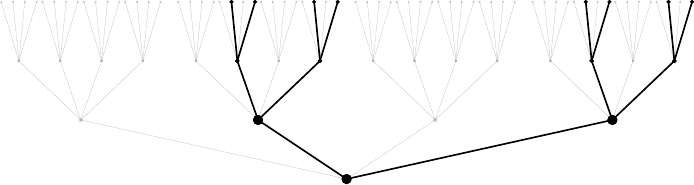}
     \caption{The black graph is a representative drawn of the sub graph $\hat{\mathcal{T}}$ inside the graph $\mathcal{T}_{{4}}$ drawn as white gray.}
    \label{fig:3}
\end{figure}
 
Using the graph $\widehat{\mathcal{T}}$, for every vertex $\widehat{v}(i,h)\in \widehat{\mathbb{V}}$ with $h>0$, define the region:
\begin{align*}
    \widehat{R}(i,h)=\left[\frac{n|i|}{4^h},\frac{n((|i|+2)}{4^h}\right)\times\left[\frac{n}{4^{h+1}},\frac{n}{4^{h}}\right).
\end{align*} For the vertex $v(0,0)$, define  $\widehat{R}(0,0)=\left[0,n\right)\times\left[\frac{n}{4},\infty\right)$. And, together with it, we set the intervals $\widehat{I}(i,h)=\left[\frac{n(|i|+1)}{4^h},\frac{n(|i|+2)}{4^h}\right)\subset  \mathbb{Z}/n\mathbb{Z}$. Observe such regions and intervals in Figure \ref{fig:4}. 

\begin{figure}[!ht]
    \centering
    \includegraphics{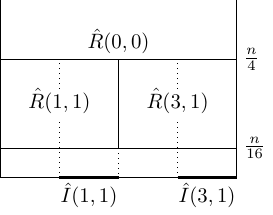}
    \caption{A representation of the regions $\hat{R}(i,h)$  and intervals $\hat{I}(i,h)$.}
    \label{fig:4}
\end{figure}

Fix now a covering process $X_t$ at time $t=\alpha f^{-1}(n)$, and together with it a configuration $\mathrm{w}$ with rate $\Lambda_t$. Define a heterogeneous branching process $(Z_h)_h$ in $\widehat{\mathcal{T}_4}$ using the regions $\widehat{R}(i,h)$, where $Z_0=1$, and associated with it, we have the vertex $v(0,0)$. For other values of $h$, define inductively
\begin{align*}
    Z_{h+1}= \sum_{i=1}^{Z_{h}} 2\cdot \ind\{\mathrm{w}(R(|v(i)|,h)=0\},
\end{align*} where $\{v(1),...,v(Z_h)\}$ are the vertex associated with the $h-th$ generation of the branching process. Moreover, define the vertex associated with the next generation as the union of the two children in $\widehat{\mathcal{T}_4}$ of each $v\in \{v(i)\}_{i=1}^{Z_h}$ such that $\{\mathrm{w}(R(|v|,h)=0\}$.  

To continue,, let us explore a relation between surviving in the branching process $(Z_h)_h$ and the covering process. Observe that $I(i,h)$ fits inside $I(j,k)$, if and only if $v(i,h)$ is an ancestor of  of $v(j,k)$. Also, notice that if $\{\mathrm{w}(\widehat{R}(i,h))=0\}$ then $I(i,h)$ does not intersect any object with radius between $n/4^{h+1}$ and $n/4^h$. Therefore, assuming that $\{Z_h>0\}$, each surviving vertex $v(i,h)$ has a family of survival ancestors (all vertex that belongs to the path that connects $v(i,h)$ to the root of the tree) . In other words, each interval $I(i,h)$ is not intersected by any object of size greater than $n/4^h$.

Unfortunately, the probability that the process survives in a generation $h$ does not behave well. The fact that the tail $f$ belongs to $\mathrm{RV}_{-1}$, controls only objects of size comparable to $n$, such as $n/4^h$ with $h$ fixed. In particular, to show that we do not cover the space, we need bounds to control objects of arbitrary size, as for example $n^{\beta}$ with $\beta\in (0,1)$. It is important to note that for some tails distributions, the rate of the regions $\widehat{R}(i,h)$ can explode as $h$ approaches $\left\lfloor\frac{\ln{n}}{\ln{4}}\right\rfloor$. 


To solve the heterogeneity problem, we divide the proof of the upper bound into two steps. First, we will explore the covering of objects with size greater than $n^{\beta}$. Second, we will work with the remaining objects, with sizes smaller than $n^{\beta}$. This division will be informally described by the terms big and small world.

More precisely, for any fix $\beta>0$, $t>0$, let $\mathrm{w}=\sum_{i\in I}\delta_{(u_i,r_i)}$ be any configuration of a Poisson Point Process in $S$ with rate $\Lambda_t$.  Define the small and big world to be respectively the processes:
\begin{align*}
X_t[1,n^{\beta})&=X_t[1,n^{\beta})(\mathrm{w})=\bigcup_{i\in I} \Pi((u_i,r_i))\ind\{r_i\in [1,n^{\beta})\}\\
X_t[n^{\beta},\infty)&=X_t[n^{\beta},\infty)(\mathrm{w})=\bigcup_{i\in I} \Pi((u_i,r_i))\ind\{r_i\in [n^{\beta},\infty)\}
\end{align*} Note that the covering  process $X_t$ can be written as $X_t[1,n^{\beta})\cup X_t[n^{\beta},\infty)$. And, by the thinning Poisson Theorem, we can work  with these two processes independently. 

The proof of the upper bound in relation \eqref{eq:ftnnotdegenerated} consists in taking $\alpha$ small and use Proposition \ref{Prop:Bigobjectsnontrivialdistribution} to find many empty regions of size $n^{\beta}$, using only the objects in the big world. Then, next in Proposition \ref{Prop:Smallobjectsnontrivialdistribution}, we will use a concentration bound to show that in one of theses vacant regions reveled by the big word,  we will have with high probability a empty set. In this way, the probability of covering the space does not approach one, when $\alpha$ is small. 

\begin{Prop} \label{Prop:Bigobjectsnontrivialdistribution}
Let $f$ be a distribution that satisfies condition \eqref{eq:Star} for some $\beta\in(0,1)$. Now, for all $\e>0$, and $\eta\in(0,1)$, we can find $\alpha_0=\alpha_0(\e,\beta,\eta)$ such that for all $\alpha<\alpha_0$, with probability greater than $1-\e$ there exist at least $n^{\frac{\eta}{2}(1-\beta)}$ intervals of size $\lfloor n^{\beta}\rfloor$ with mutual distance at least $\lfloor n^{\beta}\rfloor$ which are not intersected by the process $X_{\alpha/f(n)}[n^{\beta},\infty)$. 
\end{Prop}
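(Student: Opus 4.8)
The plan is to run a branching-process argument entirely inside the "big world" process $X_t[n^\beta,\infty)$, using the sub-tree $\widehat{\mathcal{T}}$ and the regions $\widehat R(i,h)$ introduced just above the statement, but stopping the recursion at a controlled generation $h^*$ chosen so that the intervals $\widehat I(i,h^*)$ have size of order $n^\beta$. The point of working with $\widehat{\mathcal{T}}$ rather than $\mathcal{T}_4$ is that siblings correspond to intervals separated by a gap of size $n|i|/4^h$ comparable to their own length, which gives the mutual-distance condition "for free" once we track which vertices survive. So the quantity we want to bound below is $Z_{h^*}$, the number of surviving vertices at generation $h^*$ of the heterogeneous branching process $(Z_h)_h$ on $\widehat{\mathcal{T}}$, and we want $Z_{h^*}\geq n^{\frac{\eta}{2}(1-\beta)}$ with probability $\geq 1-\e$ for $\alpha$ small.

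First I would fix $h^*=h^*(n,\beta)$ to be the largest $h$ with $4^h\leq n^{1-\beta}$, so $n/4^{h^*}$ is of order $n^\beta$ and the surviving intervals $\widehat I(i,h^*)$ have length $\lfloor n^\beta\rfloor$ (after trivial rounding), and there are $2^{h^*}\approx n^{(1-\beta)/2}$ vertices at that level. Next, for each region $\widehat R(i,h)$ with $h\leq h^*$, I would estimate $\P{\mathrm{w}(\widehat R(i,h))=0}=\exp\{-\alpha\lambda_{i,h}/f(n)\}$ where $\lambda_{i,h}$ is the $dR$-mass of the radius-slab $[n/4^{h+1},n/4^h)$; here hypothesis \eqref{eq:Star} is exactly what is needed, since it gives $\lambda_{i,h}/f(n)=\tfrac{1}{4^h}\big(f(n/4^{h+1})-f(n/4^h)\big)/f(n)\leq C\, b(\beta)$ uniformly over $0\le h\le h^*$ — this is the crucial uniformity that \eqref{eq:Star} buys us, in contrast to the $h$-fixed limits used in Proposition \ref{Prop:Compactsupport}. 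Consequently each region is empty with probability at least $e^{-C\alpha}$ for a constant $C=C(\beta)$ not depending on $h$ or $n$, so each surviving vertex independently produces two surviving children with probability at least $(e^{-C\alpha})^{?}$ — more precisely, conditionally on the parent surviving, the two children's slabs are disjoint from each other and from all previously examined slabs (by the Poisson thinning / disjointness of the $\widehat R(i,h)$), so $(Z_h)_h$ stochastically dominates a Galton–Watson process with offspring $2\cdot\mathrm{Bernoulli}(e^{-C\alpha})$-type law, whose mean offspring $2e^{-C\alpha}$ exceeds $3/2$ once $\alpha<\alpha_0$.

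Then I would conclude by a standard supercritical branching estimate: choosing $\alpha_0$ so that the dominating Galton–Watson tree has mean offspring $m>\sqrt{2}^{\,(1-\beta)/(1-\beta)}$... concretely $m>1$ with a fixed margin, the survival probability is bounded below by some $q=q(\alpha_0)>0$, and on survival, by the Kesten–Stigum/elementary second-moment bound the generation sizes grow at least like $m^h$ up to a positive random factor, so $\P{Z_{h^*}\geq (3/2)^{h^*}\text{ or extinction}}\to 1$; since $(3/2)^{h^*}\geq n^{c}$ with $c>\frac{\eta}{2}(1-\beta)$ for $\eta<1$, taking $\alpha_0$ even smaller makes the survival probability exceed $1-\e$. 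The mutual-distance claim follows because distinct surviving vertices at level $h^*$ sit in distinct sub-trees of $\widehat{\mathcal{T}}$ branching off at some common ancestor, and by the construction of $\widehat{\mathcal{T}}$ (children are the intervals $i=1$ and $i=3$ out of four) any two such intervals are separated by at least one full interval of the same or larger size, hence by at least $\lfloor n^\beta\rfloor$. The main obstacle I expect is bookkeeping the rounding (floors/ceilings in the slab endpoints and in $h^*$) while keeping all the bounds uniform in $h\le h^*$ — in particular verifying that \eqref{eq:Star} really does control $\lambda_{i,h}/f(n)$ uniformly up to the top generation, rather than only for $h$ fixed — and making the stochastic-domination-by-Galton–Watson step rigorous given the heterogeneity; the branching growth estimate itself is routine once that domination is in place. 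One should also double-check that requiring $n^{\frac{\eta}{2}(1-\beta)}$ rather than the full $2^{h^*}\approx n^{(1-\beta)/2}$ surviving intervals is what gives the slack to absorb the event of non-survival and the sub-exponential corrections, which is presumably why the statement has the factor $\eta$.
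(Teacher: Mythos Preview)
Your proposal is correct and follows essentially the same route as the paper's proof: the same sub-tree $\widehat{\mathcal{T}}$, the same regions $\widehat R(i,h)$, the same use of condition \eqref{eq:Star} to obtain a uniform lower bound $\P{\mathrm{w}(\widehat R(i,h))=0}\geq e^{-C\alpha}$ over all $h\leq h^*$, and then domination by a homogeneous supercritical Galton--Watson process with $0/2$ offspring, concluding via a Kesten--Stigum/Athreya--Ney type concentration result (the paper states and uses the latter as Theorem \ref{thm:Concentrationbranching}). The paper makes the constants explicit, choosing $\alpha_0$ so that $e^{-8\alpha b(\beta)}>2^{-1+\sqrt{\eta}}$, which yields mean offspring exceeding $2^{\sqrt\eta}$ and hence the required $n^{\frac{\eta}{2}(1-\beta)}$ descendants at level $h^*\approx (1-\beta)\log_4 n$; your informal ``mean $>3/2$'' would need to be sharpened along these lines, but you already flag this and correctly identify the role of $\eta$ as providing the slack.
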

\begin{proof}[Proof of Proposition \ref{Prop:Bigobjectsnontrivialdistribution}]  To prove this proposition, we will use condition \eqref{eq:Star} to create a bound on the branching process $\widehat{Z}_h$, for $h<\lceil\ln{n^{1-\beta}}/\ln{4}\rceil$. Next,  we will use the same method applied in the proof of Proposition \ref{Prop:Compactsupport} with a more specific branching theorem, witch regulates the number of children of the process. For this, take a vertex $\widehat{v}(i,h)\in\widehat{\mathbb{V}}$ and observe that:

\begin{align*}
    \P{\mathrm{w}(\widehat{R}(0,0))=0}&=\exp\left\{-\alpha\frac{f(n/4)}{f(n)}\right\}\\
    \P{\mathrm{w}(\widehat{R}(i,h))=0}&=\exp\left\{-\frac{2\alpha}{f(n)4^{h}}\left(f\left(\left\lceil\frac{n}{4^{h+1}}\right\rceil\right)-f\left(\left\lfloor\frac{n}{4^h}\right\rfloor\right)\right)\right\}.
\end{align*}

In particular, using condition \eqref{eq:Star}, we get that:
\begin{align*}
    \exp\left\{-\frac{2\alpha}{f(n)4^{h}}\left(f\left(\left\lceil\frac{n}{4^{h+1}}\right\rceil\right)- f\left(\left\lfloor\frac{n}{4^h}\right\rfloor\right)\right)\right\}
    &>\exp\left\{-\frac{2\alpha f\left(\left\lceil\frac{n}{4^{h+1}}\right\rceil\right)}{f(n)4^{h}}\right\}\\
    &= \exp\left\{-\frac{8\alpha \left\lceil\frac{n}{4^{h+1}}\right\rceil f\left(\left\lceil\frac{n}{4^{h+1}}\right\rceil\right)}{n f(n)}\right\}\\
    &>\exp\left\{-8\alpha b(\beta)\right\}
\end{align*} Then define $\widehat{Z}_h$ to be a homogeneous Branching process in $\widehat{\mathcal{T}}$ with parameter $e^{-8\alpha b}$ of having the two children. Observe that $\widehat{Z}_h$ dominates the branching process $Z_h$ for every $h\in\left[0,\lceil\ln{n^{1-\beta}}/\ln{4}\rceil\right)$, in that manner, we can couple both process in a way that if $\widehat{Z}_h$ survives, then $Z_h$ also survives.

Now, for every $\e>0$, and $\eta\in(0,1)$, there exists $\alpha_0=\alpha_0(\e,\eta,\beta)$ such that for all $\alpha<\alpha_0$ we have:
\begin{align*}
    \begin{cases}
        e^{-8\alpha b}> 2^{-1+\sqrt{\eta}}\\
        \P{\widehat{Z_j}=0, \text{ for some }j>0}\geq 1- \e/2.
    \end{cases}
\end{align*} In particular, with  probability close to one the process using just objects with size greater than $\lfloor n^{\beta}\rfloor$ does not cover some intervals of size $\lfloor n^{\beta}\rfloor$.  Finally, to know the number of such intervals we can use the concentration of the Branching process in \cite{AN}, stated for our case as
 \begin{Theo} \label{thm:Concentrationbranching}
 Let $X_{n,m}$ be independent and equally distributed positive integer random variables with $n>m>0$. Define the branching process as $Z_n=\sum_{m=1}^{Z_{n-1}} X_{n,m}$. If $\Ex{X_{1,1}}=\mu>1$, $\Var{X_{1,1}}=\sigma^2<\infty$ and $Z_0=1$, then exists a distribution $W$ such that:
 \begin{enumerate}
    \item $\frac{Z_n}{\mu^n}\to W$, almost sure. 
     \item $\lim\limits_{n\to \infty} \Ex{\left(\frac{Z_n}{\mu^n}-W\right)^2}=0$.
     \item $\Ex{W}=1$, and $\Var{W}=\frac{\sigma^2}{\mu^2-\mu}$.
     \item $\P{W=0}=q=\P{Z_n=0\text{ for some }n}$. 
 \end{enumerate}
 \end{Theo}
 
 Observe that the mean number of decedents in generation $h=\lceil\ln{n^{1-\beta}}/\ln{4}\rceil$ is equal to $(2e^{-8\alpha b})^h>n^{\frac{\eta}{2}(1-\beta)}$. By Theorem \ref{thm:Concentrationbranching}, since we are asking for the presence of significant less decedents than the mean, for every $\e>0$, exists a $n_0$ such that for $n>n_0$:
\begin{align*}
    \P{\widehat{Z}_h>n^{\frac{\eta}{2}(1-\beta)}}\geq 1-\e. 
\end{align*} This imply that, the process $\widehat{Z}_h$, that bounds bellow the number of intervals of size $\lfloor n^{\beta}\rfloor$ at height $h=\lceil\ln{n^{1-\beta}}/\ln{4}\rceil$ on the covering $X_t[n^{\beta},\infty)$, has more than $n^{\frac{\eta}{2}(1-\beta)}$ children with high probability. Since they have a distance between each other greater than $\lfloor n^{\beta}\rfloor$, the proof is finished using the coupling. 
\end{proof}

To finish the proof of the upper bound, let us use McDiarmid's concentration inequality, \cite{MC-Diarmid}, to give bounds over the small world when $\alpha$ is small.

\begin{Prop} \label{Prop:Smallobjectsnontrivialdistribution}
Let $f$ be a distribution that satisfies conditions \eqref{eq:Triangle} and \eqref{eq:Star} for some $\beta_0\in (0,1)$. Let $\eta\in(0,1/2)$, $\beta<\eta$. $\e>0$ and fix any set $\mathcal{I}$ formed by $n^{\eta}$ disjoint intervals of size $n^{\beta}$ that are spaced away from each other by at least $n^{\beta}$. Then there exists $\alpha'=\alpha'(\eta,\beta,\e)$ such that for all $\alpha<\alpha'$ with a probability greater than $1-\e$ the covering process $X_t[1,n^{\beta})$  does not cover $\mathcal{I}$ at time $t=\alpha/f(n)$.
\end{Prop}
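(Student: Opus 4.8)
My plan is to reduce the event that $\mathcal I$ gets covered to the event that a sparse, well–separated set of test points gets covered, and then apply McDiarmid's inequality — but only after discarding the objects that cannot possibly reach those points. Let $x_1,\dots,x_{n^\eta}$ be the midpoints of the intervals constituting $\mathcal I$, set $t=\alpha/f(n)$, and let $V=\#\{\,j:\ x_j\notin X_t[1,n^\beta)\,\}$. Since $\{\mathcal I\subseteq X_t[1,n^\beta)\}\subseteq\{V=0\}$, it suffices to show $\P{V=0}\to 0$ for small $\alpha$. The first ingredient is a mean estimate: the rate at which small–world objects hit a fixed site equals $\tfrac tn\sum_{r<n^\beta}r\,\P{R=r}\le \tfrac tn\sum_{i\le n^\beta}f(i)$, which by hypothesis \eqref{eq:Triangle} is at most $\alpha d(\beta)\ln n\,(1+o(1))$; hence $\P{x_j\ \text{vacant}}\ge n^{-\alpha d(\beta)+o(1)}$ and $\Ex{V}\ge n^{\eta-\alpha d(\beta)+o(1)}$, which diverges once $\alpha<\eta/d(\beta)$.

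The heart of the matter is the concentration step, where a direct application of McDiarmid to $V$ as a function of all $\asymp n/f(n)=n^{1+o(1)}$ objects is worthless: the bounded–difference sum would then be of order $n$, while $(\Ex{V})^{2}$ is only of order $n^{2\eta}$ and $2\eta<1$. The fix is to observe that a small–world object can cover $x_j$ only if it starts in the window $W_j=\{x_j-n^\beta+1,\dots,x_j\}$, and that the $W_j$ are pairwise disjoint because the intervals of $\mathcal I$ are $n^\beta$–separated; writing $\mathcal R=\bigcup_j W_j$, we have $|\mathcal R|=n^{\eta+\beta}(1+o(1))$ and $V$ is a function of the Poisson process restricted to $\mathcal R\times[1,n^\beta)$. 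The number $N$ of points of that restriction is Poisson with mean $\bar M=\alpha\,n^{\eta+\beta+o(1)}\to\infty$, and conditionally on $N=M$ those $M$ objects are i.i.d.\ (uniform start in $\mathcal R$, radius distributed as $R\mid R<n^\beta$). Each such object has length $<n^\beta$, whereas consecutive midpoints are more than $n^\beta$ apart, so it covers at most one of the $x_j$; replacing one object therefore changes $V$ by at most $2$, and McDiarmid gives $\P{V=0\mid N=M}\le 2\exp\{-\mu_M^{2}/(4M)\}$ with $\mu_M:=\Ex{V\mid N=M}$.

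To finish, I would rerun the mean computation conditionally: for $M\le 2\bar M$ the per–site hitting probability $p=\Ex{R\mid R<n^\beta}/|\mathcal R|$ satisfies $Mp\le c\,\alpha d(\beta)\ln n\,(1+o(1))$ — the factor $|\mathcal R|$ cancels between $M\asymp|\mathcal R|$ and $p\asymp 1/|\mathcal R|$ — so $\mu_M\ge n^{\eta-c\alpha d(\beta)+o(1)}$. Plugging this in, for $M$ in the typical range $[\bar M/2,2\bar M]$ the exponent $\mu_M^{2}/(4M)$ is at least $n^{\eta-\beta-c'\alpha d(\beta)+o(1)}/(c''\alpha)$, and this is exactly where $\beta<\eta$ is used: for $\alpha$ below some $\alpha'(\eta,\beta)$ the exponent $\eta-\beta-c'\alpha d(\beta)$ is positive, so $\P{V=0\mid N=M}\to0$ uniformly over that range. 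Combining this with $\P{N\notin[\bar M/2,2\bar M]}\le 2e^{-c\bar M}\to0$ from Poisson concentration gives $\P{V=0}\to0$, hence $\P{\mathcal I\subseteq X_{\alpha/f(n)}[1,n^\beta)}\to0$ for every $\alpha<\alpha'$, which is even stronger than the asserted $1-\e$ bound. The one genuinely delicate point is the passage from all objects to the relevant ones: it shrinks the McDiarmid variance proxy from $n$ to $n^{\eta+\beta}$, and it is precisely this gain of $n^{1-\eta-\beta}$ that converts a vacuous bound into a decaying one.
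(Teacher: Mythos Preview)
Your proof is correct and follows essentially the same route as the paper: restrict to the Poisson family of small-world objects that can actually reach $\mathcal I$ (of order $n^{\eta+\beta+o(1)}$), condition on their number lying in its typical range, and apply McDiarmid's inequality to conclude that the vacant count cannot drop to zero. The only variation is that you track the $n^{\eta}$ midpoints with bounded difference $2$, whereas the paper tracks all $n^{\eta+\beta}$ points of $\mathcal I$ with bounded difference $n^{\beta}$; both choices produce the same McDiarmid exponent $n^{\eta-\beta-O(\alpha)}$, so this is a cosmetic rather than a structural difference.
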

\begin{proof} [Proof of proposition \ref{Prop:Smallobjectsnontrivialdistribution}]
Start by fixing $\eta\in (0,1/2)$ and $\beta<\eta$. Fix $\mathcal{I}=\mathcal{I}(\eta,\beta)$ to be an arbitrary set of $n^{\eta}$ intervals of size $n^{\beta}$, which are separated by a distance of at least $n^{\beta}$. Let $\{\mathcal{O}_1,\mathcal{O}_2,...,\mathcal{O}_N\}$ all objects placed in the covering process $X_t[1,n^{\beta})$ until time $t=\alpha/f(n)$ that intercept any point in $\mathcal{I}$. 

 Define a function $F(\mathcal{O}_1,\mathcal{O}_2,...,\mathcal{O}_N)$ that corresponds to  the total number of non covered point in the set $\mathcal{I}$ by the objects $\{\mathcal{O}_1,\mathcal{O}_2,...,\mathcal{O}_N\}$. Since the objects have size less than $n^{\beta}$ we have that for every $i<N$, it is true that $\Delta_i F\leq n^{\beta}$, where:
\begin{align*}
    \Delta_i F= \sup_{l_1,...,l_i,l_i',l_{i+1},...,l_N} |F(l_1,...,l_i,l_{i+1},...,l_N)-F(l_1,...,l_i',l_{i+1},...,l_N)|.
\end{align*}

Notice that the mean of the function $F$ is the average number of missing point at time $t=\alpha f^{-1}(n)$, so: 
\begin{align}
    \Ex{F(\mathcal{O}_1,...,\mathcal{O}_N)}=\mu_F&=n^{\eta+\beta}\P{0\notin X_t[1,n^\beta)} \nonumber\\
    &=n^{\eta+\beta}\exp\left\{-\frac{\alpha}{nf(n)}\left(-n^{\beta}f(n^{\beta})+\sum_{i=1}^{n^{\beta}} f(i)\right)\right\} \nonumber \\
    &= n^{\eta+\beta}\exp\left\{\frac{\alpha n^{\beta} f(n^{\beta})}{nf(n)}-\alpha \ln(n) \frac{\sum_{i=1}^{n^{\beta}} f(i)}{nf(n)\ln{n}}\right\} 
    \label{eq:Mean}.
\end{align}

  Observe that $N$ is a Poisson random variable with mean equal to $\lambda$, and using the fact that the intervals in the set $\mathcal{I}$ are disjoint by a distance of at least $n^{\beta}$, we have that.  
\begin{align}
    \lambda&= n^\eta \frac{\alpha}{f(n)}\left(\sum_{i=1}^{n^{\beta}} \frac{f(i)-f(n^{\beta})}{n} + \sum_{i=1}^{n^{\beta}} \frac{(1-f(n^{\beta}))}{n}\right) \nonumber \\
    &=\frac{\alpha n^{\eta+\beta}}{nf(n)}\left((1-2f(n^{\beta}))+ \frac{\sum_{i=1}^{n^{\beta}} f(i)}{n^{\beta}} \right) \label{eq:324}
\end{align}  

To continue the proof, let $d=d(\beta)\in\R$ from the hypotheses \eqref{eq:Triangle},  $\delta$ small, and consider that there exists some constants $C_1,C_2>0$ such that $\lambda\leq C_1 n^{\eta+\beta}$ and $\mu_F\geq  C_2 n^{\eta+\beta-\alpha (d+\delta)}$. Then, that for every $c>0$, we have:
 \begin{align*}
     \P{|F-\mu_F|>2c\mu_F}\leq &\P{|F-\mu_F|>2c\mu_F, N< \lambda(1+\lambda^{-1/3})}\\ &+\P{|N-\lambda|\geq \lambda^{2/3}} .
 \end{align*}Since, $N$ Poisson distributed with rate $\lambda$, and $\lambda$ diverges, by the Chebyshev's inequality we have that $\P{|N-\lambda|\geq \lambda^{2/3}}$ converges to zero. To deal with the other term, notice that since $N<\lambda(1+\lambda^{-1/3})<2C_1 n^{\eta+\beta}$ for big values of $n$, we get using McDiarmid's inequality that: 

\begin{align*}
     \P{|F-\mu_F|>2c\mu_F, N<\lambda(1+\lambda^{-1/3})}&\leq 2\exp\{-2c^2 {\mu_F}^{2}/ \lambda(1+\lambda^{-1/3})n^{2\beta}\}, \\
        &\leq 2\exp{\left\{- \frac{c^2C_2^2}{2C_1^2} n^{\eta-\beta-2\alpha (d+\delta) }\right\}},
\end{align*} In particular, choosing $\delta$ small enough, for any choice of $\eta$ and $\beta$, for every $\e>0$, one can find $\alpha'=\alpha'(\eta,\beta,\delta,\e)$ such that the probability stays below $\e$ for every $\alpha<\alpha'$, and therefore:
\begin{align*}
    \P{\{X_t[1,n^{\beta})]\}^c \cap \mathcal{I}\neq \emptyset}>1-\e,
\end{align*} for every choice of set $\mathcal{I}(\beta,\eta)$. 

To finish the proof, we just need to show that indeed exists constants $C_1,C_2>0$ such that $\lambda\leq C_1 n^{\eta+\beta}$, and $\mu_F\geq  C_2n^{\eta+\beta-\alpha (d+\delta)}$.

To show that there exists $C_1>0$ such that $\lambda\leq C_1 n^{\eta+\beta}$, notice that by the hypotheses \eqref{eq:Triangle}, we get for every $n>n_0$:
\begin{align*}
     \frac{\sum_{i=1}^{n^{\beta}} f(i)}{n^{\beta}nf(n)}< \frac{(d+\delta)\ln{n}}{n^{\beta}}.
\end{align*} In particular, in equation \eqref{eq:324},  we get that $\lambda\leq C_1n^{\eta+\beta}$.

To finish the proof, in the equation \eqref{eq:Mean} we need do two considerations, first find $n_0(\delta)$ such that for every $n>n_0$, by hypotheses \eqref{eq:Triangle}, we get that:
\begin{align*}
     \left|d-\frac{\sum_{i=1}^{n^{\beta}} f(i)}{nf(n)\ln{n}}\right|<\delta,
\end{align*} and use condition \eqref{eq:Star} to get that $\frac{n^{\beta} f(n^{\beta})}{nf(n)}$ is bounded in the limit. 
\end{proof}

To prove the upper bound on the equation \eqref{eq:ftnnotdegenerated}, we need to combine the Proposition \ref{Prop:Bigobjectsnontrivialdistribution} with the Proposition \ref{Prop:Smallobjectsnontrivialdistribution}. Take any $\eta\in (0,1)$, then find $\beta$ small such that $\frac{\eta}{2}(1-\beta)>\beta$, and that satisfies conditions \eqref{eq:Star} and \eqref{eq:Triangle}. Then, using Proposition \ref{Prop:Bigobjectsnontrivialdistribution}, find $\alpha_0(\e,\beta,\eta)$, such that the probability of surviving $n^{\frac{\eta}{2}(1-\beta)}$ disjoint intervals of size $n^{\beta}$ is at least $1-\e/2$. Next, since $\frac{\eta}{2}(1-\beta)>\beta$, using Proposition \ref{Prop:Smallobjectsnontrivialdistribution}, one can find a new $\alpha_1(\e,\beta,\frac{\eta}{2}(1-\beta))$, such that with probability greater than $1-\e/2$ in the set of surviving intervals of Proposition \ref{Prop:Bigobjectsnontrivialdistribution}, there exists an empty point. Therefore, the probability of not covering the space when $\alpha<\min\{\alpha_1,\alpha_0\}$ is greater than $1-\e$, and that concludes the upper bound. 

Also, the same proof allow us to inform that the limit distribution $Y$ has no atom in zero, that is: 

\begin{Cor}
    For every $f\in \mathrm{RV}_{-1}$, which satisfies conditions \eqref{eq:Triangle} and \eqref{eq:Star}. We have: 
    \begin{align*} 
    \lim_{\alpha\to 0}\limsup_{n\to \infty}\P{T_nf(n)<\alpha}=0. 
    \end{align*}
\end{Cor}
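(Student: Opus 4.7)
The plan is to observe that this corollary is an immediate strengthening of the upper-bound argument already deployed for the right inequality in \eqref{eq:ftnnotdegenerated}; one merely needs to track how the threshold on $\alpha$ depends on the chosen tolerance $\e$. Since $\{T_n f(n) < \alpha\} \subset \{X_{\alpha/f(n)} = \mathbb{Z}/n\mathbb{Z}\}$, it is enough to produce, for every $\e > 0$, some $\alpha^*(\e) > 0$ with
\begin{equation*}
\limsup_{n \to \infty} \P{X_{\alpha/f(n)} = \mathbb{Z}/n\mathbb{Z}} \leq \e \quad \text{for all } \alpha < \alpha^*(\e),
\end{equation*}
and then to let $\e \to 0$.

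To construct $\alpha^*(\e)$, I would fix any $\eta \in (0,1)$ and then choose $\beta > 0$ small enough that $\tfrac{\eta}{2}(1-\beta) > \beta$ and that conditions \eqref{eq:Star}--\eqref{eq:Triangle} are available at this $\beta$. Applying Proposition \ref{Prop:Bigobjectsnontrivialdistribution} with tolerance $\e/2$ furnishes some $\alpha_0 = \alpha_0(\e/2,\beta,\eta)$ such that, for every $\alpha < \alpha_0$ and every sufficiently large $n$, with probability at least $1-\e/2$ the big world $X_{\alpha/f(n)}[n^{\beta},\infty)$ leaves uncovered a family of at least $n^{\eta(1-\beta)/2}$ disjoint intervals of length $n^{\beta}$, each pair separated by at least $n^{\beta}$. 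Next, applying Proposition \ref{Prop:Smallobjectsnontrivialdistribution} to any such family in the roles $\eta_{\text{new}} = \tfrac{\eta}{2}(1-\beta)$ and $\beta_{\text{new}} = \beta$ (which is permitted precisely because $\tfrac{\eta}{2}(1-\beta) > \beta$) yields some $\alpha_1 = \alpha_1(\tfrac{\eta}{2}(1-\beta),\beta,\e/2)$ such that, conditionally on that family, the small world $X_{\alpha/f(n)}[1,n^{\beta})$ fails to cover it with probability at least $1-\e/2$. By Poisson thinning the big and small worlds are independent, so a union bound gives the target inequality with $\alpha^*(\e) := \min\{\alpha_0,\alpha_1\}$.

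The only subtlety — which is not really an obstacle — is that in the second step one must condition on the random family produced by the big world; one has to remark that this family is measurable with respect to the thinned Poisson process carrying the arcs of size $\geq n^{\beta}$, and that the conclusion of Proposition \ref{Prop:Smallobjectsnontrivialdistribution} is uniform in the choice of family $\mathcal{I}$. Both facts are already explicit in the preceding subsection, so the argument is essentially a free consequence of the non-degeneracy proof, and letting $\e \to 0$ (hence $\alpha^*(\e) \to 0$) immediately delivers the corollary.
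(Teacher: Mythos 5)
Your proposal is correct and is essentially identical to the paper's own argument: the corollary is obtained exactly by re-running the combination of Propositions \ref{Prop:Bigobjectsnontrivialdistribution} and \ref{Prop:Smallobjectsnontrivialdistribution} with tolerances $\e/2$ each, choosing $\beta$ so that $\tfrac{\eta}{2}(1-\beta)>\beta$, and taking $\alpha<\min\{\alpha_0,\alpha_1\}$ before letting $\e\to 0$. Your remark about the uniformity of Proposition \ref{Prop:Smallobjectsnontrivialdistribution} over the choice of the family $\mathcal{I}$ and the independence of the two thinned worlds is exactly the (implicit) justification the paper relies on.
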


\section{Pre-exponential Phase}\label{sec:preexpphase}

Although it is possible to apply the same technique used in Section \ref{sec:compactphase} to prove Theorem \ref{teo:3}, we have decided to present an argument that introduces ideas for the next section. The ideas and techniques become more analytical and will need results about slowly varying functions presented in the appendix. 

Given any $p\in (-1,0)$ and $f\in \mathrm{RV}_p$, since $p$ different than $-1$, then by Karamata's theorem \ref{teo:Karamatastheorem} presented in the Appendix, one has that 
\begin{align} \label{eq:condpreexp}
    \lim_{n\to \infty} \frac{\sum_{i=1}^nf(i)}{nf(n)}=C_f>0.
\end{align}

The rest of the proof will show that for every $\alpha>0$, we have that:
\begin{align} \label{eq:Resultcontnpexp1}
    \limsup_{n\to \infty} \P{f(n)T_n\leq \alpha}&\leq 1-e^{-\alpha C_f},\text{ and }\\ 
    \liminf_{n\to \infty} \P{f(n)T_n\leq \alpha}&\geq 1-e^{-\alpha}+\left(1-\exp\left\{-\frac{\alpha}{2^{1+p}}\right\}\right)^2.\label{eq:Resultcontnpexp2}
\end{align} The proof of Theorem \ref{teo:3} follow by just applying Prokhorov's theorem, using the limits \eqref{eq:Resultcontnpexp1} and \eqref{eq:Resultcontnpexp2}.

The proof of \eqref{eq:Resultcontnpexp1} starts by observing that if $X_{\alpha/f(n)}$ does not cover the point $0$ at time $\alpha/f(n)$, then the space is not covered completely. So:  
\begin{align*}
    \P{f(n)T_n>\alpha}&\geq \P{0\notin X_{\alpha/f(n)}}\\
    &=\exp\left\{-\alpha \frac{\sum_{t=1}^n f(t)}{nf(n)}\right\}.
\end{align*} By the hypotheses \eqref{eq:condpreexp}, we have that 
\begin{align*}
   \liminf_{n\to \infty} \P{f(n)T_n>\alpha}&\geq \exp\left\{-\alpha C_f\right\}.
\end{align*} And, that is the upper bound in implication \eqref{eq:Resultcontnpexp1}.

\quad Now for the lower bound, define the following three disjoint regions in $\mathbb{Z}/n\mathbb{Z}\times \Z_+$. Similar to the branching process created in Section \ref{sec:compactphase}, those three regions correspond to the first children of the argument. 
\begin{align*}
   R_0&=\left\{(U,R):\, R\geq n\right\},\\
   R_1&=\left\{(U,R): U\in\left[0,\frac{n}{2}\right),\, R\in \left[\frac{n}{2},n\right)\right\},\\
   R_2&=\left\{(U,R): U\in\left[\frac{n}{2},n\right),\, R\in \left[\frac{n}{2},n\right)\right\}.
\end{align*} Notice that, we can cover the space if we hit an object in $R_0$, but also if $R_1$ and $R_2$ are occupied at the same time. Therefore: 
\begin{align*}
    \P{f(n)T_n\leq \alpha}&\geq \P{\mathrm{w}(R_0)>0}+ \P{\mathrm{w}(R_0)=0,\mathrm{w}(R_1)>0,\mathrm{w}(R_2)>0},
\end{align*} Now, using that $f\in \mathrm{RV}_p$, for $p\in (0,1)$, we can take the limit of this probabilities, as done in Proposition \ref{Prop:Compactsupport}. To get that: 
\begin{align*}
    \liminf_{n\to \infty} \P{f(n)T_n\leq \alpha}&\geq  1-e^{-\alpha}+e^{-\alpha}(1-e^{-\alpha/2^{1+p}})^2.
\end{align*} That finishing the proof of \eqref{eq:Resultcontnpexp2}.

\section{Exponential Phase}\label{sec:expphase}
\quad The proof of \ref{teo:4} is direct. We will show that with high probability the covering will occurs exactly when a big object appears. To show this, let  $f\in \mathrm{RV}_0$, then apply Karamata's theorem \ref{teo:Karamatastheorem} from the appendix to get that
\begin{align} \label{eq:continexponhypo}
    \lim_{r\to \infty} \frac{\sum_{i=1}^nf(i)}{nf(n)}=1.
\end{align}

Define the region $R_0=\{(U,R): R\geq n\}$, and observe that in time $\alpha/f(n)$, the number of objects in $R_0$, $N(R_0)$, is a Poisson random variable with rate $\alpha$. In this way, if we do not cover until time $\alpha/f(n)$ then $\{N(R_0)=0\}$ satisfies:
\begin{align*}
    \P{T_n>\frac{\alpha}{f(n)}}&\leq \P{N(R_0)=0}=e^{-\alpha}.
\end{align*} Now, if the point $0$ is not covered, naturally, we do not cover the entire space, so:
\begin{align*}
    \P{f(n)T_n>\alpha}&\geq \P{0\notin X_{\alpha/f(n)}}\\
    &=\exp\left\{-\alpha \frac{\sum_{t=1}^n f(t)}{nf(n)}\right\},
\end{align*} that by hypotheses \eqref{eq:continexponhypo}, we have that:
\begin{align*}
   \liminf_{n\to \infty} \P{f(n)T_n>\alpha}&\geq \exp\left\{-\alpha\right\}.
\end{align*} And, that concludes the convergence in distribution to the exponential random variable.

\section{Proof of Theorem \texorpdfstring{\ref{teo:2*} $\,$}{B* }} \label{sec:cylinder}
\noindent

In Section \ref{sec:compactphase}, a representation of the covering process in a cylinder $\mathbb{Z}/n\mathbb{Z}\times \Z$ appears in Propositions \ref{Prop:Compactsupport} and \ref{Prop:Bigobjectsnontrivialdistribution}, such representation makes evident several techniques used to understand the behavior of random coverage. After normalizing by $1/n$, the process in $\mathbb{Z}/n\mathbb{Z}\times \Z$ converges into a Poisson process across the continuous cylinder $\mathbb{S}^1\times[0,\infty)$ with a specific rate $\Lambda_\alpha$; however, it is important to note that the bare existence of such a limit is not sufficient to guarantee the convergence of the covering phenomena; indeed, the limit may deform or hide small objects essential to the covering in the discrete case, as discussed in Remark \ref{Rmk:SJ} for the Gumbel's phase.

This Section introduces and reviews some properties of a continuous covering model. The term continuous is used because we transition from working with a covering of the discrete torus $\mathbb{Z}/n\mathbb{Z}$ to performing a covering of the continuous circle $\mathbb{S}^1$.  It is important to clarify that the model and some of the results are not new in the literature; indeed, in 1972 a version of it was introduced by B.B.Mandelbrot in the seminal article \cite{BBM}, and in the same year the model was updated by L.A.Shepp in \cite{LAS}. Due to these two contributions, we will refer to it as the Mandelbrot-Shepp model. 

Both articles introduce the model on the real line, where the process has a regenerative property. In this paper, we focus on the circle; therefore, we need to adapt some of its proofs and definitions. For this reason, this section is divided into two parts.  Subsection \ref{subsec:Mandel-Brotdef}  aims to describe, define, and give some intuition behind the model on the circle. Subsection \ref{Subsec:proveoftheoremB*} is dedicated to solve the tightness issue of the limit, thus proving Theorem \ref{teo:2*}, which represents the novel contribution of this work showing that the discrete covering process converges in some sense to the Mandelbrot-Shepp model.

\subsection{The Mandelbrot-Shepp model} \label{subsec:Mandel-Brotdef}
\noindent

This Subsection defines the Mandelbrot-Shepp model and reviews some standard results about it. Although simple, such properties will be used directly or indirectly to prove Theorem \ref{teo:2*} in Subsection \ref{Subsec:proveoftheoremB*}.

 The Mandelbrot-Shepp model is defined as a covering process of the circle $\mathbb{S}^1=\R/ \mathbb{Z}$, represented here by the segment $[0,1)$. To define it, consider the cylinder $S=\mathbb{S}^1\times (0,\infty)$, fix $\alpha\geq 0$, and construct a Poisson point process in $S$ with rate $\Lambda_{\alpha}= \alpha dx\otimes\frac{dr}{r^2}$. Rigorously, consider the probability space $(\Omega,\mathcal{F},\mathbb{P}_{\alpha})$, where $\Omega=\{\omega=\sum_{i\in I} \delta_ {(x_i,y_i)}:  \,(x_i,y_i) \in S \, \forall i\in I, \text{ and } \omega(K)<\infty \,\forall K\subset S \text{ compact}\}$, and $\mathcal{F}$ is the smallest sigma algebra that makes the evaluation maps $\{\omega(K):K\text{ is compact in }S\}$ measurable.

 During the course of this article, configurations with different values of the parameter $\alpha$ will be compared. To simplify, define $\omega_{\alpha}$ as a configuration sampled with the measure $\Lambda_{\alpha}$, in particular, the parameter $\alpha$ is specified in the configuration notation. 
 
 To understand how different parameters interact, we should focus our attention on Lemma \ref{Lem:easycoupling}. The proof of the lemma is simple and was omitted from the paper. 
 
\begin{Lem} \label{Lem:easycoupling}
Fixed $\alpha,\beta>0$, and consider two independent configurations, that is, $\omega_{\alpha}=\sum_{i\in I} \delta_{(x_i,y_i)}$ and $\omega_{\beta}=\sum_{j\in J} \delta_{(x_j,y_j)}$ with intensities $\Lambda_{\alpha}$ and $\Lambda_{\beta}$ respectively.  Define: 
    \begin{align*}
        \omega_{\alpha}\cup \omega_{\beta}:= \sum_{k\in I\cup J} \delta_{(x_k,y_k)}.
    \end{align*} Then $\omega_{\alpha}\cup \omega_{\beta}$ has the same distribution as $\omega_{\alpha+\beta}$ with intensity $\Lambda_{\alpha+\beta}$.
\end{Lem}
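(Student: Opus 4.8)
The plan is to recognize Lemma \ref{Lem:easycoupling} as the superposition theorem for Poisson point processes, combined with the elementary fact that the intensities add in exactly the required way. Since $\Lambda_\alpha = \alpha\,\bigl(dx\otimes\frac{dr}{r^2}\bigr)$ is linear in the parameter, we have $\Lambda_\alpha+\Lambda_\beta = (\alpha+\beta)\,\bigl(dx\otimes\frac{dr}{r^2}\bigr) = \Lambda_{\alpha+\beta}$ as measures on $S=\mathbb{S}^1\times(0,\infty)$. Hence it suffices to show that the superposition of two independent PPPs with intensities $\Lambda_\alpha$ and $\Lambda_\beta$ is again a PPP, with intensity $\Lambda_\alpha+\Lambda_\beta$.

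First I would check that $\omega_\alpha\cup\omega_\beta$ is almost surely a legitimate element of $\Omega$: for every compact $K\subset S$ one has $(\omega_\alpha\cup\omega_\beta)(K)=\omega_\alpha(K)+\omega_\beta(K)<\infty$ a.s., so the superposition is locally finite; and since $\Lambda_\alpha$ (hence $\Lambda_\beta$) is non-atomic, the two configurations share no common point a.s., so $\omega_\alpha\cup\omega_\beta$ is simple and its representation as a sum of distinct Dirac masses is unambiguous.

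Next I would verify the two defining properties of a PPP with intensity $\Lambda_{\alpha+\beta}$. Given disjoint measurable $A_1,\dots,A_m$ of finite $\Lambda_{\alpha+\beta}$-mass, the family $\bigl((\omega_\alpha\cup\omega_\beta)(A_j)\bigr)_{j=1}^m = \bigl(\omega_\alpha(A_j)+\omega_\beta(A_j)\bigr)_{j=1}^m$ is independent, because $(\omega_\alpha(A_j))_j$ and $(\omega_\beta(A_j))_j$ are each independent families and the two vectors are mutually independent; moreover each coordinate is a sum of two independent Poisson variables with means $\Lambda_\alpha(A_j)$ and $\Lambda_\beta(A_j)$, hence Poisson with mean $\Lambda_{\alpha+\beta}(A_j)$. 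These two properties characterize the law of the PPP with intensity $\Lambda_{\alpha+\beta}$, which proves the lemma. Equivalently, and perhaps more cleanly, one compares Laplace functionals: for nonnegative measurable $g$,
\[
\mathbb{E}\bigl[e^{-\int g\, d(\omega_\alpha\cup\omega_\beta)}\bigr]
=\mathbb{E}\bigl[e^{-\int g\, d\omega_\alpha}\bigr]\,\mathbb{E}\bigl[e^{-\int g\, d\omega_\beta}\bigr]
=\exp\Bigl\{-\!\int(1-e^{-g})\,d\Lambda_\alpha\Bigr\}\exp\Bigl\{-\!\int(1-e^{-g})\,d\Lambda_\beta\Bigr\}
=\exp\Bigl\{-\!\int(1-e^{-g})\,d\Lambda_{\alpha+\beta}\Bigr\},
\]
and the Laplace functional determines the distribution.

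The only point that deserves attention is the measure-theoretic bookkeeping: $\Lambda_\alpha$ has infinite total mass (it blows up near $r=0$), so all statements about Poisson counts must be read on sets of finite intensity, i.e.\ on compact subsets of $S$ where $r$ is bounded away from $0$; this is exactly the local finiteness that membership in $\Omega$ requires, so no genuine difficulty arises. I do not expect any substantive obstacle — the lemma is a repackaging of a standard fact, which is why the paper states it without proof.
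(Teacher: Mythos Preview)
Your argument is correct: this is exactly the superposition theorem for Poisson point processes together with the linearity $\Lambda_\alpha+\Lambda_\beta=\Lambda_{\alpha+\beta}$, and your Laplace-functional computation (or the equivalent check on disjoint-set counts) is the standard way to verify it. The paper omits the proof entirely, calling it simple, so there is nothing to compare against beyond noting that your write-up supplies precisely the routine verification the authors left to the reader.
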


  The Lemma \ref{Lem:easycoupling} allows us to think of the parameter $\alpha$ of the model as a time, and, as time passes, we place more and more objects on the cylinder $S$. Whenever we work with this type of construction between times $\alpha$ and $\beta$, we define the measure $\mathbb{P}=\mathbb{P}_{\alpha}\otimes \mathbb{P}_{\beta}$, where each configuration is independent of each other.  
 
 With the configurations defined, it is now possible to introduce the covering perspective of the process. Given a point $\xi=(x,y)\in S$, define the \textbf{Projection function} of $\xi$ as
\begin{align*}
   \Pi(\xi)=
    \begin{cases}
    [0,1), &\text{if } y>1.\\
    (x,x+y), &\text{if }y\leq1 \text{ and }x+y\leq 1.\\
    (x,1)\cup[0,x+y-1), &\text{if }y\leq1 \text{ and }x+y> 1.
    \end{cases}
\end{align*} Given any configuration  $\omega=\sum_{i\in I} \delta_{(x_i,y_i)}$ define:
\begin{align*}
    \mathcal{C}(\omega)&= \bigcup_{i\in I}\Pi( (x_i,y_i))\\
    \mathcal{V}(\omega) &=[0,1)\setminus \mathcal{C}(\omega),
\end{align*}to be respectively  the covered set  and  the vacant set of the Mandelbrot-Shepp model.  Whenever the configuration $\omega$ is fixed, or when no confusion arises, we denote by $\mathcal{V}$ and $\mathcal{C}$ those random sets. 
 
\begin{Lem} \label{Lem:coverpinD}
Given any parameter $\alpha>0$, and any point $z\in[0,1)$. Then, $\Pa{z\in \mathcal{C}}=~1$.  
\end{Lem}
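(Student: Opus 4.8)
The plan is to show that a single point $z \in [0,1)$ is almost surely covered by exploiting the fact that arbitrarily small arcs accumulate near $z$ and, because the rate $\frac{dr}{r^2}$ has infinite mass near $r=0$, infinitely many of them land in any neighborhood. More precisely, by translation invariance of the rate $\Lambda_\alpha$ we may take $z = 0$. For a fixed $\delta \in (0,1)$, consider the region $A_\delta = \{(x,y) : x \in (1-\delta, 1),\ y \in (0,\delta),\ x + y > 1\}$, i.e. arcs starting just to the left of $0$ that are long enough to wrap around and cover $0$. The Poisson mass of this region is $\Lambda_\alpha(A_\delta) = \alpha \int\!\!\int_{A_\delta} dx\, \frac{dr}{r^2}$; since for each small $y$ the admissible $x$-interval has length $y$, this integral is $\alpha \int_0^\delta \frac{y}{y^2}\,dy = \alpha \int_0^\delta \frac{dy}{y} = +\infty$. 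Hence $\mathbb{P}_\alpha(\mathrm{w}(A_\delta) = 0) = e^{-\Lambda_\alpha(A_\delta)} = 0$, so with probability one there is at least one (in fact infinitely many) point of the configuration in $A_\delta$, and any such point projects to an arc containing $0$. Therefore $\Pa{0 \in \mathcal{C}} = 1$, and by translation invariance the same holds for every $z$.

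The key steps, in order, are: (1) reduce to $z=0$ by translation invariance of $\Lambda_\alpha$; (2) identify a region $A_\delta \subset S$ of configuration points whose projections all contain $0$ — the wrap-around arcs described above work, though one could equally use arcs $(x, x+y)$ with $x < 0 < x+y$ in the non-wrapped chart; (3) compute $\Lambda_\alpha(A_\delta) = \infty$ using that $\int_0^\delta dy/y$ diverges; (4) conclude from the void probability formula for Poisson processes that $A_\delta$ is occupied almost surely, which forces $0 \in \mathcal{C}$. The statement then follows since a countable-looking argument is not even needed — one divergent integral suffices.

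I do not expect any serious obstacle here: the only mild care needed is to set up the region $A_\delta$ so that its points genuinely project onto arcs covering $0$ (matching the cases in the definition of $\Pi$), and to confirm the measure of that region is infinite rather than merely positive — the latter already gives a positive probability of coverage, but the divergence upgrades it to probability one, which is what the lemma asserts. Everything else is a routine application of the structure of Poisson point processes on $S = \mathbb{S}^1 \times (0,\infty)$.
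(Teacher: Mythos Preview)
Your proposal is correct and follows essentially the same approach as the paper: identify a subregion of $S$ whose projections all contain the target point, and show its $\Lambda_\alpha$-mass is infinite so that the void probability vanishes. The only cosmetic difference is that the paper works directly with a general $z\in[0,1)$ and uses non-wrapping arcs starting in $[0,z)$ (bounding $\Lambda_\alpha(R_z)\ge \int_0^z\int_{z-x}^\infty \alpha y^{-2}\,dy\,dx=\int_0^z \alpha/(z-x)\,dx=\infty$), whereas you first reduce to $z=0$ by translation invariance and then use the wrap-around arcs in $A_\delta$; both computations hinge on the same divergence $\int_0 dy/y=\infty$.
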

\begin{proof}[Proof of Lemma \ref{Lem:coverpinD}]
Let $z\in [0,1)$ and define the region $R_z=\{x\in S: z\in \Pi(x)\}$. Computing the intensity of the Poisson process in $R_z$, we have:
\begin{align*}
    \Lambda_{\alpha}(R_z)=\int_{R_z} \frac{\alpha}{y^2} dydx\geq \int_{0}^z\int_{z-x}^{\infty} \frac{\alpha}{y^2} dydx= \int_{0}^z\frac{\alpha}{z-x}dx=\infty.
\end{align*} Therefore, for every $\alpha>0$ the event $\{\omega_{\alpha}(R_z)>1\}$ happens almost surely. In particular, $z$ is covered almost surely, concluding the proof.
\end{proof}

\begin{Cor}\label{Cor:noracional}
Given any parameter $\alpha>0$, and $\mathbb{Q}$ any enumerable set of points in $[0,1)$, we have
$\Pa{\mathbb{Q}\subset \mathcal{C}}=1.$
\end{Cor}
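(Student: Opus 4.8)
The plan is to derive this as an immediate consequence of Lemma~\ref{Lem:coverpinD} together with countable subadditivity of $\mathbb{P}_\alpha$. First I would enumerate the countable set as $\mathbb{Q}=\{z_1,z_2,\dots\}$ (if $\mathbb{Q}$ is finite the argument only simplifies). For each fixed index $k$, Lemma~\ref{Lem:coverpinD} applied to the point $z_k\in[0,1)$ gives $\Pa{z_k\in\mathcal{C}}=1$, equivalently $\Pa{z_k\notin\mathcal{C}}=0$. Here one should note that the event $\{z_k\in\mathcal{C}\}=\{\omega:\omega(R_{z_k})\geq 1\}$ is indeed measurable, being an evaluation event of the configuration on the region $R_{z_k}$ that already appeared in the proof of Lemma~\ref{Lem:coverpinD}.

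Next I would write $\{\mathbb{Q}\subset\mathcal{C}\}=\bigcap_{k\geq 1}\{z_k\in\mathcal{C}\}$, which is a countable intersection of measurable sets and hence measurable, and pass to the complement: $\{\mathbb{Q}\not\subset\mathcal{C}\}=\bigcup_{k\geq 1}\{z_k\notin\mathcal{C}\}$. Then a union bound finishes the argument:
\begin{align*}
    \Pa{\mathbb{Q}\not\subset\mathcal{C}}=\Pa{\bigcup_{k\geq 1}\{z_k\notin\mathcal{C}\}}\leq\sum_{k\geq 1}\Pa{z_k\notin\mathcal{C}}=0,
\end{align*}
so that $\Pa{\mathbb{Q}\subset\mathcal{C}}=1$, as claimed.

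There is essentially no obstacle in this proof; the only points that merit a remark are the two measurability observations above, both of which are routine. What is worth emphasising instead is what the argument does \emph{not} give: it cannot be upgraded to an uncountable set such as all of $[0,1)$, since the subadditivity estimate fails over uncountably many null events. This is precisely the tightness subtlety highlighted in the discussion preceding Theorem~\ref{teo:2*} --- covering every point of a fixed countable (e.g.\ rational) set with probability one does not imply covering the whole circle --- and it is exactly the gap that the proof of Theorem~\ref{teo:2*} must bridge.
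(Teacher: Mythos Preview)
Your proof is correct and follows exactly the intended approach: the paper states the corollary without proof, treating it as an immediate consequence of Lemma~\ref{Lem:coverpinD} via countable subadditivity, which is precisely what you spell out. Your added remarks on measurability and on why the argument cannot be pushed to uncountable sets are accurate and align with the paper's own discussion following the corollary.
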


As observed by Corollary \ref{Cor:noracional}, any enumerable set is almost surely covered by $\mathcal{C}(\omega)$. This observation might lead us to believe that the model is always fully covered. However, we need to be cautious in drawing such conclusions, since the circle $\mathbb{S}^1$ is not countable.

To show that the Mandelbrot-Shepp model presents a non-trivial covering, an argument similar to Proposition~\ref{Prop:Bigobjectsnontrivialdistribution} can be used, see the following Lemma:
\begin{Lem}\label{Lem:nontrivialcovering2}
    Given $\alpha< \frac{\ln(2)}{6}$, then $\Pa{\mathcal{V}\neq\emptyset}>0$.
\end{Lem}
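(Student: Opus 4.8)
The plan is to repeat, directly on the continuous cylinder $S=\mathbb{S}^1\times(0,\infty)$, the branching comparison used in the proof of Proposition~\ref{Prop:Bigobjectsnontrivialdistribution}, and then to read off an uncovered point as the common point of a surviving ray of nested intervals. First I would build a binary tree $\widehat{\mathcal{T}}$ of closed sub-intervals of $[0,1)$: the root carries $\widehat I(0,0)=[0,1)$, and a node $v$ at level $h$ carries an interval $\widehat I(v)$ of length $4^{-h}$ whose two children at level $h+1$ are the second and fourth of the four quarter-subintervals of $\widehat I(v)$ (so siblings are separated by a quarter-interval, each child interval together with its \emph{window} — the child interval extended to the left by one child-length — lies inside $\widehat I(v)$, and the windows of the two children are disjoint). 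To each node I attach a region of $S$: $\widehat R(0,0)=[0,1)\times[\tfrac14,\infty)$ for the root, and for $v$ at level $h\ge1$ the set $\widehat R(v)$ of pairs $(x,r)$ with $r\in[4^{-h-1},4^{-h})$ and $x$ in the window of $\widehat I(v)$. Two facts are needed: (i) if $\omega_\alpha(\widehat R(v))=0$, then $\widehat I(v)$ is disjoint from $\Pi((x,r))$ for every arc $(x,r)$ with $r$ in the length-range attached to level $h$, since any such arc meeting $\widehat I(v)$ must start in the window; and (ii) the regions $\{\widehat R(v)\}_{v\in\widehat{\mathcal{T}}}$ are pairwise disjoint in $S$ — regions at different levels are separated in $r$, and at a fixed level the windows are disjoint because they are nested inside the disjoint parent intervals — so by the independence property of the Poisson process the variables $\{\omega_\alpha(\widehat R(v))\}_v$ are independent.

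The second ingredient is the self-similarity of the intensity $\Lambda_\alpha=\alpha\,dx\otimes r^{-2}\,dr$. A direct computation gives $\Lambda_\alpha(\widehat R(0,0))=\alpha\int_{1/4}^{\infty} r^{-2}\,dr=4\alpha$ and, for every $v$ at level $h\ge1$,
\[
  \Lambda_\alpha(\widehat R(v))=\alpha\,(2\cdot 4^{-h})\int_{4^{-h-1}}^{4^{-h}} r^{-2}\,dr=6\alpha,
\]
independently of $h$. Hence the root region is empty with probability $e^{-4\alpha}$ and every other region is empty with probability $e^{-6\alpha}$, all independently. I would then define $Z_0=\ind\{\omega_\alpha(\widehat R(0,0))=0\}$ and, inductively, $Z_{h+1}$ as the number of children of the nodes counted by $Z_h$ whose region is empty. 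Conditionally on $\{Z_0=1\}$, an event of probability $e^{-4\alpha}>0$, the sequence $(Z_h)_h$ is a Galton--Watson process with offspring law $\mathrm{Binomial}(2,e^{-6\alpha})$ and mean $2e^{-6\alpha}$; this mean exceeds $1$ precisely when $\alpha<\tfrac{\ln 2}{6}$, so for such $\alpha$ the process is supercritical and $\Pa{Z_h>0 \text{ for all } h}>0$.

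Finally I would turn survival into an uncovered point. On the event $\{Z_h>0\text{ for all }h\}$ there is an infinite ray $v_0,v_1,\dots$ in $\widehat{\mathcal{T}}$ along which every region is empty; the closed intervals $\widehat I(v_h)$ are nested with diameters $4^{-h}\to0$, so they share a unique point $\xi\in\mathbb{S}^1$ with $\xi\in\widehat I(v_h)$ for all $h$. By fact (i), $\xi$ is covered by no arc of length in $[4^{-h-1},4^{-h})$ for any $h\ge1$, and (from the empty root region) by no arc of length in $[\tfrac14,\infty)$. Since $[\tfrac14,\infty)\cup\bigcup_{h\ge1}[4^{-h-1},4^{-h})=(0,\infty)$, the point $\xi$ is covered by no arc at all, i.e.\ $\xi\in\mathcal V(\omega_\alpha)$; therefore $\Pa{\mathcal V\neq\emptyset}\ge\Pa{Z_h>0\text{ for all }h}>0$.

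I expect the delicate point to be (ii) together with the exact bookkeeping that makes the length-ranges of consecutive levels tile $(0,\infty)$ — that is, checking that the windows of sibling nodes really stay inside the parent interval (so that independence is legitimate) and that no range of arc-lengths is skipped (so that a surviving ray pins down a genuinely uncovered point rather than one that merely escapes a sub-family of arcs). Everything else reduces to the self-similar intensity computation and the classical fact that a supercritical Galton--Watson process survives with positive probability.
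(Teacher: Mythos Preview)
Your proof is correct and follows essentially the same branching-process comparison as the paper's own argument: the same binary subtree $\widehat{\mathcal{T}}\subset\mathcal{T}_4$, the same regions of mass $4\alpha$ (root) and $6\alpha$ (all other nodes), and the same supercriticality threshold $2e^{-6\alpha}>1$. The only cosmetic differences are that the paper uses offspring law $2\cdot\mathrm{Bernoulli}(e^{-6\alpha})$ (checking the parent's region) rather than your $\mathrm{Binomial}(2,e^{-6\alpha})$ (checking each child's region), and it extracts the uncovered point by invoking Lemma~\ref{lem:Topology} instead of your explicit nested-intervals argument; both variants are equivalent.
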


The proof that the model is indeed a non-trivial covering is not new, and can be seen in \cite{BBM,LAS}. In order to make the argument complete, one can find in Appendix \ref{subsec:Usefulprop} the proof of the Lemma~\ref{Lem:nontrivialcovering2} based on the branching process technique used in Proposition \ref{Prop:Bigobjectsnontrivialdistribution}. Moreover,  in Proposition \ref{prop:sat} a better bound on the covering probability will be proved based on the Shepp seminal paper \cite{Sh}.

 \bigskip

The Mandelbrot-Shepp model is a non-trivial continuous covering process that uses infinitely many objects, whereas the discrete covering process uses only a finite number. The link between these models will be established through a finite truncated version of the Mandelbrot-Shepp model. Consequently, to prove Theorem \ref{teo:2*}, we must establish two crucial connections: first, the relationship between the Mandelbrot-Shepp model and its truncated version, and second, the connection between the latter and the discrete model.

Given a configuration $\omega=\sum_{i\in I} \delta_{(x_i,y_i)}$ and any real number $z>0$,  define the \textbf{truncated  configuration at height z} as
\begin{align}\label{eq:Truncatedconfiguraiton}
 \omega[z]=\sum_{i\in I} \delta_{(x_i,y_i)}\ind{\{y_i>z\} }.
\end{align} In essence, the configuration $\omega[z]$ is given by the points with height greater than $z$. The next lemma connects such configurations to the un-truncated model. 

\begin{Lem}\label{lem:Topology}
    For any parameter $\alpha>0$, we have 
    \begin{align*}
        \Pa{\mathcal{V}\neq \emptyset}=\Pa{\bigcap_{n=1}^{\infty} \left\{\mathcal{V}\left(\omega\left[\frac{1}{n}\right]\right)\neq \emptyset\right\}}.
    \end{align*}
\end{Lem}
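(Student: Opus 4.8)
The plan is to reduce the identity to a compactness argument carried out inside the genuinely compact space $\mathbb{S}^1$. One inequality is free: truncation only removes objects, so $\mathcal{V}(\omega)\subset\mathcal{V}(\omega[1/n])$ for every $n$, whence $\{\mathcal{V}\neq\emptyset\}\subset\bigcap_{n\geq1}\{\mathcal{V}(\omega[1/n])\neq\emptyset\}$ and therefore $\Pa{\mathcal{V}\neq\emptyset}\leq\Pa{\bigcap_{n}\{\mathcal{V}(\omega[1/n])\neq\emptyset\}}$. All the work goes into the reverse inequality.

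For that, I would first record that each truncated configuration $\omega[1/n]$ is almost surely finite: the number of its points is Poisson with mean $\Lambda_\alpha(\mathbb{S}^1\times(1/n,\infty))=\alpha\int_{1/n}^{\infty}r^{-2}\,dr=\alpha n<\infty$. Let $\Omega_0$ be the full-probability event on which $\omega[1/n]$ is finite for all $n\in\N$. On $\Omega_0$, $\mathcal{C}(\omega[1/n])$ is a finite union of open arcs (each $\Pi(\xi)$ being open in $\mathbb{S}^1$, or all of $\mathbb{S}^1$), hence open, so $\mathcal{V}(\omega[1/n])=\mathbb{S}^1\setminus\mathcal{C}(\omega[1/n])$ is closed, hence compact. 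Since $1/(n+1)<1/n$, the configuration $\omega[1/(n+1)]$ contains $\omega[1/n]$, so the sets $\mathcal{V}(\omega[1/n])$ are nested and decreasing in $n$; and because every point of $\omega$ has strictly positive height, $\bigcup_{n\geq1}\mathcal{C}(\omega[1/n])=\mathcal{C}(\omega)$, which gives the key identity $\bigcap_{n\geq1}\mathcal{V}(\omega[1/n])=\mathcal{V}(\omega)$.

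With these facts in hand, I would argue on the event $\Omega_0\cap\bigcap_{n\geq1}\{\mathcal{V}(\omega[1/n])\neq\emptyset\}$: there $(\mathcal{V}(\omega[1/n]))_{n\geq1}$ is a decreasing sequence of nonempty compact subsets of $\mathbb{S}^1$, so by Cantor's intersection theorem its intersection is nonempty; but that intersection equals $\mathcal{V}(\omega)$, so $\mathcal{V}\neq\emptyset$ on this event. Since $\Pa{\Omega_0}=1$, this yields $\Pa{\bigcap_{n}\{\mathcal{V}(\omega[1/n])\neq\emptyset\}}\leq\Pa{\mathcal{V}\neq\emptyset}$, and combining with the easy inequality proves the claimed equality.

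The only genuinely delicate point, and the one I would be most careful about, is the compactness step: one must work inside the truly compact $\mathbb{S}^1$ rather than the half-open coordinate window $[0,1)$, and one really needs the almost-sure finiteness of $\omega[1/n]$, since a countably infinite union of open arcs need not have closed complement, and if some $\mathcal{V}(\omega[1/n])$ failed to be compact the nested-intersection argument would collapse. Everything else is elementary bookkeeping about the projection $\Pi$ and the monotonicity in the truncation height.
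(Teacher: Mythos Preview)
Your proof is correct and follows essentially the same route as the paper: the easy inclusion $\{\mathcal{V}\neq\emptyset\}\subset\bigcap_n\{\mathcal{V}(\omega[1/n])\neq\emptyset\}$, then the almost-sure finiteness of $\omega[1/n]$, closedness (hence compactness in $\mathbb{S}^1$) of the truncated vacant sets, their nestedness, and Cantor's intersection theorem. Your write-up is in fact a bit more careful than the paper's, making explicit the Poisson mean $\alpha n$, the identity $\bigcap_n\mathcal{V}(\omega[1/n])=\mathcal{V}(\omega)$, and the need to work in the genuinely compact $\mathbb{S}^1$ rather than $[0,1)$.
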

\begin{proof}[Proof of Lemma \ref{lem:Topology}]
Fixed any $n>0$, note that: 
\begin{align*}
    \{\mathcal{V}\neq \emptyset\}\subseteq \left\{\mathcal{V}\left(\omega\left[\frac{1}{n}\right]\right)\neq \emptyset\right\}.
\end{align*} Therefore: 
\begin{align}\label{eq:Equalityinprob}
     \Pa{\mathcal{V}\neq \emptyset}\leq \Pa{\bigcap_{n=1}^{\infty} \left\{\mathcal{V}\left(\omega\left[\frac{1}{n}\right]\right)\neq \emptyset\right\}}.
\end{align}To prove the opposite inequality, we use a topological argument. Observe that the set of points in $\omega_{\alpha}\left[\frac{1}{n}\right]$ is almost surely finite. Moreover, since the projection function of any point is an open set, we have that $\mathcal{V}_{\alpha}\left(\omega\left[\frac{1}{n} \right]\right)$ is the complementary of a finite union of open set, thus it is almost surely closed in $\mathbb{S}^1$, i.e. compact. Finally, consider $m>n$, and notice that $\mathcal{V}\left(\omega\left[\frac{1}{m}\right]\right)\subset \mathcal{V}\left(\omega\left[\frac{1}{n}\right]\right)$, in particular, $\bigcap_n \mathcal{V}\left(\omega\left[\frac{1}{n}\right]\right)$  is the intersection of nested, compact sets of $\mathbb{S}^1$. Therefore, if all are non empty, there must be a point in the limit, and the space will not be covered at time $\alpha$. Proving then the equality in~\eqref{eq:Equalityinprob}.    
\end{proof}

Since the Mandelbrot-Shepp model has a non-trivial covering phenomenon, it is interesting to define the \textbf{Cover function} of the the space as
\begin{align*}
    \pi(\alpha)=\Pa{\mathcal{C}=[0,1)}.
\end{align*} Also, set \textbf{the Cover function at height $z$} as  
\begin{align*}
    \pi_z(\alpha)=\Pa{\mathcal{C}\left(\omega[z]\right)=[0,1)}.
\end{align*} 

 As a consequence of the Lemma \ref{lem:Topology}, the link between the truncated version and the Mandelbrot-Shepp model can be created. Note the following. 

\begin{Lem} \label{lem:continuityofprobabilitytruncated}
  For every $z>0$,  $\pi_z(\alpha)$ is a continuous function in $\alpha$ and $\lim\limits_{n\to \infty} \pi_{\frac{1}{n}}(\alpha)=\pi(\alpha)$.  
\end{Lem}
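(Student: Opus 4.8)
\textbf{Proof plan for Lemma \ref{lem:continuityofprobabilitytruncated}.}

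The plan is to prove the two assertions separately. For the limit $\lim_{n\to\infty}\pi_{1/n}(\alpha)=\pi(\alpha)$, I would translate the statement about covering into a statement about vacant sets: $\pi_z(\alpha)=1-\Pa{\mathcal{V}(\omega[z])\neq\emptyset}$ and $\pi(\alpha)=1-\Pa{\mathcal{V}\neq\emptyset}$. Then the desired limit is equivalent to $\lim_n \Pa{\mathcal{V}(\omega[1/n])\neq\emptyset}=\Pa{\mathcal{V}\neq\emptyset}$. Since the events $\{\mathcal{V}(\omega[1/n])\neq\emptyset\}$ are nested decreasing in $n$ (because $\omega[1/m]$ contains more points than $\omega[1/n]$ when $m>n$, so $\mathcal{V}(\omega[1/m])\subset\mathcal{V}(\omega[1/n])$), continuity of the measure along a decreasing sequence gives $\lim_n \Pa{\mathcal{V}(\omega[1/n])\neq\emptyset}=\Pa{\bigcap_n\{\mathcal{V}(\omega[1/n])\neq\emptyset\}}$, and this last quantity equals $\Pa{\mathcal{V}\neq\emptyset}$ by Lemma \ref{lem:Topology}. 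This part is essentially immediate once Lemma \ref{lem:Topology} is invoked.

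For the continuity of $\alpha\mapsto\pi_z(\alpha)$, the idea is to use the coupling of Lemma \ref{Lem:easycoupling}: for $\beta>\alpha$, realize $\omega_\beta$ as $\omega_\alpha\cup\omega_{\beta-\alpha}$ with the two pieces independent, and truncate at height $z$. Then $\mathcal{C}(\omega_\alpha[z])\subseteq\mathcal{C}(\omega_\beta[z])$, so $\pi_z$ is monotone non-decreasing, and
\begin{align*}
 0\le \pi_z(\beta)-\pi_z(\alpha)
 = \Pa{\mathcal{C}(\omega_\alpha[z])\neq[0,1),\ \mathcal{C}(\omega_\beta[z])=[0,1)}
 \le \P{\omega_{\beta-\alpha}[z](S)\ge 1},
\end{align*}
since for the covered set to change we need at least one new point of height $>z$. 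The number of points of $\omega_{\beta-\alpha}$ with height $>z$ is Poisson with mean $(\beta-\alpha)\Lambda_1(\{y>z\})=(\beta-\alpha)\int_z^\infty y^{-2}\,dy=(\beta-\alpha)/z$, which is finite for $z>0$; hence the right-hand side is $1-e^{-(\beta-\alpha)/z}\to 0$ as $\beta\downarrow\alpha$. This gives right-continuity, and the analogous estimate with the roles reversed (comparing $\pi_z(\alpha)$ with $\pi_z(\alpha-\delta)$) gives left-continuity, so $\pi_z$ is continuous.

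I do not anticipate a serious obstacle here, since the truncated model has only finitely many objects above height $z$ and the Poisson coupling controls exactly the ``extra'' objects that appear when $\alpha$ increases; the one point requiring a little care is that the estimate must be uniform enough to conclude genuine continuity rather than just right-continuity, which is why I would run the comparison symmetrically on both sides of $\alpha$. The lemma then follows by combining the two parts.
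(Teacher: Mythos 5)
Your proposal is correct and follows essentially the same route as the paper: the limit statement reduces, via the nested decreasing events $\{\mathcal{V}(\omega[1/n])\neq\emptyset\}$ and continuity of measure, to Lemma \ref{lem:Topology}; and continuity in $\alpha$ is obtained via the coupling of Lemma \ref{Lem:easycoupling}, bounding $|\pi_z(\beta)-\pi_z(\alpha)|$ by the probability that the independent "extra" configuration $\omega_{|\beta-\alpha|}$ places at least one point above height $z$, a Poisson event with finite mean $|\beta-\alpha|/z$. No gap.
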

\begin{proof}[Proof of Lemma \ref{lem:continuityofprobabilitytruncated}]
To prove that $\pi_z(\alpha)$ is a continuous function for every fixed $z>0$, define the region $R_z=[0,1)\times[z,\infty)$. Then:
\begin{align*}
    \Lambda_{\alpha}(R_z)=\alpha \int_0^1\int_{z}^{\infty}\frac{dydx}{y^2}=\frac{\alpha}{z}.
\end{align*} In particular, for every $\e>0$, by Lemma \ref{Lem:easycoupling}, one gets:
\begin{align*}
    |\pi_z(\alpha+\e)-\pi_z(\alpha)|&=\P{\mathcal{V}\left(\omega_{\alpha}[z]\right)\neq \emptyset, \mathcal{C}\left(\omega_{\alpha+\e}[z]\right)=[0,1)}\\
    &\leq \mathbb{P}_{\e}\left(\omega(R_m)>0\right)=1-e^{-\e/m}.
\end{align*} Implying that for every fixed $z>0$ the function $\pi_z(\alpha)$ is a right continuous function in $\alpha$. The proof of left continuity is analogous.  

To prove that $\lim_{n\to \infty}\pi_{\frac{1}{n}}(\alpha)=\pi(\alpha)$, it is sufficient to show that for any $\alpha>0$:
\begin{align*}
    \lim_{n\to \infty} \pi(\alpha)-\pi_{\frac{1}{n}}(\alpha)=0,
\end{align*} which is equivalent to
\begin{align*}
    \lim_{n\to \infty} \Pa{\mathcal{C}(\omega)=[0,1),\mathcal{V}\left(\omega\left[\frac{1}{n}\right]\right)\neq\emptyset}=0.
\end{align*} To finish the proof, note that by inclusion of the events, the limit is the intersection of them. In particular, it was already shown by  Lemma \ref{lem:Topology}, that the limit has zero probability, as we desired. 
\end{proof}

Thus, the Mandelbrot-Shepp process has a non-trivial covering function $\pi$ that can be studied by the approximation through the truncated versions $\pi_z$. Furthermore, by approximation, the vacant set $\mathcal{V}$ is also a limit of the vacant sets in $\omega[z]$. Then, to finish the proof, one will need to construct a link between the truncated spaces and the discrete model.

\subsection{Proof of Theorem \texorpdfstring{\ref{teo:2*} $\,$}{B* }}\label{Subsec:proveoftheoremB*}
\noindent

 The goal of this subsection is to prove Theorem \ref{teo:2*}. For this, we need to prove two points: The support of the distribution of covering in the Mandelbrot-Shepp model is $[0,1]$, and the limit of the covering probability in the discrete model converges to the function $\pi$. Since the proof of these points presents different arguments, we divide this subsection into two. In the first part, Subsection~\ref{Subsubsec:picompact}, the goal is to show that the function $\pi$ is not trivial in $[0,1]$, and equal to one above one. In the second part, Subsection~\ref{Subsubsec:pilimit}, the goal will be to find the limit of the vacant sets and the discrete covering probabilities. As a direct consequence of all the propositions presented here, Theorem \ref{teo:2*} will be derived. 

\subsubsection{Support of the function \texorpdfstring{$\pi$}{n}}\label{Subsubsec:picompact} \noindent

In $1956$, Dvoretzky in \cite{Dv} proposed another problem in the context of covering, which we now introduce. First, fix the space as $\mathbb{S}^1$, the circle with unit length, and fix a decreasing sequence $(\ell_n)_n$. At each time $k$, one samples a uniform point in the circle and places an arc starting at this point with length $\ell_k$. It was shown that if $\sum_n \ell_n=\infty$ each point in $\mathbb{S}^1$ is covered with probability one, but not necessarily $\P{\mathbb{S}^1 \text{ is fully covered}}=1$. Later, Shepp showed the necessary and sufficient condition in $1972$, in \cite{Sh}, described by:

\begin{Theo}[Shepp] \label{teo:Sheep}
Let $0<\ell_{n+1}\leq \ell_n\leq ...\leq \ell_2\leq \ell_1<1$, $n=1,2,...$, be arcs that are placed independently and uniformly on a circumference $\mathbb{S}^1$ of unit length. The union of these arcs covers $\mathbb{S}^1$ with probability one if and only if
\begin{align*}
    \sum_{n=1}^{\infty} n^{-2}\exp\{\ell_1+...+\ell_{n}\}=\infty.
\end{align*}
\end{Theo}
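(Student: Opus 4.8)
Since this is Shepp's classical theorem, the present paper only invokes it, but its proof can be organised as follows. The first move is soft and reduces the infinite covering to a covering by finitely many arcs: taking every arc to be an open sub-arc (the configurations in which this choice matters have probability zero), the vacant set $V_n:=\mathbb S^1\setminus(A_1\cup\cdots\cup A_n)$ after the first $n$ arcs is a decreasing sequence of compact subsets of $\mathbb S^1$, so $\bigcap_n V_n=\emptyset$ forces $V_n=\emptyset$ for some finite $n$. Consequently $\mathbb S^1$ is covered by the whole family iff it is covered by finitely many arcs, and
\begin{equation*}
\mathbb{P}\bigl(\mathbb S^1\text{ is covered}\bigr)=\lim_{n\to\infty}\mathbb{P}\bigl(A_1,\dots,A_n\text{ cover }\mathbb S^1\bigr),
\end{equation*}
the limit being monotone. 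It therefore suffices to analyse $q_n:=\mathbb{P}(A_1,\dots,A_n\text{ do not cover }\mathbb S^1)$ and its decreasing limit $q_\infty$, the theorem amounting to the statement that $q_\infty=0$ exactly when the series diverges.

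The second move is the exact combinatorial formula. Fixing one arc's position by rotational invariance and running inclusion--exclusion over which subsets of arcs leave an exposed gap (each exposed component has a well-defined leftmost point, which is the right endpoint of some arc), one obtains the Stevens-type identity
\begin{equation*}
\mathbb{P}\bigl(A_1,\dots,A_n\text{ cover }\mathbb S^1\bigr)=\sum_{S\subseteq\{1,\dots,n\}}(-1)^{|S|}\Bigl(1-\sum_{i\in S}\ell_i\Bigr)_+^{\,n-1},
\end{equation*}
which for $\ell_1=\cdots=\ell_n$ is the classical formula of Stevens; hence $q_n=\sum_{\emptyset\neq S}(-1)^{|S|+1}\bigl(1-\sum_{i\in S}\ell_i\bigr)_+^{\,n-1}$. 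Writing $u_k=\ell_1+\cdots+\ell_k$, the plan is then to reorganise this alternating sum --- using $(1-x)^{n-1}\approx e^{-nx}$ in the small-$x$ regime and handling the truncated terms $(1-\sum_{i\in S}\ell_i)_+$ with $\sum_{i\in S}\ell_i$ near $1$ separately --- so that, after a combinatorial identity, $q_\infty$ collapses to a closed expression of the form $\bigl(1+\sum_{k\ge 1}b_k\bigr)^{-1}$ (read as $0$ when the series diverges), where $b_k$ is built from the partial sums $u_k$ and obeys a two-sided bound $b_k\asymp k^{-2}e^{u_k}$. Granting this, $q_\infty=0$ iff $\sum_k b_k=\infty$ iff $\sum_k k^{-2}e^{u_k}=\infty$, which is the assertion, and both directions of the equivalence come out at once.

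The main obstacle is exactly this last step. Controlling $q_n$ as $n\to\infty$ is delicate because the sum over subsets is highly oscillatory and its number of terms grows with $n$: the subsets $S$ with $\sum_{i\in S}\ell_i$ bounded away from $1$ dominate but must be weighed against massive cancellation, while the small-arc regime (where $e^{u_k}\asymp\prod_{j\le k}(1-\ell_j)^{-1}$) and the regime of arcs bounded away from length $0$ behave quite differently and have to be reconciled. Establishing the combinatorial identity that produces the series $\sum_k b_k$ and proving the comparison $b_k\asymp k^{-2}e^{u_k}$ is the technical heart of the argument.

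Finally, a more probabilistic route --- closer in spirit to the Mandelbrot--Shepp model reviewed in Section~\ref{sec:cylinder} --- is to identify the vacant set with (the circle minus) the closed range of a subordinator whose Lévy measure is read off from the lengths $(\ell_n)_n$, turning ``covered with probability one'' into a potential-theoretic statement about that subordinator; this avoids some of the combinatorics, but translating the resulting integral criterion back into the sharp discrete condition $\sum_k k^{-2}e^{u_k}=\infty$ still requires the same asymptotic bookkeeping.
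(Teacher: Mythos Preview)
You correctly note at the outset that the paper does not prove this theorem: it is quoted from Shepp's 1972 paper \cite{Sh} and invoked as a black box in the proof of Proposition~\ref{prop:sat}. There is therefore no proof in the paper to compare your sketch against, and nothing further is required here.
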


The articles \cite{BBM,LAS} exposed that $\{\alpha=1\}$ is a threshold for the Mandelbrot-Shepp model in the real line, where conditioning on the origin not being covered, a non-trivial set of vacant objects appears with positive probability.  Here in the circle, this value holds the same significance, which makes the result of Proposition \ref{prop:sat} not surprising. There are many ways to proceed with the proof of Proposition \ref{prop:sat}, we choose to use a concentration bound on the Poisson random variables and Theorem \ref{teo:Sheep}. This approach establishes an explicit connection between the Dvoretzky problem and our model.
\bigskip

\begin{Prop}\label{prop:sat}
 For the Mandelbrot-Shepp model $\pi(\alpha)=1$ for all $\alpha>1$ and $\pi(\alpha)<1$ for all $\alpha<1$.
\end{Prop}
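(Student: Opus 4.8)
The plan is to prove the two halves separately, both by reducing to Shepp's criterion (Theorem \ref{teo:Sheep}). For the statement $\pi(\alpha)<1$ when $\alpha<1$, the simplest route is to invoke Lemma \ref{Lem:nontrivialcovering2} together with the scaling structure of the model: a branching argument of the type used there already gives $\mathbb{P}_\alpha(\mathcal{V}\neq\emptyset)>0$ for small $\alpha$, but to push the threshold all the way to $1$ I would instead use the Dvoretzky-problem comparison. Namely, condition on the configuration of ``large'' objects (say those of height $>1$, i.e. arcs covering all of $\mathbb{S}^1$): with positive probability none appears in time $\alpha<1$. Given that, the remaining arcs (heights in $(0,1]$) arrive at Poissonian times, and by a concentration bound on the relevant Poisson counts the ordered sequence of arc lengths $(\ell_n)_n$ one obtains is, with positive probability, dominated by a deterministic sequence for which Shepp's series $\sum_n n^{-2}\exp\{\ell_1+\cdots+\ell_n\}$ converges; Theorem \ref{teo:Sheep} then says that deterministic sequence fails to cover $\mathbb{S}^1$ a.s., hence so does the dominated random one, giving $\pi(\alpha)<1$.

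For the statement $\pi(\alpha)=1$ when $\alpha>1$, I would run the same comparison in the reverse direction. Fix $\alpha>1$. Using Lemma \ref{lem:continuityofprobabilitytruncated} it suffices to show that for the truncated model $\omega[z]$, with $z=1/n$ small, the covering probability $\pi_z(\alpha)$ tends to $1$; equivalently, it suffices to produce, inside a configuration $\omega_\alpha$, a subsequence of arcs whose (deterministic) lengths make Shepp's series diverge. The configuration restricted to heights in $(0,1]$ has intensity $\alpha\,dx\otimes r^{-2}dr$, so the number of arcs with length in $(2^{-k-1},2^{-k}]$ is Poisson with mean $\sim \alpha 2^{k}$; by a lower concentration bound for these Poisson variables, with probability tending to $1$ (as we let the cutoff scale go to $0$, or by a Borel--Cantelli argument across the dyadic scales) we get at least $c\alpha 2^{k}$ arcs of length at least $2^{-k-1}$ for every large $k$. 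Feeding the resulting lower bound on the order statistics $\ell_n$ into $\sum_n n^{-2}\exp\{\ell_1+\cdots+\ell_n\}$ and computing $\ell_1+\cdots+\ell_n\approx \log(n/\alpha)$ shows the series behaves like $\sum_n n^{-2}\cdot(n/\alpha)=\alpha^{-1}\sum_n n^{-1}=\infty$ precisely because the exponent's linear coefficient in $\log n$ equals $1$ (times $\alpha$ vs.\ the $\alpha^{-1}$ from the mean count, so the effective power is $n^{\alpha/\alpha}$; the bookkeeping here is where $\alpha>1$ versus $\alpha<1$ flips the divergence). Shepp's theorem then gives full covering a.s., so $\pi(\alpha)=1$.

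The main obstacle I anticipate is the passage from the \emph{random} arc configuration of the Mandelbrot--Shepp model to the \emph{deterministic} decreasing sequence $(\ell_n)_n$ required by Shepp's theorem, in a way that is quantitatively sharp enough to land on both sides of the threshold $\alpha=1$ rather than some worse constant. Concretely, one must control all dyadic length scales $2^{-k}$ simultaneously (not just those comparable to the space, which is all that $f\in\mathrm{RV}_{-1}$-type arguments give), show that the Poisson counts at every scale concentrate around their means $\sim\alpha 2^k$ with only an $o(1)$ multiplicative error accumulated across scales, and verify that the partial sums $\ell_1+\cdots+\ell_n$ match $\log n$ up to an additive error that does not affect whether $\sum n^{-2}e^{\ell_1+\cdots+\ell_n}$ converges. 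A clean way to organize this is to phase-space the whole thing: show that with probability $1$ there is a (random, but a.s.\ existent) constant after which the arc lengths dominate/are dominated by an explicit deterministic sequence, apply Theorem \ref{teo:Sheep} to that sequence, and then note that covering is a tail event under adding/removing finitely many arcs (which only uses Lemma \ref{Lem:easycoupling} and Lemma \ref{Lem:coverpinD}).
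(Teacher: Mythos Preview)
Your proposal is essentially the paper's proof: control the ordered arc lengths $(\ell_n)_n$ by Poisson concentration plus Borel--Cantelli, then feed the resulting deterministic comparison sequence into Shepp's criterion (Theorem~\ref{teo:Sheep}).

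The one difference worth noting is that the paper's execution dissolves the obstacle you flag. Rather than dyadic binning, the paper looks directly at the cumulative count: $\#\{i:\ell_i\geq 1/n\}=\omega\big([0,1)\times[1/n,\infty)\big)$ is exactly $\mathrm{Poisson}(\alpha n)$, so $\{\ell_n\geq 1/n\}=\{\omega(R_n^1)\geq n\}$. For $\alpha=1+\epsilon$ a Chernoff bound makes $\mathbb{P}_{1+\epsilon}(\omega(R_n^1)<n)$ summable (Lemma~\ref{lem:poissoncontrol}), so a.s.\ $\ell_n\geq 1/n$ eventually, and $\sum n^{-2}\exp\{1+\tfrac12+\cdots+\tfrac1n\}\asymp\sum n^{-1}=\infty$. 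For $\alpha=1-\epsilon$ one uses $R_n^\delta=[0,1)\times[\delta/n,\infty)$ with $\delta=1-\epsilon^2$, whose count is $\mathrm{Poisson}(n/(1+\epsilon))$, to get $\ell_n\leq \delta/n$ eventually and $\sum n^{-2}\exp\{\delta\sum_{k\le n}1/k\}\asymp\sum n^{\delta-2}<\infty$. Because each $n$ is handled by a single Poisson tail estimate rather than a sum over scales, no multiplicative error accumulates, and the threshold lands exactly at $\alpha=1$.
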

\begin{proof}[Proof of Proposition \ref{prop:sat}]
Theorem \ref{teo:Sheep} does not allow $(\ell_n)_n$ to assume random values, but this problem can be resolved by conditioning.  To start, let $\omega=\sum_{i\in I} \delta_{(x_i,y_i)}$ be a configuration, and define the random sequence $(\ell_n)_n$, where:
\begin{align*}
    \ell_n=\sup\{r>0: \omega\{y\geq r\}=n\}. 
\end{align*} That is, the size of the $n-th$ biggest object.

Together with the sequence $(\ell_n)_n$, define the regions where objects are expected to belong:
\begin{align*}
    R_n^\delta=[0,1)\times \left[\frac{\delta}{n},\infty\right).
\end{align*}

With $\e>0$ small and $\delta=1-\e^2$, notice that $\omega_{1+\e}(R_{n}^1)$ is a Poisson random variable with rate $(1+\e)n$, and $\omega_{1-\e}(R_n^{\delta})$ is a Poisson random variable with rate $ n/(1+\e)$. As a application of the Chernoff bound for the Poisson random variable, it is possible to prove the following Lemma; the proof of which is postponed to the Appendix~\ref{subsec:Usefulprop}.  
\begin{Lem} \label{lem:poissoncontrol}
 For any $\e>0$, letting $\delta=1-\e^2$, we have that
\begin{align}\label{eq:lempoissoncontrol1}
    \sum_{n=1}^{\infty} \mathbb{P}_{1+\epsilon}\left({\omega(R_n^1)<n}\right)<\infty, \text{ and } \\
    \sum_{n=1}^{\infty} \mathbb{P}_{1-\epsilon}\left({\omega(R_n^{\delta})> n}\right) <\infty\label{eq:lempoissoncontrol2}. 
\end{align} 
\end{Lem}

Recall that we need to show two things: First, when $\alpha>1$ we have $\pi(\alpha)=1$. Second, when $\alpha<1$ then $\pi(\alpha)<1$. 

Start by fixing $\alpha>1$, and observe that: 
\begin{align*}
    \{\omega(R_n^1)<n\}=\left\{\ell_{n}<\frac{1}{n}\right\}.
\end{align*} Using equation \eqref{eq:lempoissoncontrol1}, together with Borel Cantelli we conclude that:
\begin{align}\label{eq:covering1prob}
    \P{\exists n_0, \text{ s.t. }\ell_n>\frac{1}{n} \forall n>n_0}=1.
\end{align}

Fixed the sequence $(r_n)_n$ where $r_n=1/n$ when $n>n_0$ and zero otherwise. There exists a constant $C=C(n_0)$ such that:
\begin{align*}
   \sum_{n=1}^{\infty} n^{-2}e^{r_1+...+r_n}&= \sum_{n=1}^{\infty} n^{-2}e^{r_1+...+r_{n_0}} \exp\left\{\sum_{k=n_0}^n r_k\right\}
   \geq \sum_{n=1}^{\infty} n^{-2}\exp\left\{\sum_{k=n_0}^n \frac{1}{k}\right\} \geq \sum_{n=1}^{\infty} \frac{C}{n}=\infty.
\end{align*} In other words, by Theorem~\ref{teo:Sheep}, for all $n_0>0$, the sequence $(r_n)_n$ with $n>n_0$ covers the circle with probability one. So if $\alpha>1$, by equation 
\eqref{eq:covering1prob}, one gets the following. 
\begin{align*}
    \pi(\alpha)=\sum_{k=1}^{\infty} \P{\mathcal{C}(\omega)=[0,1), \left\{\ell_n>\frac{1}{n}\, \forall n>k\right\}}.
\end{align*} And, if instead of placing objects with size $\ell_n$ we place an smaller object with fixed size $r_n=1/n$, by Theorem \ref{teo:Sheep} the space is going to be fully covered. So, by coupling one object into another $\pi(\alpha)=1$ for every $\alpha>1$ as desired. 

Analogously, for any $\e>0$ small, and $\alpha=1-\e<1$ take $\delta=1-\e^2$, and notice that:
\begin{align*}
    \{\omega(R_n^{\delta})>n\}=\left\{\ell_{n}>\frac{\delta}{n}\right\}.
\end{align*} Using the equation \eqref{eq:lempoissoncontrol2} together with Borel Cantelli we conclude that there exists just a finite number of regions $R_{n}^{\delta}$ with more than $n$ objects, thus:
\begin{align*}
    \P{\exists n_0, \text{ s.t. }\ell_n<\frac{\delta}{n} \forall n>n_0}=1.
\end{align*}

Fixed a sequence $(r_n)_n$ where $r_n<\frac{\delta}{n}$ for every $n>n_0$, then there exists a constant $c=c(n_0)$ such that:
\begin{align*}
   \sum_n n^{-2}e^{r_1+...+r_n}&
   \leq e^{n_0}+ \sum_{n>n_0} n^{-2}\exp\left\{\delta \sum_{k=n_0}^n \frac{1}{k}\right\} <e^{n_0}+ \sum_n c n^{-1-\e^2}< \infty. 
\end{align*} In other words, by Theorem \ref{teo:Sheep} the sequence $(r_n)_n$, where $r_n<\frac{\delta}{n}$ for every $n>n_0$, does not cover the space with probability one. In particular, if $\alpha<1$, there exists $n_0>0$ such that:  
\begin{align*}
    1-\pi(\alpha)> \P{\mathcal{V}(\omega)\neq \emptyset, \left\{\ell_n<\frac{\delta}{n}, \forall n>n_0\right\}}>0.
\end{align*} Where, if instead of placing an object $\ell_n$, we place an bigger object of size $\delta/n$. Then by Theorem \ref{teo:Sheep} the space have positive probability to not be covered. So, $\pi(\alpha)<1$ for every $\alpha<1$, as desired. 
\end{proof}

\subsubsection{Limits in distribution}\label{Subsubsec:pilimit} \noindent

 In order to prove the limit of the discrete process towards the Mandelbrot-Shepp model, we will create a coupling between its truncated version and the discrete model.  Our objective with the coupling is to demonstrate that in the limit, whenever the truncated version covers the space, the discrete model also covers it, and vice verse. This association between the truncated version and the discrete model, together with the limit in Lemma \ref{lem:continuityofprobabilitytruncated}, will give us the desired distributional limit.

We utilize the graphical construction of the Mandelbrot-Shepp model to simultaneously construct the discrete covering process. This construction establishes a monotonic coupling between both systems, indicating that covering in the Mandelbrot-Shepp model implies covering in the discrete model. However, it should be noted that in this coupling the converse is not necessarily true: it is possible that while the Mandelbrot-Shepp model is not covered, the associated discrete model is. To address this issue, we need a quantitative connection between the two models.

Given a configuration $\omega=\sum_{i\in I} \delta_{(x_i,y_i)}$ in the cylinder $S$, we define the process $W^n=W^n(\omega)$ in the torus $\mathbb{Z}/n\mathbb{Z}$ (a covering process with radius distribution $\P{R>r}=1/r$), as follows: for every $\ell \in\{0,...,n-1\}$, and $k\in \N$, define the regions in $[0,1)\times [1/n,\infty)$:
\begin{align*}
    \widehat{R}_{\mathcal{O}(\ell ,k)}= \left[\frac{\ell}{n},\frac{\ell+1}{n}\right)\times  \left[\frac{k}{n},\frac{k+1}{n}\right). 
\end{align*} Using such regions, define the process $W^n=W^n(\omega)$ as:
\begin{align*}
    W^n=W^n(\omega)=\bigcup_{\ell =0}^{n-1} \bigcup_{k=1}^{\infty} \mathcal{O}(\ell,k) \ind\left\{\omega\left(\widehat{R}_{\mathcal{O}(\ell ,k)}\right)>0\right\},
\end{align*} where $\mathcal{O}(\ell ,k)=\{\ell,\ell+1,\cdots, \ell+k-1\}$ is an arc. 

Notice that since
\begin{align*}
    \P{\omega\left(R_{\mathcal{O}(\ell,k)}\right)=0}=\exp\left\{-\frac{\alpha}{n}\left(\frac{1}{k}-\frac{1}{k+1}\right)\right\},
\end{align*} the process $W^n(\omega)$ has the same distribution as the continuous covering process with radius distribution $\P{R>r}=1/r$ at time $\alpha n$. So, we couple both process in the natural way.  

Observe the following problem thought. It is possible that the process $W^n$ covers $\mathbb{Z}/n\mathbb{Z}$, while in the truncated representation $\omega[1/n]$, there exist points in the discrete set $\left\{\frac{\ell}{n}, \ell\in \{0,1,\cdots,n\}\right\}\subset[0,1)$ not covered. As a result, providing information on the process $W^n$ relying solely on the configuration $\omega[1/n]$ is a complex task.  

Given this challenge, we introduce another discrete auxiliary covering process, which will help us to demonstrate that the covering probability converges to $\pi$.  Moreover, with this auxiliary covering process, one will be able to relate it back to the process $W^n$ proving then Theorem \ref{teo:2*}.

Analogously to $W^n$, define the process $X^n=X^n(\omega)$ in the torus $\mathbb{Z}/n\mathbb{Z}$ (a covering process with radius distribution $\P{R>r}=\log(1+1/(r-1))$) as follows: For every $\ell \in\{0,...,n-1\}$, and $k\in \N$, define the following regions in $[0,1)\times [1/n,\infty)$:
\begin{align*}
    R_{\mathcal{O}(\ell ,k)}=\left\{x\in S:  \left\{\frac{\ell }{n},\frac{\ell +1}{n},...,\frac{\ell +k-1}{n}\right\}\subset \Pi(x)\right\}. 
\end{align*} As the name suggests, if $\{\omega(R_{\mathcal{O}(\ell ,k)})>0\}$ the covering $X^n$ will have an object $\mathcal{O}(\ell ,k)=\{\ell ,\ell +1,...,\ell +k-1\}$. More precisely, define $X^n$ as:
\begin{align}\label{eq:defXlog}
    X^n=X^n(\omega)=\bigcup_{\ell =0}^{n-1} \bigcup_{k=1}^{\infty} \mathcal{O}(\ell, k) \ind\left\{\omega\left(R_{\mathcal{O}(\ell ,k)}\right)>0\right\}.
\end{align} Observe in Figure~\ref{fig:5}, the regions related to both processes $W^n$ and $X^n$. 

\begin{figure}[ht!]
    \centering \includegraphics{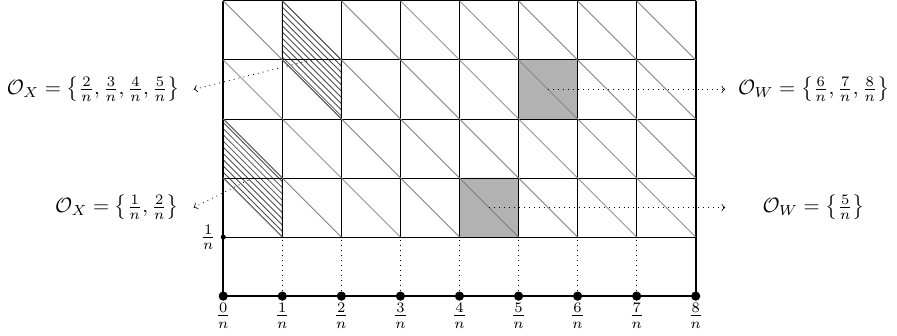}
    \caption{In the figure the regions corresponding respectively to the objects in the process $W$ and $X$ are drawn using gray and a pattern of lines respectively. Each square region for $W$ corresponds to an object, and each lozangular region for $X$ corresponds with another object. Notice that given a realization of $\omega[1/n]$, in the coupling looking to the set $P_n=\left\{\frac{\ell}{n}: \ell \in\{0,1,\cdots,n\right\}$, every point covered by $W^n$ is also covered by $X^n$.}
    \label{fig:5}
\end{figure}

Concerning $X^n$, we have that:
\begin{align*}
    \P{\omega_{\alpha}\left(R_{\mathcal{O}(\ell ,k)}\right)=0}=\exp\left\{-\frac{\alpha}{n}\left(\log\left(1+\frac{1}{k-1}\right)-\log\left(1+\frac{1}{k}\right)\right)\right\}.
\end{align*} Thus $X^n(\omega_{\alpha})$ has the same distribution as the continuous covering process with radius distribution $\P{R>r}=\log(1+1/(r-1))$ at time $\alpha n$. So, one can couple both process again in the natural way.

The reason why we first work with the process $X^n$ instead of the process $W^n$ lies in a monotonous property presented in the construction. To make it clear, define the set $P_n=\{\ell /n\in [0,1): \ell \in \{0,1,...,n-1\}\}$ and associate it with the torus $\mathbb{Z}/n\mathbb{Z}=\{0,1,...,n-1\}=nP_n$.  Notice that in the process $X^n$, any element in the region $R_{\mathcal{O}(\ell,k)}$ covers the points $\{\ell,\cdots, \ell+k-1\}$ in $\mathbb{Z}/n\mathbb{Z}$, and also covers the points $\{\frac{\ell}{n},\frac{\ell +1}{n},...,\frac{\ell +k-1}{n}\}\in P_n$ in the Mandelbrot-Shepp model. Therefore, whenever the torus is covered in the process $X^n$, the points $P_n$ are also covered, and vice versa.  This fact is not true for the process $W^n$, where it is possible to cover the set $P_n$ in the Mandelbrot-Shepp model, but not cover the set $\mathbb{Z}/n\mathbb{Z}$ using the process $W^n$. 

In the coupling of $X^n$, observe that it is possible for the truncated configuration $\omega[1/n]$ to cover the set $P_n$ but not necessarily the whole interval $[0,1)$. To address this case, before proving the limit in distribution of the covering, we need to construct a quantitative argument by computing the number of points in the set $P_n$ that are missing in the process $X^n$, under the condition that the Mandelbrot-Shepp model is not covered; see Proposition \ref{teo:circlecontdimention}, where we show that whenever a point is missing in the set $\omega[1/n]$, with high probability there must also be a missing point in the set $P_n$.

Given any configuration $\omega=\sum_{i\in I} \delta_{(u_i,r_i)}$, define for any $M>0$ and $z>0$, the truncated  configuration above $M$ and bellow at height $z$ as:
\begin{align*}
  \omega[z,M]=\sum_{i\in I} \delta_{(u_i,r_i)}\delta_{\{Z>r_i>z\} }.
\end{align*}  With this, conditioning that the origin is not covered, we have a small region near the origin with many  missing points.



\begin{Lem}\label{teo:linecontdimention}
Fix $\zeta\in\left(0,1\right)$, and $\alpha\in (0,1)$. For any integer $n>0$, any value of $M>0$ such that $M+\zeta<1$, and an arbitrarily $r=r(n)\in\left[0,\frac{1}{n}\right)$, define:
\begin{align*}
    Y(n,\zeta,r,M)=\sum_{k=1}^{\lfloor \zeta n\rfloor} \ind\left\{\frac{k}{n}+r\in \mathcal{V}\left(\omega\left[\frac{1}{n},M\right]\right)\right\},
\end{align*} the number of vacant points of the form $\frac{k}{n}+r$ in the open set $(0,\zeta)$, with respect to the truncated measure $\omega\left[\frac{1}{n},M\right]$.  Then, for every positive $\e>0$, we get that:
 \begin{align} \label{eq:Yconcentrationholes}
\lim_{n\to \infty} \Pa{\left|\frac{\ln{Y(n,\zeta,r,M)}}{\ln{n}}- (1-\alpha)\right|>\e\middle| \,0\in \mathcal{V}\left(\omega\right) }=0. 
\end{align}
\end{Lem}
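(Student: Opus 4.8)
The plan is to compute the conditional law of the truncated vacant set near the origin and then run a second-moment (Chebyshev) argument on $Y(n,\zeta,r,M)$, mirroring the structure already used in Proposition~\ref{Prop2} and in Lemma~\ref{Lem:Probability2} of the Gumbel phase. First I would fix the parameters $\zeta,\alpha,M,r$ and observe that, because the heights appearing in $\omega[\tfrac1n,M]$ are bounded away from $1$ (we assumed $M+\zeta<1$), an object from this truncated configuration that covers the point $\tfrac{k}{n}+r$ must have its left endpoint $x$ in an interval to the \emph{left} of $\tfrac{k}{n}+r$ of length at most $M$, and its height at least the gap. Conditioning on $\{0\in\mathcal V(\omega)\}$ removes from $\omega$ exactly the points whose projection contains $0$; by the restriction/independence property of Poisson processes this conditioning affects only objects lying in the region $R_0=\{(x,y):0\in\Pi(x)\}$, and leaves the rest of the configuration untouched. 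So conditionally on $\{0\in\mathcal V(\omega)\}$, the presence of a covering object for $\tfrac{k}{n}+r$ coming from heights in $[\tfrac1n,M]$ and with left endpoint in $(0,\tfrac{k}{n}+r)$ is governed by an \emph{independent} Poisson process, while covering objects with left endpoint in the other arc are forbidden when $y<$ that distance but their relevant ``long'' parts are precisely what the conditioning kills. The upshot is an explicit formula of the shape
\begin{align*}
    \Pa{\tfrac{k}{n}+r\in\mathcal V\big(\omega[\tfrac1n,M]\big)\ \middle|\ 0\in\mathcal V(\omega)}
    = \exp\Big\{-\alpha\int \text{(intensity over the admissible region)}\Big\}
    = (k/n)^{\alpha}\,(1+o(1)),
\end{align*}
uniformly for $k$ in a polynomial range, exactly as $n^{-\alpha g_n}$ arose in Lemma~\ref{Lem:Probability2}; here the relevant integral $\int_{1/n}^{M}\!\!\int y^{-2}\,dx\,dy$ over $x\in(0,k/n)$ produces the factor $(k/n)^\alpha$ after the conditioning trims the divergent part near $y\to 0$.

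Granting this, I would compute $\Ex{Y}$ and $\Ex{Y^2}$ under the conditional measure. Summing the one-point probabilities,
\begin{align*}
    \Ex{Y\mid 0\in\mathcal V} \ \asymp\ \sum_{k=1}^{\lfloor\zeta n\rfloor}(k/n)^{\alpha}\ \asymp\ \frac{\zeta^{1+\alpha}}{1+\alpha}\,n^{1-\alpha},
\end{align*}
so $\ln\Ex{Y}/\ln n\to 1-\alpha$, which is the claimed centering. For the variance I would use that two points $\tfrac{k}{n}+r$ and $\tfrac{j}{n}+r$ (say $j<k$) are jointly vacant in $\omega[\tfrac1n,M]$ with a probability that factorizes up to the overlap of their ``admissible left-endpoint regions''; since admissible regions sit in $(0,k/n)$ with controlled geometry, the joint probability is at most $C\,(k/n)^{\alpha}(\,(k-j)/n\,)^{\alpha}$ for $k-j$ not too small, plus a negligible diagonal-type correction for $k-j$ small. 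This yields $\Ex{Y^2}-\Ex{Y}^2 = o(\Ex{Y}^2)$ provided $\alpha<1$ (which holds since $\alpha\in(0,1)$), exactly as in the proof of Proposition~\ref{Prop2}. Chebyshev then gives $Y/\Ex{Y}\to 1$ in conditional probability, hence $\ln Y/\ln n\to 1-\alpha$ in conditional probability, which is \eqref{eq:Yconcentrationholes}. A small point to dispatch: $Y\ge 1$ with conditional probability tending to $1$ (so $\ln Y$ is well-defined on the relevant event), which follows from the second-moment bound itself since $\Ex{Y}\to\infty$.

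The main obstacle I anticipate is making the conditional one-point and two-point probability estimates genuinely \emph{uniform} in $k$ (and in the nuisance shift $r\in[0,\tfrac1n)$) across the whole range $1\le k\le\zeta n$, including $k$ of order a small power of $n$ and $k$ of order $n$; near $k=O(1)$ the approximation $(k/n)^\alpha$ is crude but those terms contribute negligibly to the sum, while near $k\asymp n$ one must check that the cutoffs $M$ and $\zeta<1-M$ keep all the relevant objects genuinely ``short'' so that the Mandelbrot–Shepp wrap-around and the truncation at height $M$ do not interfere. A secondary technical nuisance is the precise bookkeeping of the conditioning on $\{0\in\mathcal V(\omega)\}$: one must verify that it is equivalent, for the purpose of events measurable with respect to objects with left endpoint in $(0,\zeta)$ and height in $[\tfrac1n,M]$, to simply deleting the independent Poisson component in the ``forbidden'' region, so that the remaining randomness is a bona fide Poisson process and the factorization in the second-moment computation is legitimate. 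Once these uniform estimates are in hand, the Chebyshev argument is routine.
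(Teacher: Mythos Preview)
Your first-moment computation is already off: the conditional one-point probability is not $(k/n)^{\alpha}$ but rather, after removing the objects that touch $0$ and integrating the intensity $\alpha y^{-2}$ over the admissible region, one gets
\[
\Pa{\tfrac{k}{n}+r\in\mathcal V\big(\omega[\tfrac1n,M]\big)\,\middle|\,0\in\mathcal V(\omega)}=\exp\!\big\{-\alpha\big[\ln(np_k)+1-p_k/M\big]\big\}\asymp k^{-\alpha},
\]
with $p_k=\tfrac{k}{n}+r$; summing $k^{-\alpha}$ over $k\le \zeta n$ does give $\asymp n^{1-\alpha}$, so you land on the correct centering despite the slip.

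The real gap is the variance step. The two-point probability does \emph{not} factorize up to a correction that washes out. Repeating the one-point computation with both $p_j$ and $p_k$ vacant (say $j<k$) yields
\[
\Pa{p_j,p_k\in\mathcal V\,\big|\,0\in\mathcal V}\ \asymp\ j^{-\alpha}\,(k-j)^{-\alpha},
\]
reflecting the renewal-type structure: once $p_j$ is vacant, the process to its right starts afresh. Consequently
\[
\mathrm{Var}(Y)\ \asymp\ \sum_{j<k\le N} j^{-\alpha}\big[(k-j)^{-\alpha}-k^{-\alpha}\big]\ \asymp\ N^{2(1-\alpha)}\Big[(1-\alpha)B(1-\alpha,1-\alpha)-1\Big]\big/\big(2(1-\alpha)\big),
\]
which for every $\alpha\in(0,1)$ is a \emph{positive constant} times $\Ex{Y}^{2}$ (e.g.\ $\pi/4-1/2$ at $\alpha=1/2$). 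So $\mathrm{Var}(Y)/\Ex{Y}^{2}\to c>0$, Chebyshev gives only $\P{Y\ge \delta\,\Ex{Y}}\ge c'>0$, and you cannot conclude $\P{Y<n^{1-\alpha-\e}}\to 0$. In fact $Y/\Ex{Y}$ has a genuinely non-degenerate limit, which is exactly why the paper does not attempt a direct second-moment argument.

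The paper's proof supplies the missing lower bound by a multi-scale mechanism: it partitions heights into geometric scales $H(\ell)=[0,1)\times(N^{-(\ell+1)},N^{-\ell}]$, and at each scale runs a supercritical branching process on nested vacant subintervals; an auxiliary ``ignition'' process (Lemma~\ref{lem:intermediaryYlemma}) guarantees, with conditional probability $\to 1$, arbitrarily many independent seeds for these branchings, so that at least one survives and, by the branching concentration Theorem~\ref{thm:Concentrationbranching}, produces $\ge N^{(\ell_f-\ell_0)(1-\alpha-\e)}$ vacant descendants. This is precisely the device that converts the constant-probability survival coming from a single scale into the high-probability lower bound you need; a bare second-moment argument cannot do this here. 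Your upper bound via Markov on $\Ex{Y}$ is fine and coincides with the paper's.
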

\begin{proof}[Proof of Lemma \ref{teo:linecontdimention}]
The proof is based on the concentration results of the branching process; see Theorem \ref{thm:Concentrationbranching}. Here, the main strategy is to divide the covering into height scales, and in each scale define two independent processes, a supercritical branching process and an ignition process. The idea consists of proving that many ignitions will occur with high probability, and for each such ignition we can obtain an independent branching processes with positive probability to survive. Whenever the branching process survives, we will be able to concentrate the random variable $Y(n,\zeta,r,M)$ around $n^{1-\alpha}$, the expected number of children of the branching process. 

To define the height scales of the problem, with fixed $\alpha>0$ take any $N=N(\alpha,\zeta)$ that satisfies $N>\max\{6,2\zeta^{-1}\}$ and $\frac{1}{2}N^{1-\alpha}\exp\left\{-\alpha\right\}>1$. Here, the first condition guarantees a minimal value to start the branching process described in equation~\eqref{eq:Branchingdef1} and proceed with it between scales, and the second condition is used to guarantee a high expected number of children in the construction of the branching processes, see equation~\eqref{eq:Expxiconstruction1}. 

With the value of $N$ fixed, divide the cylinders into height scales $H(\ell)=[0,1)\times (N^{-(\ell+1)},N^{-\ell}]$, where $\ell\geq 1$. Also, to simplify the notation, assume that $M=\zeta$ and $n=N^{-\ell_f}$ for some integer $\ell_f\geq 1$.  Later in the proof, we can show that this assumption does not affect the results. 

To proof the limit in equation \eqref{eq:Yconcentrationholes}, we are going to show that for any $\delta>0$, and for any $\e>0$, there exists a value of $n_0=n_0(\alpha,M,\e,\delta)$ such that for every $n>n_0$:
\begin{align}
\Pa{\left|\frac{\ln{Y(n,\zeta,r,M)}}{\ln{n}}- (1-\alpha)\right|>\e\middle| \,0\in \mathcal{V}\left(\omega\right) }<\delta. 
\end{align} For this, we will also divide the proof in two. The first part is to show that $Y(N^{\ell_f},\zeta,r,M)$ is greater than $N^{\ell_f(1-\alpha-\e)}$  with high probability. Then using first moment techniques, the second part consists in showing that $Y(N^{\ell_f},\zeta,r,M)$ is smaller than $N^{\ell_f(1-\alpha+\e)}$ with high probability.  

In each scale $(H(\ell))_{\ell}$, define the following set of intervals used in the construction of the branching process:
\begin{align*}
    \mathcal{I}(\ell)=\left\{\left[\frac{2k}{N^{\ell}}-r,\frac{2k+1}{N^{\ell}}-r\right):2\leq k\leq \frac{(\zeta +r)N^{\ell}-1}{2} \right\}.
\end{align*} Where $ \mathcal{I}(\ell)$ is the set of two by two disjoint intervals that does not exceeds the value of $\zeta$. Also, it does not contain the first two possible intervals (do not starts at zero). Furthermore, in the Mandelbrot-Shepp model, notice that each sequence of fitted vacant intervals $\{(I_\ell)_{\ell=1}^{\ell_f}: I_\ell\in \mathcal{I}(\ell)\}$ contributes to one element in $Y(n,\zeta,r,M)$; therefore, we can bound $Y(n,\zeta,r,M)$ bellow by the number of such sequences. See Figure \ref{fig:6} for a representation of the set $H(\ell)$ and the regions $\mathcal{I}(\ell)$.

\begin{figure}[ht!]
    \centering
    \includegraphics{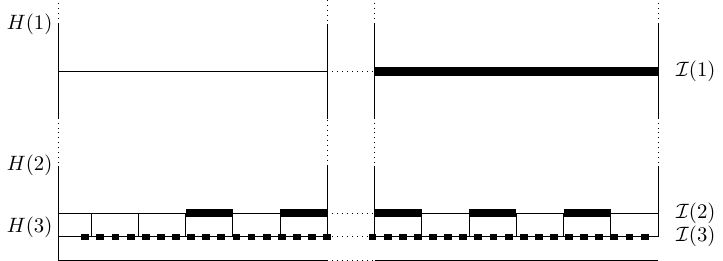}
    \caption{The space divided into regions $H(\ell)$, and in each region $H(\ell)$, we draw the set $\mathcal{I}(\ell)$ of disjoint segments, where we allow the branching process to survive. Notice that the first rectangle is always deform by the point $r$, and the set $\mathcal{I}$ does not start in the beginning. }
    \label{fig:6}
\end{figure}

To count the number of vacant intervals, fix a point $x\in [0,\zeta)$ and $\ell\geq 1$, and define the event:
\begin{align*}
     B^\ell(x)&=\left\{\left[x-\frac{1}{N^{\ell+1}},x\right)\in \mathcal{V}\left(\omega\left[N^{-(\ell+1)},N^{-\ell}\right]\right)\right\}
\end{align*}  For any $\ell\geq 1$, fixed any $I\in \mathcal{I}(\ell)$, denote by $I^0=\inf\{i: i\in I\}$ its first point, and define the random variable:
\begin{align*}
    \xi_I^\ell=\sum_{j=1}^{\lfloor N/2\rfloor} \ind\left\{B^{\ell} \left(I^0+\frac{2j}{N^{\ell+1}}\right)\right\}.
\end{align*}  In essence, the number of two by two disjoint empty regions from $I(\ell+1)$ that lies in $I$, which are not covered using the objects that are in $H(\ell)$.

As the construction suggests, we call $\xi_I^{\ell}$ as the number of children in the interval $I$. Notice that for any distinct $I,J\in \mathcal{I}(\ell)$, the random variables $\xi_J^{\ell}$ and $\xi_I^{\ell}$ are independent, since the distance between the intervals is greater than the larger object revealed by each event. Moreover,  with $\ell+1\leq \ell_f$  and for every point not near the origin and not near the end $\zeta$ (more precisely, $\frac{1}{N^{\ell}}\leq x-\frac{1}{N^{\ell+1}}<x<\zeta$), we have the following.
\begin{align*}
    \Pa{B^{\ell}(x)\middle |0\in \mathcal{V}\left(\omega\left[\frac{1}{n},M\right]\right)}&=\exp\left\{\int\limits_{N^{-(\ell+1)}}^{N^{-\ell}}\frac{\alpha}{y} \,dy\right\} \exp\left\{-\int\limits_{N^{-(\ell+1)}}^{N^{-\ell}}\frac{1}{N^{\ell+1}}\frac{\alpha}{y^2} \,dy\right\}\\&= N^{-\alpha} \exp\left\{-\alpha \left(1-\frac{1}{N}\right)\right\}.
\end{align*} Witch implies, that the expected number of children is greater than: 
\begin{align}\label{eq:Expxiconstruction1}
    \mathbb{E}_{\alpha}\left(\xi^{\ell}_I\right)\geq  \left\lfloor\frac{N}{2}-1\right\rfloor N^{-\alpha}\exp\left\{-\alpha \left(1-\frac{1}{N}\right)\right\}>N^{1-\alpha}\frac{\exp\left\{-\alpha\right\}}{2}>1 .
\end{align} 

Finally, given any $I\in  \mathcal{I}(\ell)$, define the branching process $(Z_{i}^I)_i$ as follows: Let $Z_0^I=1$ and $\Gamma_0=I$. Then, inductively, in generation $(i-1)-th$ given any set of intervals $\Gamma_i=\{J_k: J_k\in \mathcal{I}(\ell +i -1)\}_k$, define 
\begin{align}\label{eq:Branchingdef1}
    Z_{i}^I&=\sum_{J\in \Gamma_i} \xi^{\ell+i}_J, \text{ and} \\
    \Gamma_{i+1}&= \left\{L\in I(\ell+i): L\subset J\in \Gamma_i \text{ and } L\subset \mathcal{V}\left(\omega\left[N^{-(\ell+ j)},N^{-(\ell+j-1)}\right]\right)  \right\} \nonumber.
\end{align} 

 The mean number of children in each generation is greater than one by equation \eqref{eq:Expxiconstruction1}. So, we have a supercritical branching that can survive indefinitely with positive probability. Since we do not exactly know the distribution of the number of children in each generation, we need to perform an indirect calculation. By Theorem \ref{thm:Concentrationbranching}, for every positive $\e>0$ and any fixed height $\ell_0>1$, there exists a probability $\theta=\theta(\alpha,N)>0$ to survive at the limit when $n$ goes to infinity. By symmetries of the problem, this limit probability does not depend on the height of the first interval $I\in \mathcal{I}(\ell_0)$, 
since the distribution of the number of children of the process does not change between heights. In particular, for any $\e>0$, there exists $\ell_1=\ell_1(\theta,\e,\alpha,N)$ such that whenever $\ell_f>\ell_1+\ell_0$, we get for every $I\in \mathcal{I}(\ell_0)$ that:
\begin{align}\label{eq:ConcentrationgivenbyTHM2}
    \P{\frac{Z_{N^{\ell_f}}^I}{N^{(\ell_f-\ell_0)(1-\alpha)}}\geq N^{-(\ell_f-\ell_0)\e }}\geq \frac{\theta}{2}.
\end{align}  Later in the proof , we will use this equation and the value of $\ell_1$ to give a positive bound on the probability to survive in the last scale with many points in $Y(n,\zeta,r,M)$ using  independent trials that belongs to different initial heights. 

With the branching well defined, it is time to construct the ignition process as a sequence of events $(E_{\ell})_{\ell=1}^{\ell_2}$ for some $\ell_2\in\{1,\cdots, \ell_f\}$. Such sequence of events is not independent and will be determined by a set of objects near the origin. The goal of the process is to guarantee the existence of seeds, where each seed gives birth to many new independent branching processes capable of surviving.

For each $\ell\geq 1$, define the region $R(\ell)=[0,N^{-(\ell-1)}]\times(N^{-\ell},M)$. Then, set the event $E_{\ell}$ as: 
\begin{align}\label{eq:ignitiondefevent}
    E_{\ell}=\{\omega(R(\ell))=\emptyset\}.
\end{align}Despite being a sequence of dependent events, it satisfies the following property that guarantees many occurrences when we have a large number of scales to look.
\begin{Lem}\label{lem:intermediaryYlemma}
    For every $\delta>0$, $\alpha>0$, $M>0$, and $N>0$, define the ignition event as equation \eqref{eq:ignitiondefevent}. For every $J>0$, there exists a fixed integer $\ell_2=\ell_2(\delta,J,\alpha,N,M)<\infty$ such that whenever $\ell_f>\ell_2$, we get that: 
    \begin{align*}
        \Pa{\sum_{\ell=1}^{\ell_2} \ind\{E_\ell\}>J\middle| 0\in \mathcal{V}\left(\omega\left[\frac{1}{N^{\ell_f}},M\right]\right)}>1-\frac{\delta}{2}.  
    \end{align*}
\end{Lem}

The proof of Lemma \ref{lem:intermediaryYlemma} is postponed until the end of this proof. Moreover, to give an idea of the proof, we use the fact that the distribution of the closest object to the origin in each scale has a similar law and does not have mass at the origin.

Observe that, since $N>6$, whenever an ignition event $E_{\ell}$ occurs, we can say that the interval $\left[\frac{4}{N^{\ell+1}}-r, \frac{5}{N^{\ell+1}}-r\right)\in \mathcal{I}(\ell+1)$ is completely empty at height $\omega[N^{-(\ell+1)}]$. This allows a branching process to start there. Moreover, such an exploration of the branching process will be independent of the upcoming ignition events because of their mutual distance. 

Therefore, we can guarantee a large vacant set in the limit with high probability. With fixed $\e>0$, the idea consists of fixing some $J=J(\theta,\delta)$ large such that $\left(1-\frac{\theta}{2}\right)^J<\frac{\delta}{2(1-\delta)}$, then using the ignition Lemma \ref{lem:intermediaryYlemma}, we can find a minimal height $\ell_2$ such that with high probability there are $J$ ignitions until height $\ell_2$. In particular, if there are less than $J$ ignitions, we will assume that the vacant set is small, but if there are more than $J$, each of them will give an independent chance to survive with a large set in the branching, thus using the minimal height $\ell_1$ from equation \eqref{eq:ConcentrationgivenbyTHM2}, and taking $\ell_f>\ell_2+\ell_1$, we get that:
\begin{align*}
    \P{\frac{\ln{Y(N^{\ell_f},\zeta.r,M)}}{\ln{N^{\ell_f}}}\leq 1-\alpha-\e \middle |0\in \mathcal{V}\left(\omega\left[\frac{1}{N^{\ell_f}},M\right]\right)}&\leq \frac{\delta}{2}+\left(1-\frac{\delta}{2}\right)\left(1-\frac{\theta}{2}\right)^J<\delta.
\end{align*}

To prove that $Y$ cannot be greater than $N^{1-\alpha+\e}$, we can use the Markov inequality. Computing its first moment, there exists a $c=c(n,r)$ such that: 
\begin{align*}
    \mathbb{E}_{\alpha}\left({Y(n,\zeta,r,M)}\right)=\sum_{k=1}^{n}\P{\frac{k}{n}+r \in \mathcal{V}\left(\omega\left[\frac{1}{n}\right]\right)} \leq c n^{1-\alpha}.
\end{align*} So, for every $\e>0$ and $\delta>0$, there exits $\ell_3=\ell_3(\alpha,N,\e,\delta)$, such that for every $\ell_f>\ell_3$, we get that:
\begin{align}\label{eq:okmascomcondicional}
   \Pa{\left|\frac{\ln{Y(N^{\ell_f},\zeta.r,M)}}{\ln{N^{\ell_f}}}-(1-\alpha)\right|\leq \e \middle|0\in \mathcal{V}\left(\omega\left[\frac{1}{N^{\ell_f}},M\right]\right)}<\delta.
\end{align} 

Now the proof is almost over, but we need to do a few small considerations.  The first one is about the conditional event, and the second one is about the choice of $M$ and $n$ made early in the proof. 

We can exchange the conditional event in equation \eqref{eq:okmascomcondicional} for the event $\{0\in \mathcal{V}(\omega)\}$, for this, notice that $Y(n,\zeta,r,M)$ is a random variable that depends only on the configuration $\omega\left[\frac{1}{n},M\right]$. In particular, by independence of the Poisson random variable, such event is independent from objects smaller than $1/n$ or bigger than $M$, so for every $y_1\leq 1/n$, and $y_2\geq M$, we get that:
\begin{align*}
    &\P{\left|\frac{\ln{Y(n,\zeta.r,M)}}{\ln{n}}-(1-\alpha)\right|\leq \e \middle|0\in \mathcal{V}\left(\omega\left[\frac{1}{n},M\right]\right)}\\&=\P{\left|\frac{\ln{Y(n,\zeta.r,M)}}{\ln{n}}-(1-\alpha)\right|\leq \e \middle|0\in \mathcal{V}\left(\omega\left[y_1,y_2\right]\right)}.
\end{align*} Therefore, taking $y_1\to 0$, and $y_2\to \infty$, we can conclude that both sequence have the same limit. 

The last step to finish the proof is to show that the result can be obtained regardless of the choice of $n=N^{\ell_f}$ and $M$. To show that the same limit holds for any value of $n$, observe that on scales $N^{-\ell}$, by construction, we can guarantee completely vacant regions with probability $e^{-\alpha N}$, thus for values of $n$ between $N^{\ell+1}$ and $N^{\ell}$, we have by law of large numbers that $Y(n,\zeta,r,M)$ is greater than $e^{-\alpha N}$ times $Y(N^{-\ell},\zeta,r,M)$, giving the desired concentration. For the choice of $M$, notice that the limit occurs due to the existence of some ignition events defined by Lemma \ref{lem:intermediaryYlemma}; therefore, for any choice of $M$, we can always look for ignitions smaller than $M$ to guarantee the same survival rate. In this proof, we just choose $M+\zeta<1$, to avoid interference with the fact that $\{0\in \mathcal{V}\}$. With these considerations, we finish the proof. 
\end{proof}

For simplicity, the following proof inherits all the previous definitions. 

\begin{proof}[Proof of Lemma \ref{lem:intermediaryYlemma}]

The proof of this lemma consists of a dynamical construction that explores the Mandelbrot-Shepp set from top to bottom trying to find ignition events. Such construction induces a renewal process in the scales, and since such renewal is formed by random variables with well-behaved moments, the lemma is a direct consequence of the weak law of large numbers. 

We start the construction with fixed values of $\alpha>0$, $\delta>0$, $\zeta>0$, $M>0$ and $N>0$. In the construction, we will define the first region to have all the irregularities of the problem, so that the next ones are simpler and recursive. Here, we track three main information in each step: a region, a height, and the closest point to the $y-$axis within this region. Define a scale $s_0=\inf\{\ell\geq 1: N^{-(\ell-1)}<\min\{\zeta,M\}\}$, and set:
\begin{align*}
    h_0&=N^{-s_0}. \\
    A_0&=[0,\zeta]\times[h_0,M]. \\ 
    d_0&=\sup\{d: \omega(A_0\cap[0,d]\times(0,\infty))=0\}.  
\end{align*}

In words, we can describe our procedure as follows: Given a fixed region, look for the closest point to the  $y-$axis and find its distance $d$. If $d$ is large, it might be the case that you found an ignition. However, if $d$ is small, the object can influence the ignition event on other scales, so we must use $d$ to find the next scale not influenced by the objects discovered so far. Then, in the next undisturbed region, we repeat this procedure until we find many ignitions and prove the lemma.

With $A_0$ fixed, the random variable $d_0$ is an exponential random variable with some fixed positive rate, so it is not zero with probability one. In this proof, we are going to count the number of ignition events that occur just below $N^{-s_0}$, and show that by looking to deeper scales, we can find as many as we want. Since $\{d_0>0\}$ with probability one, we can define the triplet $(A_1,h_1,d_1)$ inductively. To be specific, with a fixed triplet $(A_{k-1},h_{k-1},d_{k-1})$ where $\{d_{k-1}>0\}$, we will define a height $s_{k}=\inf\{\ell\geq 1: N^{-(\ell-1)}<d_{k-1}\}$, then set:
\begin{align*}
    h_k&=N^{-s_k}.\\
    A_k&=[0,N^{-(s_k-1)}]\times [h_k,h_{k-1}).\\
    d_k&=\sup\{d: \omega(A_k\cap[0,d]\times(0,\infty))=0\}.
\end{align*}Moreover, we have that $d_k$ is an exponential random variable with rate $\alpha \left(h_k^{-1}-h_{k-1}^{-1}\right)$, so it is positive with probability one. 

By the continuity of the exponential random variable, it is clear that the process constructed above can be repeated infinitely many times. However, this is not sufficient to prove the lemma. To finish, we need to control the distance between different scales in the sequence and compute the probability of having an ignition in any fixed step. 

Define $Z_k=s_{k+1}-s_{k}$ the distance between the scales and then notice that for every positive integer $x>0$, the event $\{Z_k\geq x\}$ occurs if and only if $\{d_k<N^{-(s_k+x-1)}\}$ also happens. Thus, we have:  
\begin{align*}
    \P{Z_k\geq x}&=\P{d_k\leq N^{-(s_k+x-1)}}\\
    &=1-\exp\{-\alpha N^{-(s_k+x-1)}(h_k^{-1}-h_{k-1}^{-1})\}\\
    &\leq 1-\exp\{-\alpha N^{-x+1}\}.
\end{align*} In particular, the scale distance between different elements in the dynamical construction is a random variable with a light tail that has all the moments. 

As a direct consequence of this, for every $\delta>0$ and $J_0>0$, one can find $K_0(J_0,N,\delta)$ such that for every $n>K_0$: 
\begin{align*}
    \P{\sum_{i=1}^{n} Z_k >J_0}>1-\frac{\delta}{2}. 
\end{align*}

Now, in each of the $J_0$ steps, there might be an ignition happening. To compute the probability of an ignition on the scale $s_k$, note that if $\{N^{-(s_{k}-1)}<d_{k-1}\}$, then no object greater than $h_{k-1}$ can influence the presence of the ignition. Furthermore, in each step, we only need to check if $\{d_k>N h_k\}$ occurs. In particular, we have that:
\begin{align*}
    \P{d_k>N h_k}=\exp\{-\alpha N h_k \left(h_k^{-1}-h_{k-1}^{-1}\right) \}\geq e^{-\alpha N}. 
\end{align*} Where, for $N>0$ fixed is a constant probability bound. 

Therefore, after the scale $s_0=s_0(N,M,\zeta)$, since $J_0$ can be arbitrarily large, for every $J>0$ we can find a $K=K(\delta.J,\alpha,N,M,\zeta)$ such that for every $\ell>K$, we get: 
    \begin{align*}
        \P{\sum_{\ell=1}^{K} \ind\{E_\ell\}>J\middle| 0\in \mathcal{V}\left(\omega\left[\frac{1}{N^{\ell}},M\right]\right)}>1-\frac{\delta}{2}.  
    \end{align*}Finishing the proof as desired.

\end{proof}

\bigskip

\paragraph{The process $X^n$} We are going to adapt the result of the Lemma \ref{teo:linecontdimention} to the circle. In particular, exclusively for the process $X^n$, remember that if the truncated Mandelbrot-Shepp process covers the space, then $X^n$ also does. Now, we are going to prove that whenever the Mandelbrot-Shepp model is not covered, we can find many points in the form $\left\{\frac{k}{n}: k\in \{0,1,\cdots, n-1\}\right\}$ that are missing with high probability, thus forcing the process $X^n$ to be vacant as well. 

\begin{Prop}\label{teo:circlecontdimention} In the Mandelbrot-Shepp model, 
 let $n\in \N$, and set:
\begin{align*}
    Z_n=\sum_{k=1}^{n} \ind\left\{\frac{k}{n}\in \mathcal{V}\left(\omega\left[\frac{1}{n}\right]\right)\right\}
\end{align*} the number of missing points of the form $\left\{\frac{k}{n}: k\in \{0,1,\cdots, n-1\}\right\}$ in the circle $\mathbb{S}^1$, when we try to cover using just the truncated objects $\omega[1/n]$.  Then, we have that for $\alpha \in (0,1)$:
\begin{align}\label{eq:Zncomw}
      \frac{\log{Z_n}}{\log{n}}\ind\{\mathcal{V}(\omega)\neq \emptyset\}\overset{D}{\implies} (1-\alpha)\ind\{\mathcal{V}(\omega)\neq \emptyset\}.
\end{align}
\end{Prop}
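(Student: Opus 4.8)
The plan is to identify the limiting law and reduce \eqref{eq:Zncomw} to two tail estimates. The limit $(1-\alpha)\ind\{\mathcal{V}(\omega)\neq\emptyset\}$ is the two‑point random variable putting mass $\pi(\alpha)=\Pa{\mathcal{V}(\omega)=\emptyset}$ at $0$ and mass $1-\pi(\alpha)$ at $1-\alpha$ (non‑degenerate for $\alpha\in(0,1)$ by Proposition \ref{prop:sat}), whose distribution function is discontinuous only at $0$ and at $1-\alpha$. On $\{\mathcal{V}(\omega)=\emptyset\}$ both sides of \eqref{eq:Zncomw} vanish identically, so it suffices to show, for every $\e>0$,
\begin{align}\label{eq:plan-ub}
  \lim_{n\to\infty}\Pa{Z_n> n^{\,1-\alpha+\e}}&=0,\\
  \label{eq:plan-lb}
  \lim_{n\to\infty}\Pa{\mathcal{V}(\omega)\neq\emptyset,\ Z_n\le n^{\,1-\alpha-\e}}&=0 .
\end{align}
Indeed \eqref{eq:plan-ub} pins down the distribution function above $1-\alpha$, while \eqref{eq:plan-lb} shows that below $1-\alpha$ the mass of $\frac{\log Z_n}{\log n}\ind\{\mathcal{V}(\omega)\neq\emptyset\}$ accumulates, up to the atom $\pi(\alpha)$ inherited from $\{\mathcal{V}(\omega)=\emptyset\}$, at $1-\alpha$; together they give convergence of distribution functions at every continuity point.

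The bound \eqref{eq:plan-ub} is the easy half, via a first moment estimate exactly as at the end of the proof of Lemma \ref{teo:linecontdimention}. A point $z\in\mathbb{S}^1$ is missed by $\omega[1/n]$ precisely when no object of height at least $1/n$ covers it, and the Poisson rate of such objects is $\int_{1/n}^{1}y\,\tfrac{\alpha}{y^{2}}\,dy+\int_{1}^{\infty}\tfrac{\alpha}{y^{2}}\,dy=\alpha\ln n+\alpha$; hence $\Pa{z\in\mathcal{V}(\omega[1/n])}=e^{-\alpha}n^{-\alpha}$, so $\mathbb{E}_{\alpha}(Z_n)=e^{-\alpha}n^{\,1-\alpha}$ and Markov's inequality gives \eqref{eq:plan-ub}. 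This estimate is unconditional, so intersecting with $\{\mathcal{V}(\omega)\neq\emptyset\}$ is harmless.

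For \eqref{eq:plan-lb} the idea is to run the ignition–and–branching machinery already built for Lemma \ref{teo:linecontdimention} around a vacant point, the only genuinely new difficulty being that on $\{\mathcal{V}(\omega)\neq\emptyset\}$ that vacant point is random rather than the fixed origin. First cover $\mathbb{S}^1$ by finitely many arcs $I_1,\dots,I_L$ whose open middles still cover $\mathbb{S}^1$; since $\mathcal{V}(\omega)$, when non‑empty, is a non‑empty compact set (intersection of the nested compacts $\mathcal{V}(\omega[1/k])$, cf.\ Lemma \ref{lem:Topology}), it meets the middle of some $I_i$, and by rotational invariance of $\Lambda_\alpha$ and a union bound it suffices to bound $\Pa{\mathcal{V}(\omega)\cap I_i^{\mathrm{mid}}\neq\emptyset,\ Z_n\le n^{\,1-\alpha-\e}}$ for one fixed arc. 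Fix a small $M<1$ and a small $\zeta$ (chosen, given the target accuracy $\delta$, before letting $n\to\infty$): on the event above there are a.s.\ no objects of height $>1$ and only finitely many of height in $(M,1]$, and shrinking $\zeta$ we may further assume that the arc $(p,p+\zeta)$ to the right of the vacant point $p$ avoids all of them, so that there $\mathcal{V}(\omega[1/n])$ and $\mathcal{V}(\omega[1/n,M])$ agree on the $\tfrac1n$‑grid. Now a top–down exploration of the cylinder above $p$, exactly as in the proof of Lemma \ref{lem:intermediaryYlemma}, locates $p$ and, with probability close to $1$, yields arbitrarily many ignition scales below it while revealing only objects in a thin region near a vertical line, leaving the branching regions fresh; from each ignition one starts an independent supercritical branching process as in the proof of Lemma \ref{teo:linecontdimention}, and Theorem \ref{thm:Concentrationbranching} produces, with probability tending to $1$, at least $n^{\,1-\alpha-\e}$ surviving nested vacant sub‑arcs inside $(p,p+\zeta)$. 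Translating $p$ to the origin turns the circle's $\tfrac1n$‑grid into a shifted grid $\{k/n+r''\}$ with $r''=r''(p,n)\in[0,1/n)$, and Lemma \ref{teo:linecontdimention} is stated for an arbitrary such shift; hence each surviving sub‑arc contributes a distinct $\tfrac1n$‑grid point of $\mathbb{S}^1$ missed by $\omega[1/n,M]$, which by the previous step is also missed by $\omega[1/n]$ and so counted in $Z_n$. This gives $Z_n\ge n^{\,1-\alpha-\e}$ with probability at least $1-\delta$ on the event considered, proving \eqref{eq:plan-lb}.

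The main obstacle is precisely this transfer of conditioning from the deterministic origin used in Lemma \ref{teo:linecontdimention} to the global event $\{\mathcal{V}(\omega)\neq\emptyset\}$: one must manufacture a vacant point without spoiling the independence needed to then run the branching. The finite cover plus rotational invariance reduces to a fixed arc, and the exploration of Lemma \ref{lem:intermediaryYlemma} does the manufacturing while keeping the branching regions fresh; a secondary, purely bookkeeping point is reconciling the truncation $\omega[1/n]$ of the present statement with the $\omega[1/n,M]$ of the line lemma, handled by choosing $M$ and then $\zeta$ small. Granting \eqref{eq:plan-ub} and \eqref{eq:plan-lb}, the convergence in distribution \eqref{eq:Zncomw} follows.
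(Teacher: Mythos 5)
Your upper bound is identical to the paper's: $\mathbb{E}_{\alpha}(Z_n)=e^{-\alpha}n^{1-\alpha}$ plus Markov, unconditionally. Your lower-bound reduction to the two tail estimates is also sound, and you correctly identify the central obstacle: on $\{\mathcal{V}(\omega)\neq\emptyset\}$ the vacant point is random, while Lemma \ref{teo:linecontdimention} is stated relative to a fixed origin that is \emph{conditioned} to be vacant.

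However, you do not actually resolve this obstacle, and the steps you propose in its place contain a genuine gap. The crux in the paper is the triplet $(L,E(L),F(L,E(L)))$: $L$ is the leftmost vacant point, and $E,F$ are constructed by a controlled exploration that reveals only objects outside a rectangular region $(L,L+F)\times(0,E)$. On the approximating events $G(N)$ this exposes a fresh Poisson region of deterministic size $\tfrac{1}{2N}\times\tfrac{1}{2N}$ sitting to the right of $L$, and only then can $Z_n$ be stochastically dominated from below by $Y(n,\tfrac1{2N},r_n(L),1)$ and Lemma \ref{teo:linecontdimention} applied. Your replacement — cover $\mathbb{S}^1$ by arcs, reduce to one arc by rotational invariance, then ``run the exploration of Lemma \ref{lem:intermediaryYlemma}'' — does not produce such a fresh region: the union bound over arcs merely confines the vacant point to a fixed arc without derandomizing it; Lemma \ref{lem:intermediaryYlemma} is a statement \emph{conditional} on $\{0\in\mathcal{V}\}$ and presupposes the fresh region, it does not ``locate'' or ``manufacture'' a vacant point; and ``shrinking $\zeta$ so that $(p,p+\zeta)$ avoids all objects of height in $(M,1]$'' does not by itself imply that the restriction of the Poisson process to $(p,p+\zeta)\times(0,M)$ is unconditioned — one must argue that the event determining $p$ (and the event $\{\mathcal{V}(\omega)\neq\emptyset\}$ itself) is measurable with respect to the configuration outside that rectangle. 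That is precisely what the $(L,E,F)$ construction accomplishes, and it is missing from your argument. Without it, the stochastic domination $Z_n\succeq Y(\cdots)$ that underlies the lower bound has no justification.
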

\begin{proof}[Proof of Proposition \ref{teo:circlecontdimention}]This proof has two steps. The first is to show that $Z_n$ cannot be significantly greater than $n^{1-\alpha}$. The second step focuses on finding a rectangular region $\{L<x<F\}\times \{0<y<E\}$ with two properties; The first property is that the region itself has not yet been explored, and the second property is that the point $L$ is not covered. Therefore, by Lemma \ref{teo:linecontdimention}, we will have about $n^{1-\alpha}$ empty points in this unexplored region, pushing the number of missing points to $n^{1-\alpha}$.

Observe that $\pi(\alpha)=1$, then the equation \eqref{eq:Zncomw} is trivially satisfied. Therefore, without loss of generality, fix $\alpha\in(0,1]$ so that $\pi(\alpha)<1$ for the rest of the proof. 

In order to show that $Z_n$ cannot have a greater quantity of missing points,  using first moment, we get that:
\begin{align*}
    \Ex{Z_n}=n \P{0\in \mathcal{V}\left(\omega\left[\frac{1}{n}\right]\right)}=e^{-\alpha}n^{1-\alpha}.
\end{align*}Therefore, by Markov's inequality, for every $\e>0$:
\begin{align*}
      \lim_{n\to \infty} \P{\frac{\ln Z_n}{\ln{n}}>1-\alpha +\e, \mathcal{V}(\omega)\neq \emptyset}=0.
\end{align*} Finishing one side of the proof.  

For the other side, we are going to show that for fixed $\delta>0$, and for any $\e>0$, we have that: 
\begin{align}\label{eq:boundbyapp}
      \lim_{n\to \infty} \P{\frac{\ln Z_n}{\ln{n}}<1-\alpha -\e, \mathcal{V}(\omega)\neq \emptyset}\leq \delta.
\end{align} In particular, since $\delta$ is arbitrary, we get that: 
\begin{align*}
      \lim_{n\to \infty} \P{\left|\frac{\ln Z_n}{\ln{n}}-(1-\alpha)\right| >\e, \mathcal{V}(\omega)\neq \emptyset}=0.
\end{align*} Concluding the proof. 

To obtain the bound \eqref{eq:boundbyapp}, we will approximate the event $\{\mathcal{V}(\omega)=\emptyset\}$. Start by defining the first non covered point of the space as:
\begin{align*}
    L&=\inf\left\{x\in[0,1): x\in \mathcal{V}\left(\omega\right)\right\}.
\end{align*} Also, set $L=1$ if the space is completely covered. 

The definitions of the next two random variables: $E$ and $F$ are more complex. In words, $E(L)$ will look at a set of fitted regions in which a large object may appear. The position of the $x$ axis of this object with respect to the point $L$ is defined as $F(L,E(L))$. To illustrate, see in Figure \ref{fig:7} a construction of these random variables. Moreover, the main importance of the random variable is that, given a triplet $\{(L,E,F)=(x,s,t)\}$, it will be possible to find a rectangular region that starts in $x$, ends in $t$ and has height $s$, that has not yet been explored, and with two properties: the point $x$ is not covered, and the objects that appear in this region cover points not yet discovered. More precisely, define: 
\begin{align*}
    E(x)&=\sup\{s\in \left(0,1-x\right): \omega\left([x,1-(x+s))\times [s,\infty)\right)=0\},\\
    F(x,s)&= \sup\{t\in(x,1-(x+s)): \omega\left([x,t)\times [s,\infty)\right)=0\}.
\end{align*} Also set $E(x)=0$ when the regions $[x,1]\times (0,\infty)$ is empty.

\begin{figure}[h!]
    \centering
    \includegraphics{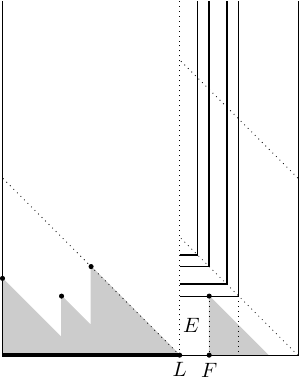}
    \caption{A representation of the random variables  $L$,$E$ and $F$. Given the point $L$, we can divide the space in three regions; one of them is empty, since $L$ is not covered, the other regions have the possibility of covering the interval $[0,L)$, the third region, is formed by objects not yet explored. The event $E$ looks to a region formed by the fitted rectangles, until they found a success draw as a point ( In the image the point is given by $(L+F,E)$). The distance of this point to $L$ is given by $F$. The gray regions, are the parts covered by object revealed in this construction}
    \label{fig:7}
\end{figure}

The main point of introducing the triplet $(L,E(L).F(L,E(L))$  is:
\begin{align}
    \Pa{\mathcal{V}(\omega)\neq \emptyset}&=\Pa{L<1,E(L)>0,F(L,E(L))>0} \nonumber \\
    &=\Pa{\bigcup_{N=1}^{\infty}\left\{L<1-\frac{1}{N},E(L)>\frac{1}{2N},F(L,E(L))>\frac{1}{2N}\right\}}.\label{eq:VELF}
\end{align} 

To prove equality \eqref{eq:VELF}, observe that the event $\{\mathcal{V}(\omega)\neq \emptyset\}$ is equal to the event $\{L<1,E(L ) \geq 0,F(L,E(L))\geq 0\}$. Since the events $\{E(L)=0\}$ and $\{F(L,E(L))=0\}$ have zero probability, we conclude the equality \eqref{VELF}. Here. we just ask for $\{E(L)>1/2N,F(L,E(L))>1/2N\}$ so when $L$ is close to $1-\frac{1}{N}$ the set of configurations that satisfy the event is not empty. 

To simplify the notation, define:
\begin{align*}
    G(N)=\left\{L<1-\frac{1}{N},E(L)>\frac{1}{2N},F(L,E(L))>\frac{1}{2N}\right\}.
\end{align*} Then, for every $\delta>0$, since we have a set of increasing events, we get that exists a $N_0(\delta)$ such that for every $N>N_0$, and for every $n\geq 0$, that: 
\begin{align*}
    \left|\P{\frac{\ln Z_n}{\ln{n}}<1-\alpha -\e, \mathcal{V}(\omega)\neq \emptyset}-\P{\frac{\ln Z_n}{\ln{n}}<1-\alpha -\e, G(N)}\right|\leq \frac{\delta}{2}.
\end{align*}

Now, with a fixed value of $N$, whenever $G(N)$ occurs, we will be able to give a lower bound to the quantity of $\frac{\ln Z_n}{\ln{n}}$. For this we are going to give a sequence of stochastic dominated random variables that will start in the random variable $Z_n$ given that the event $G(N)$ happens and ends in the random variable $Y$ from the Lemma \ref{teo:linecontdimention} given that the origin is empty. 

For this, given $\omega=\sum_{i\in I} \delta_{(x_i,r_i)}$ and $t>0$, define the shift:
\begin{align*}
    \phi_t(\omega)= \sum_{i\in I} \delta_{(t+x_i,r_i)},
\end{align*} where the sum is made in the circle. Also, for any $L\in[0,1)$, and $n>0$ define the quantity:
\begin{align*}
    r_n(L)=\inf\{r: n(L-r)\in\N\}.
\end{align*}

Now, assume that $G(N)$ occurs in some configuration $\omega$, then we will have a triplet $(L,E(L),F(L,E(L)))$, and with the triplet we can say that: 
\begin{enumerate}
    \item $L\in \mathcal{V}(\omega)$.
    \item $\omega((L,F)\times(E,1))=0.$
    \item The distribution of the objects in the region $\left(L,L+\frac{1}{2N}\right)\times \left(0,\frac{1}{2N}\right)$ is a Poisson random variable with rate $\Lambda_{\alpha}$, i.e. does not change. 
\end{enumerate}
Here, the facts $1.$ and $2.$ are a trivial consequence of the definition of the triplet $(L,E,F)$, and the claim $3.$ is true since the event $G(N)(\omega)$ given the random variable $L$, is completely determined by objects outside the region $\left(L,L+\frac{1}{2N}\right)\times \left(0,\frac{1}{2N}\right)$.

The first stochastic bound in $Z_n$ appears now, condition on the measure $G(N)$, since $\left\{F>\frac{1}{2N}\right\}$ occurs, by counting just the missing points in a smaller region, we have:
\begin{align*}
    Z_n \succeq \sum_{k=0}^{n-1} \ind \left\{\frac{k}{n}\in \mathcal{V}\left(\omega\left[\frac{1}{n}\right]\right)\right\} \ind \left\{L\leq \frac{k}{n}<L+\frac{1}{2N}\right\},
\end{align*} in the measure given the event $G(N)$. Here, $X\succeq Y$, if $X$ stochastically dominates $Y$.

Using the shift of the configuration $\omega$ to send $L$ to $0$, we can get that:
\begin{align*}
    Z_n(\omega) &\succeq \sum_{k=0}^{n-1} \ind \left\{\frac{k}{n}-r_n(L) \in \mathcal{V}\left(\phi_{-L}\left(\omega\left[\frac{1}{n}\right]\right)\right)\right\} \ind \left\{0\leq \frac{k}{n}-r(L)<\frac{1}{2N}\right\}.
\end{align*}

Now, we are close to relating $Z_n$ with the Lemma \ref{teo:linecontdimention}. The last thing to overcome is the restriction in the height $E>0$. For this, considering the condition $2.$, if we allow new independent objects to appear in $\left(L,L+\frac{1}{2N}\right)\times(E,1)$, we can cover more objects and thus diminish the value of $Z_n$ even more. Then, we have:
\begin{align*}
    Z_n &\succeq Y\left(n,\frac{1}{2N},r_n(L),1\right). 
\end{align*} where $Y$ count the number of missing points of the form $\left\{\frac{k}{n}: k\in \{0,1,\cdots n-1\}\right\}$ in the interval $\left(0,\frac{1}{2N}\right)$, and allow objects with length smaller than 1 to appear, objects that for the random variable $Z_n(\omega)$ given $G(N)$ is already known to not exists. Moreover, by Lemma \ref{teo:linecontdimention}, we concludes that exists a $n_0>0$ such that for every $n>n_0$ we get: 
\begin{align*}
    \P{\frac{\ln Z_n}{\ln{n}}<1-\alpha -\e, G(N)}\leq \frac{\delta}{2}.
\end{align*} In particular, for every $n>n_0$:
\begin{align*}
    \P{\frac{\ln Z_n}{\ln{n}}<1-\alpha -\e, \mathcal{V}(\omega)\neq \emptyset}<\delta. 
\end{align*}As desired.
\end{proof}

\begin{Cor}\label{Cor:Zncomwn}
Let, $n\in \N$, and set:
\begin{align*}
    Z_n=\sum_{k=1}^{n} \ind\left\{\frac{k}{n}\in \mathcal{V}\left(\omega\left[\frac{1}{n}\right]\right)\right\}.
\end{align*} the number of missing points of the form $k/n$ in the cylinder $S$, when we try to cover using just the truncated objects $\omega[1/n]$.  We have that, for $\alpha \in (0,1)$:
\begin{align} \label{eq:Zncomwn}
      \frac{\log{Z_n}}{\log{n}}\ind\{\mathcal{V}\left(\omega\left[\frac{1}{n}\right]\right)\neq \emptyset\}\overset{D}{\implies} (1-\alpha)\ind\{\mathcal{V}(\omega)\neq \emptyset\}.
\end{align}
\end{Cor}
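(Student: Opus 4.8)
The plan is to derive this corollary directly from Proposition~\ref{teo:circlecontdimention}, observing that the only change is in the indicator multiplying $\log Z_n/\log n$: here it is $\ind\{\mathcal V(\omega[1/n])\neq\emptyset\}$ rather than $\ind\{\mathcal V(\omega)\neq\emptyset\}$. Since removing the objects of size at most $1/n$ can only enlarge the vacant set, we have $\mathcal V(\omega)\subseteq \mathcal V(\omega[1/n])$, hence $\{\mathcal V(\omega)\neq\emptyset\}\subseteq\{\mathcal V(\omega[1/n])\neq\emptyset\}$ for every $n$. Setting $D_n:=\{\mathcal V(\omega[1/n])\neq\emptyset\}\setminus\{\mathcal V(\omega)\neq\emptyset\}$, this gives the decomposition
\[\ind\{\mathcal V(\omega[1/n])\neq\emptyset\}=\ind\{\mathcal V(\omega)\neq\emptyset\}+\ind_{D_n},\]
so the random variable in \eqref{eq:Zncomwn} coincides with the one in \eqref{eq:Zncomw} off the event $D_n$ (adopting, as implicitly done in the statement of Proposition~\ref{teo:circlecontdimention}, the convention that the whole expression equals $0$ whenever the indicator vanishes).

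The key step is then to show $\Pa{D_n}\to 0$. The events $\{\mathcal V(\omega[1/n])\neq\emptyset\}$ are nested and decreasing in $n$, because $\omega[1/n]$ gains objects as $n$ grows; by Lemma~\ref{lem:Topology} their intersection has the same probability as $\{\mathcal V(\omega)\neq\emptyset\}$. Consequently $\Pa{\mathcal V(\omega[1/n])\neq\emptyset}\downarrow\Pa{\mathcal V(\omega)\neq\emptyset}$ and $\Pa{D_n}=\pi(\alpha)-\pi_{1/n}(\alpha)\to 0$, which is precisely the content of the second half of Lemma~\ref{lem:continuityofprobabilitytruncated}.

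With these two facts the conclusion follows from the standard principle that two sequences of random variables agreeing outside an event of vanishing probability share the same limit in distribution: since $\Pa{D_n}\to 0$ and, by Proposition~\ref{teo:circlecontdimention}, $\frac{\log Z_n}{\log n}\ind\{\mathcal V(\omega)\neq\emptyset\}\overset{D}{\implies}(1-\alpha)\ind\{\mathcal V(\omega)\neq\emptyset\}$, the same distributional limit holds for $\frac{\log Z_n}{\log n}\ind\{\mathcal V(\omega[1/n])\neq\emptyset\}$. (If one prefers to avoid the $\log 0$ convention altogether, one notes that $\{Z_n=0,\ \mathcal V(\omega[1/n])\neq\emptyset\}\subseteq\{Z_n=0,\ \mathcal V(\omega)\neq\emptyset\}\cup D_n$, the first event having vanishing probability by the quantitative bound $\Pa{|\tfrac{\log Z_n}{\log n}-(1-\alpha)|>\e,\ \mathcal V(\omega)\neq\emptyset}\to 0$ extracted from the proof of Proposition~\ref{teo:circlecontdimention}, so that with probability tending to one the random variable in \eqref{eq:Zncomwn} is finite and lies in $[0,1]$.) The only real obstacle is the estimate $\Pa{D_n}\to 0$, and that is already supplied by Lemma~\ref{lem:Topology}; the rest is bookkeeping.
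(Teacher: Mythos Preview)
Your proof is correct and follows essentially the same approach as the paper: both arguments use the inclusion $\{\mathcal V(\omega)\neq\emptyset\}\subseteq\{\mathcal V(\omega[1/n])\neq\emptyset\}$, invoke Lemma~\ref{lem:Topology} to show the difference event $D_n$ has vanishing probability, and then conclude by a Slutsky-type argument that the extra indicator contributes a term going to zero in probability. Your version is more explicit about the $\log 0$ convention and the bookkeeping, but the substance is the same.
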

\begin{proof}[Proof of Corollary \ref{Cor:Zncomwn}]
 Using that the random variable $\frac{\log{Z_n}}{\log{n}}$ is bounded, and $\{\mathcal{V}(\omega)\neq \emptyset\}\subseteq \{\mathcal{V}\left(\omega\left[\frac{1}{n}\right]\right)\neq \emptyset\}$. Furthermore, by Lemma \ref{lem:Topology}, we have $\ind\{\mathcal{V}\left(\omega\left[\frac{1}{n}\right]\right)\neq \emptyset\}\overset{D}{\implies}\ind\{\mathcal{V}(\omega)\neq \emptyset\}$. Then, the proof follows directly that equation \eqref{eq:Zncomwn} is the same as equation \eqref{eq:Zncomw}, plus some term that goes to zero in probability.
\end{proof}

With the Corollary \ref{Cor:Zncomwn}, we can proof Theorem \ref{teo:2*}, for the process $X^n$ with radius distribution $\log(1+1/(r-1))$ almost directly.

\begin{Prop}\label{Prop:Final}
    Consider $X^n(\omega)$ defined in equation \ref{eq:defXlog}, then we have that:
    \begin{align*}
        \lim_{n\to \infty} \Pa{X^n(\omega)=\mathbb{Z}/n\mathbb{Z}}=\pi(\alpha).
    \end{align*} Moreover, we have that:
    \begin{align}
         \frac{\log |\mathbb{Z}/n\mathbb{Z} \setminus X^n(\omega_{\alpha})|}{\log n}\ind\{X^{n}(\omega_{\alpha}) \neq \mathbb{Z}/n\mathbb{Z}_n\}&\overset{D}{\implies} (1-\alpha)\ind\{\mathcal{V}\neq \emptyset\}.
    \end{align}
\end{Prop}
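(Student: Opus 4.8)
The plan is to reduce both assertions to the correspondence between $X^n$ and the truncated Mandelbrot--Shepp model, together with Proposition~\ref{teo:circlecontdimention}, Corollary~\ref{Cor:Zncomwn} and Lemma~\ref{lem:continuityofprobabilitytruncated}. The first step I would carry out is the deterministic identity, valid for every configuration $\omega$,
\begin{align*}
    \{X^n(\omega)=\mathbb{Z}/n\mathbb{Z}\}=\{Z_n=0\}
    \quad\text{and}\quad
    |\mathbb{Z}/n\mathbb{Z}\setminus X^n(\omega)|=Z_n,
\end{align*}
where $Z_n=\sum_{k=1}^{n}\ind\{k/n\in\mathcal{V}(\omega[1/n])\}$ is the quantity from Corollary~\ref{Cor:Zncomwn}. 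This is precisely where the radius distribution $\log(1+1/(r-1))$ is used: by the definition of the regions $R_{\mathcal{O}(\ell,k)}$, an integer $j$ belongs to $X^n(\omega)$ if and only if $j/n\in\mathcal{C}(\omega[1/n])$, because the set of points of $P_n=\{\ell/n:\ell\in\{0,\dots,n-1\}\}$ contained in the projection of a single object of $\omega$ of height at least $1/n$ is exactly some arc $\mathcal{O}(\ell,k)$, and conversely every such arc arises this way. Granting this, the two claims become $\Pa{Z_n=0}\to\pi(\alpha)$ and $\tfrac{\log Z_n}{\log n}\ind\{Z_n\geq 1\}\overset{D}{\implies}(1-\alpha)\ind\{\mathcal{V}(\omega)\neq\emptyset\}$.

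For the first limit I would prove the two bounds separately. Since $\{\mathcal{C}(\omega[1/n])=[0,1)\}\subseteq\{Z_n=0\}$, one has $\Pa{Z_n=0}\geq\pi_{1/n}(\alpha)$, hence $\liminf_n\Pa{Z_n=0}\geq\pi(\alpha)$ by Lemma~\ref{lem:continuityofprobabilitytruncated}. For the matching upper bound, when $\alpha\geq1$ Proposition~\ref{prop:sat} gives $\pi(\alpha)=1$ and there is nothing to prove; when $\alpha<1$ I would write $\Pa{Z_n=0}\leq\pi(\alpha)+\Pa{Z_n=0,\ \mathcal{V}(\omega)\neq\emptyset}$ and note that the last term vanishes, because on $\{\mathcal{V}(\omega)\neq\emptyset\}$ Proposition~\ref{teo:circlecontdimention} forces $Z_n\geq n^{1-\alpha-\e}$ with high probability (taking $\e=(1-\alpha)/2$ gives $\Pa{Z_n<n^{(1-\alpha)/2},\ \mathcal{V}(\omega)\neq\emptyset}\to0$, and $\{Z_n=0\}$ is contained in that event for $n$ large). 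Thus $\limsup_n\Pa{Z_n=0}\leq\pi(\alpha)$, which together with the previous inequality gives $\lim_n\Pa{X^n(\omega)=\mathbb{Z}/n\mathbb{Z}}=\pi(\alpha)$.

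For the distributional limit I would compare $\tfrac{\log Z_n}{\log n}\ind\{Z_n\geq1\}$ with the random variable $\tfrac{\log Z_n}{\log n}\ind\{\mathcal{V}(\omega[1/n])\neq\emptyset\}$ of Corollary~\ref{Cor:Zncomwn}. The two agree off the event $\{\mathcal{V}(\omega[1/n])\neq\emptyset,\ Z_n=0\}$; since $\{Z_n\geq1\}\subseteq\{\mathcal{V}(\omega[1/n])\neq\emptyset\}$ we have $\Pa{\mathcal{V}(\omega[1/n])\neq\emptyset,\ Z_n=0}=\Pa{\mathcal{V}(\omega[1/n])\neq\emptyset}-\Pa{Z_n\geq1}$, and this tends to $0$ because $\Pa{\mathcal{V}(\omega[1/n])\neq\emptyset}\to\Pa{\mathcal{V}(\omega)\neq\emptyset}$ by Lemma~\ref{lem:Topology}, while $\Pa{Z_n\geq1}=1-\Pa{Z_n=0}\to1-\pi(\alpha)=\Pa{\mathcal{V}(\omega)\neq\emptyset}$ by the previous paragraph. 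Hence the two sequences differ by a term converging to $0$ in probability and share the limit $(1-\alpha)\ind\{\mathcal{V}(\omega)\neq\emptyset\}$ supplied by Corollary~\ref{Cor:Zncomwn}, which is exactly the second assertion.

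All the probabilistic content is already inside Proposition~\ref{teo:circlecontdimention}; the work here is the bookkeeping among the three events $\{\mathcal{V}(\omega)\neq\emptyset\}$, $\{\mathcal{V}(\omega[1/n])\neq\emptyset\}$ and $\{Z_n\geq1\}$, and the only genuinely new point is the deterministic identity of the first step. I expect the one delicate part to be making that identity airtight, in particular handling objects of radius larger than $1$ (whose projection is all of $[0,1)$) and the wrap-around arcs; no real obstacle is anticipated beyond that.
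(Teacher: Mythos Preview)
Your proof is correct and follows essentially the same route as the paper. Both arguments rest on the deterministic identity $\{X^n(\omega)=\mathbb{Z}/n\mathbb{Z}\}=\{Z_n=0\}$ (equivalently $|\mathbb{Z}/n\mathbb{Z}\setminus X^n(\omega)|=Z_n$), and then feed this into Lemma~\ref{lem:continuityofprobabilitytruncated} and Corollary~\ref{Cor:Zncomwn}.

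The only cosmetic differences are: for the upper bound the paper writes the exact decomposition $\Pa{Z_n=0}=\pi_{1/n}(\alpha)+\Pa{Z_n=0,\ \mathcal{V}(\omega[1/n])\neq\emptyset}$ and lets the second term vanish implicitly, whereas you bound $\Pa{Z_n=0}\leq\pi(\alpha)+\Pa{Z_n=0,\ \mathcal{V}(\omega)\neq\emptyset}$ and kill the remainder explicitly via Proposition~\ref{teo:circlecontdimention} (with a clean case split at $\alpha=1$). For the distributional limit the paper simply invokes Corollary~\ref{Cor:Zncomwn}, while you additionally justify the swap of indicators $\ind\{Z_n\geq 1\}$ versus $\ind\{\mathcal{V}(\omega[1/n])\neq\emptyset\}$ by showing their difference has vanishing probability; this is a detail the paper leaves tacit. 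In short, your write-up is slightly more careful but not a different argument.
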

\begin{proof}[Proof of Proposition \ref{Prop:Final}]
By the inclusion created by the coupling:
\begin{align*}
    \liminf_{n\to \infty} \P{X^n(\omega)= \mathbb{Z}/n\mathbb{Z}}\geq \liminf_{n\to \infty} \pi_{\frac{1}{n}}(\alpha)=\pi(\alpha),
\end{align*} Now, with the set $P_n=\left\{\frac{\ell}{n}\in [0,1): p\in\{0,1,2,...,n-1\right\}$, observe that:
\begin{align*}
    \Pa{X^n(\omega)= \mathbb{Z}/n\mathbb{Z}}&= \Pa{\mathcal{C}\left(\omega\left[\frac{1}{n}\right]\right)=[0,1)}+ \Pa{\mathbb{V}\left(\omega\left[\frac{1}{n}\right]\right)\neq \emptyset, P_n\subset \mathcal{C}\left(\omega\left[\frac{1}{n}\right]\right)} \\
    &=\pi_{\frac{1}{n}}(\alpha)+\P{Z_n=0, \mathcal{V}\left(\omega\left[\frac{1}{n}\right]\right)\neq \emptyset}.
\end{align*} In particular, by Lemma \ref{lem:continuityofprobabilitytruncated}, we can take the limit, and get that: 
\begin{align*}
    \lim_{n\to \infty} \P{X^n(\omega)= \mathbb{Z}/n\mathbb{Z}}=\pi(\alpha).
\end{align*}as desired. To finish the proof apply directly the Corollary \ref{eq:Zncomwn}, since $|\mathbb{Z}/n\mathbb{Z} \setminus X^n(\omega_{\alpha}))|$ is equal to the random variable $Z_n$. 
\end{proof}

The proof of Theorem \ref{teo:2*} is almost over, to complete the proof we need to connect the random process $X^n$ with the process $W^n$.

\paragraph{The process $W^n$} Unlike $X^n$, the process $W^n$ does not have a direct connection to the Mandelbrot-Shepp model. By construction, it is entirely plausible that the process $W^n$ is fully covered while the Mandelbrot-Shepp model is not, or, in another direction, it is possible that the Mandelbrot-Shepp model covers the set $P_n$ while $W^n$ is not completely covered. Despite the complications, coverage $X^n$ can be related to coverage $W^n$ in a simple way done in Proposition \ref{Prop:Final}, completing the proof of Theorem \ref{teo:2*} .

Observe that one side of the relation between $X^n$ with $W^n$ is trivial. Using Figure \ref{fig:5},  comparing the radius distribution in the coupling, one can conclude that the objects in $W^n$ have a radius of the same size or just one unit smaller than the radius of the objects in $X^n$. In particular, one may get that:
\begin{align}\label{eq:coveringXlowerY}
    \Pa{W^n(\omega)=\mathbb{Z}/n\mathbb{Z}}\leq \Pa{X^n(\omega)=\mathbb{Z}/n\mathbb{Z}}.
\end{align}

To finish the proof now, we need to prove that the covering using the law $X^n$, or analogously, the covering of the Mandelbrot-Shepp model, has the following high probability property: If the space is covered, then it is also covered by changing the sizes of all objects by one unit.

\begin{Prop}\label{Prop:Final2}
The sequence of probabilities:
\begin{align*}
   \left(\Pa{W^n(\omega)=\mathbb{Z}/n\mathbb{Z}}- \Pa{X^n(\omega)=\mathbb{Z}/n\mathbb{Z}}\right)_n 
\end{align*} is Cauchy. In particular: 
    \begin{align*}
    \lim_{n\to \infty} \Pa{W^n(\omega)=\mathbb{Z}/n\mathbb{Z}}=\pi(\alpha). 
    \end{align*}Moreover, we have that:
        \begin{align*}
         \frac{\log |\mathbb{Z}/n\mathbb{Z} \setminus W^n(\omega_{\alpha})|}{\log n}\ind\{W^{n}(\omega_{\alpha}) \neq \mathbb{Z}/n\mathbb{Z}_n\}&\overset{D}{\implies} (1-\alpha)\ind\{\mathcal{V}\neq \emptyset\}.
    \end{align*}
\end{Prop}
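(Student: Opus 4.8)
The plan is to pin down $\lim_{n\to\infty}\Pa{W^n(\omega)=\Z/n\Z}$. Combining \eqref{eq:coveringXlowerY} with Proposition~\ref{Prop:Final} already gives $\limsup_n\Pa{W^n(\omega)=\Z/n\Z}\le\pi(\alpha)$, so everything reduces to showing $\Pa{X^n(\omega)=\Z/n\Z}-\Pa{W^n(\omega)=\Z/n\Z}=\Pa{X^n(\omega)=\Z/n\Z,\ W^n(\omega)\neq\Z/n\Z}\to0$; this immediately makes the difference sequence Cauchy and forces $\Pa{W^n(\omega)=\Z/n\Z}\to\pi(\alpha)$ via Proposition~\ref{Prop:Final}. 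I would split
\[
\Pa{X^n=\Z/n\Z,\ W^n\neq\Z/n\Z}\le\Pa{X^n=\Z/n\Z,\ \mathcal{V}(\omega)\neq\emptyset}+\Pa{W^n\neq\Z/n\Z,\ \mathcal{V}(\omega)=\emptyset}.
\]
The first term vanishes: $\{X^n=\Z/n\Z\}=\{Z_n=0\}$ by Proposition~\ref{Prop:Final}, while $\Pa{Z_n=0,\ \mathcal{V}(\omega)\neq\emptyset}\to0$ by Proposition~\ref{teo:circlecontdimention} (there $\log Z_n/\log n\to 1-\alpha>0$ on $\{\mathcal{V}(\omega)\neq\emptyset\}$, so $Z_n\to\infty$ there).

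The crux is that the second term also vanishes, i.e. that whenever the Mandelbrot--Shepp model covers $\mathbb{S}^1$, the process $W^n$ covers $\Z/n\Z$ for all $n$ large (depending on $\omega$). For this I would use a compactness/robustness argument: on $\{\mathcal{C}(\omega)=\mathbb{S}^1\}$ the open arcs $\{\Pi(\xi_i)\}_{i\in I}$ cover the compact circle, so a random finite sub-collection indexed by $F(\omega)$ already covers $\mathbb{S}^1$, and a Lebesgue-number argument yields a random $\varepsilon_0(\omega)>0$ such that these arcs still cover $\mathbb{S}^1$ after each is shrunk by $\varepsilon_0(\omega)$ at either end. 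On the other hand, in the coupling the $W^n$-object coming from $\xi_i=(x_i,r_i)$ has continuous footprint $\big[\lfloor nx_i\rfloor/n,\ (\lfloor nx_i\rfloor+\lfloor nr_i\rfloor)/n\big)$, which contains $\Pi(\xi_i)$ with its right endpoint pulled in by at most $2/n$. Hence for $n$ with $2/n<\varepsilon_0(\omega)$ (and large enough that every $\xi_i$, $i\in F(\omega)$, survives the $1/n$ truncation) the $W^n$-footprints of $F(\omega)$ still cover $\mathbb{S}^1\supseteq P_n$, so those $W^n$-objects already cover $\Z/n\Z$. Thus $\{\mathcal{C}(\omega)=\mathbb{S}^1\}\subseteq\liminf_n\{W^n(\omega)=\Z/n\Z\}$, and since $\ind\{\mathcal{V}(\omega)=\emptyset,\ W^n\neq\Z/n\Z\}\to0$ almost surely, dominated convergence gives $\Pa{W^n\neq\Z/n\Z,\ \mathcal{V}(\omega)=\emptyset}\to0$. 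Together with the first estimate this yields $\Pa{W^n(\omega)=\Z/n\Z}\to\pi(\alpha)$.

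For the distributional statement I would squeeze $R_n:=\frac{\log|\Z/n\Z\setminus W^n(\omega_\alpha)|}{\log n}\ind\{W^n(\omega_\alpha)\neq\Z/n\Z\}$. The coupling gives $|\Z/n\Z\setminus W^n|\ge|\Z/n\Z\setminus X^n|=Z_n$ and $\{W^n\neq\Z/n\Z\}\supseteq\{Z_n\ge1\}$, whence $R_n\ge\frac{\log Z_n}{\log n}\ind\{Z_n\ge1\}$, and the right-hand side converges in distribution to $(1-\alpha)\ind\{\mathcal{V}\neq\emptyset\}$ by Corollary~\ref{Cor:Zncomwn}. For an upper bound I would compute $\Ex{|\Z/n\Z\setminus W^n(\omega_\alpha)|}=n\,\Pa{0\notin W^n}=n\exp\{-\alpha H_n\}=\Theta(n^{1-\alpha})$, where $H_n=\sum_{k=1}^n 1/k$ and the Poisson rate for a $W^n$-object to contain $0$ is $\sum_{k\ge1}\min(k,n)\,\alpha\big(\tfrac1k-\tfrac1{k+1}\big)=\alpha H_n$; Markov then gives $\Pa{R_n>1-\alpha+\varepsilon}\le\Pa{|\Z/n\Z\setminus W^n|>n^{1-\alpha+\varepsilon}}\to0$ for every $\varepsilon>0$. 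Finally $\{R_n=0\}=\{W^n=\Z/n\Z\}$ together with $\Pa{W^n=\Z/n\Z}\to\pi(\alpha)=\Pa{\mathcal{V}=\emptyset}$ handles the atom at $0$, and one checks that $\Pa{R_n\le t}$ converges to the law of $(1-\alpha)\ind\{\mathcal{V}\neq\emptyset\}$ at every continuity point $t\notin\{0,1-\alpha\}$, which is the asserted convergence.

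I expect the robustness/compactness step to be the main obstacle: one must check that the $W^n$-discretization perturbs each continuous arc only by $O(1/n)$ and in a controlled direction, so that a finite subcover of $\mathbb{S}^1$ genuinely descends to a covering of the discrete torus for $n$ large; the remaining ingredients are routine first-moment bounds and Corollary~\ref{Cor:Zncomwn}.
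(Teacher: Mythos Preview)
Your argument is correct and rests on the same robustness idea as the paper, but you package it differently. For the main limit, the paper introduces the multiplicative shift $\omega*h$ and the random variable $G=\sup\{\eta:\mathcal{C}(\omega*(1-\eta))=[0,1)\}$, shows $\{G>0\}=\{\mathcal{V}=\emptyset\}$ a.s., and then sandwiches $\Pa{W^n=\Z/n\Z}$ between $\Pa{X^n=\Z/n\Z}$ and a quantity built from $\{G>2\eta\}$. Your compactness/Lebesgue-number route (finite subcover, shrink by $\varepsilon_0(\omega)$, then note each $W^n$-footprint contains $\Pi(\xi_i)$ pulled in by $2/n$) is the same robustness statement phrased more directly, and concluding via a.s.\ convergence of indicators plus dominated convergence is arguably cleaner than the paper's probability approximation. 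For the distributional part the paper argues loosely that ``the vacant set of $W^n$ is bounded by a constant times the size of the vacant set of $X^n$'' since objects differ by at most one unit; your approach---lower bound $|\Z/n\Z\setminus W^n|\ge Z_n$ from the coupling together with a first-moment Markov bound $\Ex{|\Z/n\Z\setminus W^n|}=ne^{-\alpha H_n}=\Theta(n^{1-\alpha})$---is more explicit and avoids that step.

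One small point to make precise: the pointwise inclusion $W^n(\omega)\subseteq X^n(\omega)$ is \emph{not} true in this coupling (the $W^n$-object from $(x,r)$ starts at $\lfloor nx\rfloor$ while the $X^n$-arc starts at $\lfloor nx\rfloor+1$). What does hold is $W^n+1\subseteq X^n$ a.s., since $\{\lfloor nx\rfloor+1,\dots,\lfloor nx\rfloor+\lfloor nr\rfloor\}\subseteq\{\lfloor nx\rfloor+1,\dots,\lfloor nx+nr\rfloor\}$; torus translation invariance then gives your inequalities $|\Z/n\Z\setminus W^n|=|\Z/n\Z\setminus(W^n+1)|\ge Z_n$ and $\{W^n\neq\Z/n\Z\}\supseteq\{Z_n\ge1\}$. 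Also note $\{R_n=0\}=\{|\Z/n\Z\setminus W^n|\le1\}$ rather than $\{W^n=\Z/n\Z\}$, but $\Pa{|\Z/n\Z\setminus W^n|=1}\to0$ follows from the same two-term split you already use, so the atom at $0$ is unaffected.
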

\begin{proof} 
The crucial part of the proof lies on noticing that: In both constructions, as $n$ grows, the set of regions $R$ and $\widehat{R}$ for the objects becomes thinner. And, compared to the process $X^n(\omega)$, the objects in $W^n(\omega)$ are equal in size or have one unit smaller.  

Consider an arbitrary configuration $\omega$ that covers the space. By Lemma \ref{lem:Topology}, there exists some value of $n$ for which $\omega[1/n]$ also covers the space; this implies that we can extract a covering with a finite number of objects. Now, considering a larger value of $m$ (where $m > n$), we observe that $X^m(\omega)$ continues to cover the space. However, this time it accomplishes this using only the objects present in $\omega[1/n]$. In particular, as we focus solely on these larger objects, the objects in $W^m(\omega)$ progressively approach and align with the objects in $\omega[1/n]$ as $m$ goes to infinity. As a consequence, we can reasonably expect that $W^m(\omega)$ covers the space as well for large values of $m$.

To get a reasonable limit on how large $m$ should be, we need to understand the properties of the configurations that cover the space at height $n$. For each configuration $\omega=\sum_{i\in I} \delta_{(x_i,y_i)}$, define the shift operation:
\begin{align*}
    \omega*h= \sum_{i\in I} \delta_{(x_i,hy_i)}.
\end{align*}Define also the random variable $G$ that measures how much one can shift down a configuration while still covering it, this is: 
\begin{align*}
    G=\sup\{\eta\in(0,1): \mathcal{C}(\omega*(1-\eta))=[0,1)\} 
\end{align*} and set $G=0$ if the space is not covered.

Informally, the random variable $G$ is a measure of how stable coverage is in the Mandelbrot-Shepp model. Notice that for each $\alpha>0$, we have that $\omega_{\alpha}*(1-\eta)\sim \omega_{\alpha(1-\eta)}$, thus shrink the space implies changing its space rate. Then, when the space is covered in some truncated level $\omega[1/n]$. Evaluating $\mathcal{C}(\omega[1/n])$, the space is covered almost surely by a finite union of open intervals, each interval having at least two intersections: one at the beginning of the interval and the other at the end. Analyzing the configuration $\mathcal{C}(\omega[1/n]*(1-\eta))$, each object shrinks and, in particular, the size of the intersections also diminish. The value of $G$ will correspond to how much we can shrink the objects' size and still see covering; or analogously, is an evaluation of how much one can change the rate and still see covering. 

Notice that, by Lemma \ref{lem:Topology}, the event $\{G=0\}$ happens, if and only if, we do not cover the space. Thus:
\begin{align*}
    \P{G>0|\mathcal{V}=\emptyset}=1.
\end{align*}  Therefore, since the events are increasing, we get that $\P{G>\eta}$ converges to $\P{\mathcal{V}=\emptyset}$ when $\eta$ goes to zero.  Thus, for fixed value of $\alpha>0$, and every $\e>0$, exists $\eta=\eta(\alpha)$ such that covering and having a positive value of $G$ is close in probability, this is: 
 \begin{align}\label{eq:piGsmall}
     |\pi(\alpha)-\Pa{G>2\eta}|<\e/2.
 \end{align}

In equation \eqref{eq:piGsmall}, with $P_n=\{k/n: k\in \{0,1,\cdots,n-1\}\}$, the probability of covering $P_n$ is $\P{X^n(\omega_{\alpha}=\mathbb{Z}/n\mathbb{Z}}$ and this probability converges to $\pi(\alpha)$ by Proposition \ref{Prop:Final}. To approach $\Pa{G>2\eta}$, notice that if for every $n$, we have that when $\{P_n\subset  \mathcal{C}\left(\omega_{\alpha}*(1-\eta)\left[\frac{1}{n}\right]\right)\}$ does not happen, then $\{\mathcal{V}\left(\omega_{\alpha}*(1-\eta)[1/n]\right)\neq \emptyset\}$, and by Lemma \ref{lem:Topology}, we get $\{\mathcal{V}\left(\omega_{\alpha}*(1-\eta)\right)\neq \emptyset\}$ happens. Moreover, since $\{G\geq 2\eta\}$ happens if and only if $\omega_{\alpha}*(1-\eta')$ is covered for every $\eta'<\eta$. Then, we get that, asking for a diminish of $\eta$ ( instead of $2\eta$, to avoid a topology problem in the boundary of the objects), we get:  
\begin{align*}
    \lim_{n\to \infty} \P{P_n\subseteq \mathcal{C}\left(\omega_{\alpha}*(1-\eta)\left[\frac{1}{n}\right]\right)\middle |G>2\eta}=1.
\end{align*}

Applying the limits in equation \eqref{eq:piGsmall}, we have:
\begin{align*}
  \limsup_n \left|\P{X^n(\omega_{\alpha})=\mathbb{Z}/n\mathbb{Z}}-\P{P_n\subseteq \mathcal{C}\left(\omega_{\alpha}*(1-\eta)\left[\frac{1}{n}\right]\right),G>2\eta}\right|<\e. 
\end{align*} In particular, whenever $X^n(\omega_{\alpha})$ covers the space and $n$ is large, we can diminish the size of the objects, i.e.  $X_{\alpha n}(\omega*(1-\eta))=\mathcal{C}\left(\omega_{\alpha}*(1-\eta)\left[\frac{1}{n}\right]\right)$, and still cover with high probability. Since this property starts at some value of $n$, and will be also true for bigger values. Then, comparing with the covering $W^n$, and using equation \eqref{eq:coveringXlowerY} we get that, when $n$ is large enough, both models cover the space with high probability. More precisely, we have that for $n$ sufficient large:
\begin{align*}
   \Pa{X^n(\omega)=\mathbb{Z}/n\mathbb{Z}}&\geq \Pa{W^n(\omega)=\mathbb{Z}/n\mathbb{Z}}\geq \P{X^n(\omega_{\alpha}*(1-\eta))=\mathbb{Z}/n\mathbb{Z}}\\
    &\geq \P{P_n\subseteq \mathcal{C}\left(\omega_{\alpha}*(1-\eta)\left[\frac{1}{n}\right]\right),G>2\eta},
\end{align*} In particular, we get for every $\e>0$ that:
\begin{align*}
  \limsup_n \left|\Pa{W^n(\omega)=\mathbb{Z}/n\mathbb{Z}}- \Pa{X^n(\omega)=\mathbb{Z}/n\mathbb{Z}}\right|<\e. 
\end{align*} Concluding that the sequence is Cauchy. 

To finish the proposition, we need to show the limit of distribution of the number of missing points in the covering $W^n$, for this just observe that whenever $W^n$ does not cover the torus in the limit when $n\to \infty$, neither does $X^n$. In particular, by construction, since the vacant set of $W^n$ is bounded by a constant times the size of the vacant set of $X^n$ ( the object has the same size, or one unit less). We get that both satisfy the same limit in distribution, since the constant does not change the limit. 
\end{proof}
\section{Open questions} \label{subsec:Open} \noindent

During this paper, we encountered many questions that were left open. To state them, consider the cover process $X_t$, with radius $R$ distributed as $f(r)$, then:  
\begin{enumerate}
    \item Describe the exact number of different phases of the covering process in the one dimensional case. To be more specific, complete the Gumbel Phase, showing that if $R\in L^1$, there exists a constant $c$ such that $\frac{T_n}{n\ln{n}}$ converges in probability to $c$, and find the limits in distribution for all function on the form $f(r)=\frac{1}{r\ln^b(r)}$, with $b>1$; In between the Gumbel phase and the compact phase it is expected a new phase, for this take $f(r)=\frac{1}{r\ln{r}\ln{\ln{r}}}$ and find its limit in distribution (notice that it is not expected to be like Gumbel since it is not have first moment, and $1/f(n)>n\ln{n}$, so it is not expected to be in the same scale as the compact phase; In the compact phase, find the exact conditions or proof that conditions \eqref{eq:Star} and \eqref{eq:Triangle} fully describe this phase. 
    \item Find the explicit function of $\pi$.
    \item The covering problem makes sense in high dimensions, and all the questions solved by this paper are partially unknown for dimension bigger then two. 
    \item Changing the shape of the object give rise to interesting questions. For instance, if consider objects with shape of random walks, one can recover the covering time for  random interlacements.  
\end{enumerate}  
\section{Appendex}
\label{sec:appendix}
\subsection{Useful propositions}
\label{subsec:Usefulprop}

Here, we are going to fill out some details that is left from the proof, or just enunciate them and indicate where the proof is. 

\begin{proof}[Proof of Lemma \ref{Lem:EqvH}]
To proof that $1$ is equivalent to $2$, observe that $R$ is discrete. Then, if $R\in \mathrm{L}^p(\R)$ for some $p>1$:
\begin{align*}
    \Ex{R^p}&=\sum_{y=1}^{\infty} y^p \P{R=y}= \int_0^{\infty} p \frac{y^pf(y)}{y} dy.
\end{align*}Therefore, if $y^pf(y)\to 0$, then for every $p'\in[1,p)$, it is true that $\Ex{R^{p'}}<\infty$. And if, $\Ex{R^p}<\infty$ for some $p>1$, then by Markov $y^pf(y)<\Ex{R^p}$, and for every $p''\in[1,p)$ it is true that $y^{p''}f(y)\to 0.$

To prove that $2$ equivalent to $1$ just check that for every $\lambda>0$, exists a $k_0=k_0(\lambda)$ such that for every $k>k_0$:
\begin{align*}
    f(k)k^{1+\frac{\lambda}{2}}<f(k)k^{1+\frac{\lambda}{2}}\ln{k}< f(k)k^{1+\lambda}<f(k)k^{1+\lambda}\ln{k}.
\end{align*} Finish the proof of the Lemma.
\end{proof}

\begin{proof}[Proof of Lemma \ref{Lem:Gumbel-3}]
    By straightforward calculations, one may get: 
    \begin{align*}
        \P{T_K^{\ell}< t}&=\P{\max_{k=1,...,K} \{\xi_k\}<t}=\prod_{k=1}^K \P{\xi_k<t}=(1-e^{-\frac{tp}{K}})^K. 
    \end{align*} Therefore:
    \begin{align*}
        \P{\frac{p}{K}T_K^{\ell}-\ln{K}< t}&= \left(1-\exp\left\{\frac{- Kp(t+\log{K})}{Kp}\right\}\right)^K= \left(1-\frac{e^{t}}{K}\right)^K
    \end{align*} That equation converges to the Gumbel distribution when $K$ goes to infinity, and does not depend on the parameter $p$. 
\end{proof}

\begin{proof}[Proof of the Lemma \ref{lem:poissoncontrol}]
For a Poisson Random variable $X$ with rate $\lambda$, using concentration, \cite{CCPPP}, it is true that for every $x>0$: 
\begin{align}\label{Chernoff}
    \P{|X-\lambda|>x}<2\exp\left\{-\frac{x^2}{2(\lambda+x)}\right\},
\end{align}

Now, with $\alpha>0$ notice that $\omega(R_n^1)$ is a Poisson random variable with rate $\alpha n$, and $\omega(R_n^{\delta})$ is a Poisson random variable with rate $ \alpha n/\delta$. So, we have:
\begin{align*}
    \mathbb{P}_{1+\e}\left({|\omega(R_n^1)-n(1+\e)|>\frac{\e}{2}n}\right) &\leq 2\exp\left\{-n\frac{\e^2}{2(2+3\e)}\right\}, \text{ and }\\
     \mathbb{P}_{1-\e}\left({\left|\omega(R_n^{\delta})-\frac{n}{1+\e}\right|>\frac{\e}{2(1+\e)}n}\right) & \leq 2\exp\left\{-n\frac{\e^2}{4(1+\e)(2(1-\e^2)+\e)}\right\}
\end{align*} Therefore
\begin{align*}
    \mathbb{P}_{1+\e}\left(\omega(R_n^1)<n\right)&\leq 2\exp\left\{-n\frac{\e^2}{4(2+3\e)}\right\},\text{ and}\\ 
    \mathbb{P}_{1-\e}\left(\omega(R_n^\delta)>n\right)&\leq 2\exp\left\{-n\frac{\e^2}{4(1+\e)(2(1-\e^2)+\e)}\right\}.
\end{align*} Since, both of them are summable, we conclude the Lemma.
\end{proof}

Some basic propositions about Regular Variation functions is expose here, the proof of them can be found in \cite{SIR}.

\begin{Prop}\label{prop:RVresults}
Then the following hold:
\begin{enumerate}
    \item If $L \in RV_0$, we have that $\lim_{x \to \infty} L(tx)/L(x)=1$ uniformly on each compact $t-$set of $(0,\infty)$.
    \item For every $U(x)\in \mathrm{RV}_p$, we have that $U(x)x^{-p}$ is slowly varying. Therefore, $U(x)=x^pL(x)$, for some $L\in \mathrm{RV}_0$.
    \item If $L \in RV_0$, then for every $\alpha>0$, $x^{\alpha}L(x)\to \infty$, and $x^{-\alpha}L(x)\to 0$.
    \item If $L \in RV_0$, then for every $\alpha> 0$, $(L(x))^{\alpha}$ and  $(L(x))^{-\alpha}$ are slowly varying.
    \item Let $L, L' \in RV_0$, then $L +L'$ and $LL'$ are slowly varying.
\end{enumerate}
\end{Prop}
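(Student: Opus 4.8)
The plan is to treat the five items separately, since only one of them has real content. Items 2, 4 and 5 follow in one line each from the definition \eqref{eq:RV}; item 3 will be deduced from item 1; and item 1 --- the Uniform Convergence Theorem for slowly varying functions --- is the genuinely non-trivial statement, for which I would cite \cite{SIR} rather than reproduce the classical argument. I will carry out the easy reductions first and isolate item 1 as the obstacle.

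Items 2, 4 and 5 are immediate. If $U\in\mathrm{RV}_p$ and $L(x):=U(x)x^{-p}$, then $L(tx)/L(x)=\big(U(tx)/U(x)\big)t^{-p}\to t^{p}t^{-p}=1$, so $L\in\mathrm{RV}_0$ and $U(x)=x^{p}L(x)$, which is item 2. For item 4, $\big(L(tx)\big)^{\pm\alpha}/\big(L(x)\big)^{\pm\alpha}=\big(L(tx)/L(x)\big)^{\pm\alpha}\to 1$. For item 5, the product is handled by multiplicativity of limits, while for the sum one writes
\[
\frac{L(tx)+L'(tx)}{L(x)+L'(x)}=\lambda(x)\,\frac{L(tx)}{L(x)}+\big(1-\lambda(x)\big)\,\frac{L'(tx)}{L'(x)},\qquad \lambda(x)=\frac{L(x)}{L(x)+L'(x)}\in[0,1],
\]
a convex combination of two quantities that each tend to $1$, hence tending to $1$.

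For item 3, set $g(x)=\log L(e^{x})$, so that slow variation of $L$ says $g(x+u)-g(x)\to 0$ for every fixed $u$, and by item 1 this convergence is uniform for $u$ in a compact set. Writing $g(x)=g(\lfloor x\rfloor)+\big(g(x)-g(\lfloor x\rfloor)\big)$ and $g(\lfloor x\rfloor)=g(0)+\sum_{k=0}^{\lfloor x\rfloor-1}\big(g(k+1)-g(k)\big)$, a Ces\`aro argument for the sum (averages of a null sequence vanish) together with the uniform bound on the fractional-part increment $g(x)-g(\lfloor x\rfloor)$ gives $g(x)/x\to 0$, i.e. $\log L(x)/\log x\to 0$. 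Consequently
\[
x^{\alpha}L(x)=\exp\!\left(\log x\left(\alpha+\frac{\log L(x)}{\log x}\right)\right)\to+\infty,
\]
since the bracket tends to $\alpha>0$; replacing $\alpha$ by $-\alpha$ gives $x^{-\alpha}L(x)\to 0$. (Alternatively this is a special case of Potter's bounds, or follows from the Karamata representation theorem, cf. \cite{SIR}.)

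The main obstacle is item 1. With $g(x)=\log L(e^{x})$ as above, one must upgrade the pointwise statement $g(x+u)-g(x)\to 0$ to uniformity in $u$ over compacts. The classical route is: fix $\varepsilon>0$, consider the measurable increasing sets $E_{n}=\{u:\ |g(x+u)-g(x)|\le\varepsilon\ \text{for all}\ x\ge n\}$, whose union is $\mathbb{R}$; invoke the Baire category theorem to find some $E_{N}$ that is non-meagre; deduce via a Steinhaus-type argument that $E_{N}-E_{N}\supseteq(-\eta,\eta)$ for some $\eta>0$; and then propagate control from $(-\eta,\eta)$ to an arbitrary compact $u$-interval by a finite covering and iteration. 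Translating back through $t=e^{u}$ yields the statement as phrased. Since this is a standard result in the theory of regular variation, in the write-up I would simply cite \cite{SIR}; I flag it here as the only part of the proposition whose proof is not a one-line consequence of \eqref{eq:RV}.
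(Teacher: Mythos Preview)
Your proposal is correct, and in fact it goes well beyond what the paper does: the paper does not prove Proposition~\ref{prop:RVresults} at all. It simply states that these are basic facts about regular variation and refers to \cite{SIR} for all five items. So there is no ``paper's own proof'' to compare against here.

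Your one-line verifications of items 2, 4 and 5 are clean and correct; the convex-combination trick for $L+L'$ is a nice way to avoid assuming anything about $\lambda(x)=L(x)/(L(x)+L'(x))$ beyond $\lambda(x)\in[0,1]$. Your reduction of item 3 to item 1 via $g(x)=\log L(e^{x})$ and a Ces\`aro average is also correct; note that you genuinely use the uniformity from item 1 only to control the fractional-part increment $g(x)-g(\lfloor x\rfloor)$, while the telescoping sum needs only the pointwise statement $g(k+1)-g(k)\to 0$. Your sketch of the Baire--Steinhaus route to item 1 is the standard one. Since the paper is content to outsource the whole proposition to \cite{SIR}, you could do the same without loss; what you wrote is strictly more informative.
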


Also proof in \cite{SIR}, Karamata's theorem, and Karamata's Representation Theorem

\begin{Theo}[Karamata's Theorem]\label{teo:Karamatastheorem} Considering the space of regular varying functions, we have that: 

(a) If $p\geq -1$ then $U\in \mathrm{RV}_p$ implies $\int_0^x U(t)dt\in \mathrm{RV}_{p+1}$ and
\begin{align} \label{eq:Karamatasequation}
    \lim_{x\to \infty} \frac{xU(x)}{\int_0^x U(t)dt}=p+1.
\end{align} If $p<-1$ (or if $p=-1$ and $\int_x^{\infty} U(s)ds<\infty$) then $U\in \mathrm{RV}_p$ implies $\int_x^\infty U(t)dt$ is finite, $\int_x^{\infty} U(t)dt\in \mathrm{RV}_{p+1}$ and
\begin{align}
    \lim_{x\to \infty} \frac{xU(x)}{\int_x^\infty U(t)dt}=-p-1.
\end{align}

(b) If $U$ satisfies
\begin{align}
    \lim_{x\to \infty} \frac{xU(x)}{\int_0^x U(t)dt}=\lambda\in (0,\infty)
\end{align} then $U\in \mathrm{RV}_{\lambda-1}$. If $\int_x^\infty U(t)dt< \infty$ and
\begin{align}
    \lim_{x\to \infty} \frac{xU(x)}{\int_x^\infty U(t)dt}=\lambda\in (0,\infty)
\end{align} then $U\in \mathrm{RV}_{-\lambda-1}$.
\end{Theo}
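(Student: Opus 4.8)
The plan is to deduce both halves of Karamata's Theorem from the uniform convergence property of slowly varying functions (Proposition~\ref{prop:RVresults}, item~1) together with the factorization $U(x)=x^{p}L(x)$, $L\in\mathrm{RV}_0$ (Proposition~\ref{prop:RVresults}, item~2), after which a single change of variables does almost all the work.

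For part~(a) with $p>-1$, write $U(x)=x^pL(x)$ and substitute $t=xs$ to get
\[
\frac{\int_0^x U(t)\,dt}{xU(x)}=\int_0^1 s^{p}\,\frac{L(xs)}{L(x)}\,ds .
\]
By Proposition~\ref{prop:RVresults}, item~1, the ratio $L(xs)/L(x)$ tends to $1$ uniformly for $s$ in compact subsets of $(0,\infty)$; near $s=0$ it is controlled by a Potter-type bound $L(xs)/L(x)\le C s^{-\delta}$ (a standard consequence of slow variation, valid for small $\delta$ and all large $x$), so $s^{p}L(xs)/L(x)\le C s^{p-\delta}$ is integrable on $(0,1]$ once $\delta<p+1$. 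Dominated convergence then yields $\int_0^1 s^p\,ds=1/(p+1)$, which is \eqref{eq:Karamatasequation} after taking reciprocals. Writing $V(x):=\int_0^x U(t)\,dt$, regular variation of $V$ with index $p+1$ follows from
\[
\frac{V(\lambda x)}{V(x)}=\frac{V(\lambda x)}{\lambda x\,U(\lambda x)}\cdot\lambda\,\frac{U(\lambda x)}{U(x)}\cdot\frac{xU(x)}{V(x)}\;\longrightarrow\;\frac{1}{p+1}\cdot\lambda^{p+1}\cdot(p+1)=\lambda^{p+1}.
\]
The case $p<-1$ (and $p=-1$ with $\int^\infty U<\infty$) is identical with $\int_x^\infty U(t)\,dt$ in place of $\int_0^xU(t)\,dt$: the substitution gives $\int_1^\infty s^p L(xs)/L(x)\,ds$, the integrand is dominated (via a Potter bound at infinity) by an integrable multiple of $s^{p}$ because $p<-1$, and dominated convergence produces $\int_1^\infty s^p\,ds=-1/(p+1)$, hence the limit $-p-1$, with $\int_x^\infty U\in\mathrm{RV}_{p+1}$ by the same three-factor identity. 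The borderline $p=-1$ with divergent integral needs its own treatment: for $V(x)=\int_a^x U(t)\,dt$ one checks $\int_x^{\lambda x}U(t)\,dt\sim L(x)\ln\lambda$ while $V(x)/L(x)\to\infty$, so $V(\lambda x)/V(x)\to1$ and $xU(x)/V(x)\to0$.

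For part~(b), put $V(x)=\int_0^x U(t)\,dt$; the hypothesis reads $U(x)/V(x)=\lambda/x+\eta(x)/x$ with $\eta(x)\to0$. Integrating the logarithmic derivative gives $\ln V(x)=\lambda\ln x+\int_{x_0}^x \eta(t)t^{-1}\,dt+C$, so $V(x)=x^{\lambda}\ell(x)$ with $\ell(x)=\exp\bigl(\int_{x_0}^x\eta(t)t^{-1}\,dt+C\bigr)$; since $\eta\to0$ this $\ell$ is slowly varying (it is precisely a Karamata representation), whence $V\in\mathrm{RV}_{\lambda}$. Finally $U(x)=(xU(x)/V(x))\cdot V(x)/x\sim\lambda\,V(x)/x$, and a function asymptotic to a member of $\mathrm{RV}_{\lambda-1}$ is itself in $\mathrm{RV}_{\lambda-1}$, so $U\in\mathrm{RV}_{\lambda-1}$. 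The tail version is the same with $V(x)=\int_x^\infty U(t)\,dt$ and yields $U\in\mathrm{RV}_{-\lambda-1}$.

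I expect the main obstacle to be the domination step in the change-of-variables argument: uniform convergence of $L(xs)/L(x)$ is available only on compact subsets of $(0,\infty)$, so to move the limit inside the integral one must produce a Potter-type bound valid uniformly in $s$ down to $0$ (resp.\ up to $\infty$) and verify it is integrable against $s^{p}$; the exceptional case $p=-1$ with divergent integral, where no power of $s$ dominates, also requires the separate slow-variation estimate indicated above. Everything else is bookkeeping, and all inputs beyond Proposition~\ref{prop:RVresults} are classical facts available from \cite{SIR}.
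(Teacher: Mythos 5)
The paper does not prove this theorem; it is stated in the appendix as a classical result with a pointer to \cite{SIR} (Karamata's Theorem and the Karamata Representation Theorem), so there is no in-paper argument to compare against. Your proof is, in outline, the standard change-of-variables plus dominated-convergence derivation from the regular-variation literature, and the three-factor identity you use to transfer regular variation from the ratio $xU(x)/V(x)$ to $V$ itself is exactly right. Part~(b) via integrating the logarithmic derivative and reading off a Karamata representation is also the standard route and is correct.

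Two small points deserve tightening. First, in the case $p=-1$ with $\int_x^\infty U<\infty$ you fold it in with $p<-1$ and justify the dominating bound ``because $p<-1$''; that justification does not apply at $p=-1$, where $s^{-1}$ is not integrable on $[1,\infty)$ and dominated convergence is unavailable. What you want instead is Fatou's lemma, which gives
\[
\liminf_{x\to\infty}\int_1^\infty s^{-1}\,\frac{L(xs)}{L(x)}\,ds \ \ge\ \int_1^\infty s^{-1}\,ds=\infty,
\]
hence $xU(x)/\int_x^\infty U(t)\,dt\to 0 = -p-1$ as claimed. (Alternatively, the direct estimate you use for the divergent $p=-1$ case also works here: $\int_x^{\lambda x}U\sim L(x)\ln\lambda$, so the tail integral dominates $L(x)=xU(x)$.) Second, the Potter bound on $L(xs)/L(x)$ is valid only when $x$ and $xs$ both exceed some threshold $A_\delta$, so it does not control the integrand all the way down to $s=0$. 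The standard fix is to split $\int_0^1 = \int_0^{A_\delta/x}+\int_{A_\delta/x}^1$: after undoing the substitution the first piece equals $x^{-(p+1)}L(x)^{-1}\int_0^{A_\delta}U(t)\,dt$, which tends to zero because $x^{p+1}L(x)\to\infty$ when $p+1>0$ (item~3 of Proposition~\ref{prop:RVresults}), and dominated convergence applies on the second piece as you describe. You flagged this as the ``main obstacle''; making the truncation explicit closes it. With these two repairs the argument is complete.
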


\begin{proof}[Proof of Lemma \ref{Lem:nontrivialcovering2}]
    The proof uses the same branching construction presented in Subsection ~\ref{subsec:nontrivialdistribution}. Since the proof is analogous to what was done there, some details will not be filled in.  
    
    Fix the tree $\widehat{\mathcal{T}}\subset \mathcal{T}_4$ defined in Subsection \ref{subsec:nontrivialdistribution}. Now, indexed in the vertices $v(i,h)\in \widehat{\mathbb{V}}$, define the regions in $[0,1)\times (0,\infty)$ and intervals in $[0,1)$ as:
    \begin{align*}
        \widetilde{R}(i,h)&=\left[\frac{|i|}{4^h},\frac{|i|+2}{4^{h}}\right)\times \left[\frac{1}{4^{h+1}},\frac{1}{4^h}\right)\\
        \widetilde{I}(i,h)&=\left[\frac{|i|+1}{4^h},\frac{|i|+2}{4^{h}}\right).
    \end{align*} Also, for $v(0,0)$, define $\widetilde{R}(0,0)=[0,1)\times [1/4,\infty)$, and $\widetilde{I}(0,0)=[0,1)$. See Figure \ref{fig:4}, where a representative of this regions in another scale happens. 

To define the Branching process, fix a configuration $\omega_{\alpha}$, and define $(Z_h)_h$, where $Z_0=1$, and associated with it, we have the vertex $v(0,0)$. For other values of $h$, define inductively $Z_{h+1}= \sum_{i=1}^{Z_{h}} 2\cdot \ind\{\omega(\widetilde{R}(|v(i)|,h)=0\}$,
where $\{v(1),...,v(Z_h)\}$ are the vertex associated with the $h-th$ generation of the branching process. Moreover, define the vertex associated with the next generation as the union of the two children in $\widehat{\mathcal{T}_4}$ of each $v\in \{v(i)\}_{i=1}^{Z_h}$ such that $\{\omega(\widetilde{R}(|v|,h)=0\}$ happens.  

As before, using the intervals $\widetilde{I}$ it is simple to show that if $\{Z_h>0\}$ then the event $\{\mathcal{V}(\omega[4^{-h}])\neq~ \emptyset\}$ happens. About probabilities, notice that the process is more homogeneous now, then:
\begin{align*}
    \Pa{\omega(\widetilde{R}(i,h))=0}&=e^{-6\alpha} \text{ for }h>0,\\
    \Pa{\omega(\widetilde{R}(0,0))=0}&=e^{-4\alpha}.
\end{align*} So despite the origin, by classical branching arguments for $\alpha<\frac{\ln(2)}{6}$ the event $\{Z_h>0, \forall h\}$ has positive probability. So clearly, by Lemma \ref{lem:Topology}:
\begin{align*}
    \Pa{\mathcal{V}(\omega)\neq\emptyset}=\Pa{\bigcap_h \left\{\mathcal{V}(\omega[4^{-h}])\neq\emptyset\right\}}>\P{Z_h>0, \forall h}>0.
\end{align*} As desired. 
\end{proof}

\subsection{Proving theorems in the discrete time case}
\label{subsec:Equivalence}

This subsection shows that theorems \ref{teo:1}, \ref{teo:2}, \ref{teo:3}, and \ref{teo:4}, are also true for the discrete covering model. For this, we need to show that the cover time of the problem in the continuous and in the discrete case have the same limit in distribution.

The equivalence of statements is due to the Proposition \ref{Prop:ExVar}, taking the pair $(a_n,b_n)$ as $(f^{-1}(n),0)$ or $(n,\ln{n})$.

\begin{Prop} \label{Prop:ExVar}
Let $a_n,b_n>0$, two sequence of numbers, with $a_n\to \infty$. Then, we have that:
\begin{align*}
   \frac{T_n}{a_n}-b_n\overset{D}{\implies}Y, \text{ if, and only if, }
   \frac{\tau_n}{a_n}-b_n\overset{D}{\implies}Y.
\end{align*}
\end{Prop}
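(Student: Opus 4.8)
The plan is to exploit the relation $T_n = $ (time of the $\tau_n$-th arrival) in the continuous model, where the arrivals come from a rate-$1$ Poisson process $N(t)$, independent of the sequence of objects $(\mathcal{O}_k)_k$. Concretely, if $(S_k)_{k\ge 1}$ denote the arrival times of $N$, then by the definition $X_t = C_{N(t)}$ we have $T_n = S_{\tau_n}$, and $S_{\tau_n} = \sum_{k=1}^{\tau_n}\xi_k$ where $(\xi_k)_k$ are i.i.d.\ $\mathrm{Exp}(1)$ random variables independent of $\tau_n$. So the whole statement reduces to the following general fact about randomly-stopped sums of i.i.d.\ exponentials: if $\tau_n\to\infty$ in probability and $\tau_n/a_n - b_n \Rightarrow Y$ with $a_n\to\infty$, then $S_{\tau_n}/a_n - b_n \Rightarrow Y$ as well, and conversely. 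First I would record that $\tau_n\to\infty$ a.s.\ (indeed $\tau_n \ge$ number of distinct objects needed, which is at least of order $n/\Ex{R}$ when that is finite, and in any case $\tau_n\ge 1$ always and $\tau_n\to\infty$ whenever $a_nb_n$ or the limit forces it; more simply, on the torus of size $n$ one needs at least $\lceil n/\,r_{\max}\rceil$ objects, but cleanly: since $a_n\to\infty$ and $\tau_n/a_n-b_n$ is tight, $\tau_n\to\infty$ in probability unless $b_n\to-\infty$, a degenerate case excluded by $b_n>0$).

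The key step is to control the fluctuation of $S_k - k$ around its mean on the relevant scale. Since $\Var{\xi_1}=1$, by Kolmogorov's inequality (or Doob's $L^2$ maximal inequality applied to the martingale $S_k-k$),
\begin{align*}
    \P{\max_{k\le m}\left|S_k - k\right| > \varepsilon a_n} \le \frac{m}{\varepsilon^2 a_n^2}.
\end{align*}
Now fix $M>0$ and split on the event $\{\tau_n \le M a_n\}$, whose probability can be made close to $1$ uniformly in $n$ by tightness of $\tau_n/a_n - b_n$ (note $b_n>0$, so $\tau_n/a_n \le (\tau_n/a_n - b_n) + b_n$, and one needs $b_n$ bounded; if $b_n$ is unbounded one instead applies the maximal inequality up to $m = (\,$ the tight bound on $\tau_n)$, which is $a_n(b_n + O_P(1))$, and then $m/a_n^2 = (b_n + O_P(1))/a_n \to 0$ provided $b_n/a_n\to 0$ — which holds in both intended applications, $(a_n,b_n)=(f^{-1}(n),0)$ and $(n,\ln n)$). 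On that event, $|S_{\tau_n} - \tau_n| \le \max_{k\le Ma_n}|S_k-k|$, which is $o_P(a_n)$ by the displayed bound since $Ma_n/a_n^2 = M/a_n \to 0$. Therefore $S_{\tau_n}/a_n - \tau_n/a_n \overset{P}{\to} 0$, and Slutsky's theorem gives
\begin{align*}
    \frac{T_n}{a_n} - b_n = \frac{S_{\tau_n}}{a_n} - b_n = \left(\frac{\tau_n}{a_n} - b_n\right) + o_P(1) \overset{D}{\implies} Y.
\end{align*}

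The converse direction is symmetric: from $S_{\tau_n}/a_n - b_n \Rightarrow Y$ one gets $T_n/a_n$ tight, hence (since $T_n \ge S_{\tau_n}$ trivially and $T_n$ and the $\xi_k$ are as above) $\tau_n/a_n$ is tight — here one uses that $\tau_n \le 1 + N(T_n)$ is a Poisson-type count at time $T_n$, so $\tau_n/a_n$ and $T_n/a_n$ are close by the same maximal-inequality argument run in the other order (equivalently, apply the forward argument to the inverse relation $\tau_n = N(T_n)$, using that $N(t)-t$ has the analogous $L^2$ maximal bound). Then the same $o_P(a_n)$ estimate shows $\tau_n/a_n - b_n \overset{P}{=} S_{\tau_n}/a_n - b_n + o_P(1) \Rightarrow Y$.

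The main obstacle is purely bookkeeping: making the truncation level $Ma_n$ (or $a_n b_n(1+o(1))$) interact correctly with the requirement $m/a_n^2\to 0$. This works cleanly precisely because in both intended instantiations $b_n/a_n\to 0$ (for $(n,\ln n)$ this is $\ln n / n \to 0$; for $(f^{-1}(n),0)$ it is immediate). I would therefore either state the proposition under the mild extra hypothesis $b_n/a_n\to 0$, or absorb it by noting that tightness of $\tau_n/a_n-b_n$ together with $a_n\to\infty$ already forces the relevant count to be $O_P(a_n)$ with the needed room. No deeper probabilistic input than the $L^2$ maximal inequality and Slutsky's theorem is required.
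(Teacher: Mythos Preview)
Your approach is correct and follows the same core idea as the paper: write $T_n=S_{\tau_n}=\sum_{k\le\tau_n}\xi_k$ with i.i.d.\ $\mathrm{Exp}(1)$ increments independent of $\tau_n$, show that $(T_n-\tau_n)/a_n\to 0$ in probability via a second-moment bound on $S_k-k$, and conclude by Slutsky. The only difference is in implementation: the paper uses pointwise Chebyshev with the self-scaling threshold $k^{3/4}$ (so $\P{|S_k-k|>k^{3/4}}\le k^{-1/2}$) together with a set-approximation argument, whereas you use Kolmogorov's maximal inequality with threshold $\varepsilon a_n$ and an explicit truncation of $\tau_n$; your flagging of the implicit side condition $b_n/a_n\to 0$ --- needed in both arguments and satisfied in both intended applications $(a_n,b_n)=(n,\ln n)$ and $(f^{-1}(n),0)$ --- is a worthwhile clarification that the paper leaves unsaid.
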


\begin{proof}[Proof of Proposition \ref{Prop:ExVar}]
The proof follows using concentration inequalities for the difference of $\tau_n$ and $T_n$. To start, by the Poisson construction, set
\begin{align*}
    T_n&=\sum_{i=1}^{\tau_n} \eta_i,
\end{align*} where $\eta_i$ is a sequence of i.i.d. exponential random variables with rate $1$. By Chebyshev's inequality, we have
\begin{align}\label{eq:concentrationTt}
    \P{|k-\sum_{i=1}^{k} \eta_i| > k^{3/4}} = \P{|\sum_{i=1}^{k} (\eta_i-1)| > k^{3/4}}\leq k^{-1/2}.
\end{align} Since the cover times $\tau_n$ and $T_n$ are diverging in $n$, we have that condition on the value of $\tau_n$,  $T_n$ will not oscillate $(\tau_n)^{3/4}$ from this value. More than this, by symmetry we also  have that given $T_n$, the value of $\tau_n$ will not oscillate more than $T_n^{3/4}$ of this value.

To control both of this inequalities, we will make standard arguments using any open set $A\subset \R$, define $A_n$ such that:
\begin{align*}
    A_n=\{x\in A: a_n(x+b_n)\left[1+a_n^{-1/4} (x+b_n)^{-1/4}\right]\in A\}.
\end{align*}  We have that $A_n\uparrow A$, and:
\begin{align*}
    \P{T_n\in a_n (A+b_n)}\geq \P{T_n\in a_n (A+b_n),\tau_n\in a_n(A_n+b_n)}.
\end{align*} Then:
\begin{align*}
    \P{T_n\in a_n (A+b_n)}\geq \P{|T_n-\tau_n|< (\tau_n)^{3/4},\tau_n\in a_n(A_n+b_n)}.
\end{align*} Taking the limit, we can use the independence of the exponential variables, together with equation \eqref{eq:concentrationTt}, to show that:
\begin{align*}
    \liminf_{n\to \infty} \P{T_n\in a_n (A+b_n)}\geq \P{Y\in A}.
\end{align*} The other affirmation, have an analogous proof, just substitute $T_n$ by $\tau_n$.
\end{proof}


\bibliographystyle{plain}
\bibliography{ref}

\end{document}